\documentclass{amsproc}
\usepackage{amssymb}
\usepackage{amsthm}
\usepackage{amsfonts}
\usepackage{amsmath}
\usepackage{multirow}
\usepackage{subfigure}
\usepackage{epsfig}
\usepackage{epstopdf}
\usepackage{array}
\usepackage{hyperref}
\usepackage{verbatim}
\usepackage{float}
\usepackage{color}

\setcounter{MaxMatrixCols}{10}

\restylefloat{figure}
\theoremstyle{plain}
\newtheorem{lemma}{Lemma}[section]
\newtheorem{corollary}[lemma]{Corollary}
\newtheorem{definition}[lemma]{Definition}
\newtheorem{example}[lemma]{Example}

\newtheorem{proposition}[lemma]{Proposition}
\newtheorem{remark}[lemma]{Remark}
\newtheorem{theorem}[lemma]{Theorem}
\numberwithin{equation}{section}
\numberwithin{figure}{section}
\DeclareMathOperator{\fS}{\mathfrak{S}}
\DeclareMathOperator{\card}{card}
\DeclareMathOperator{\I}{\mathcal{I}}
\DeclareMathOperator{\J}{\mathcal{J}}
\DeclareMathOperator{\Prob}{\mathbb{P}}
\DeclareMathOperator{\inte}{int}
\DeclareMathOperator{\diam}{diam}
\DeclareMathOperator{\F}{\mathcal{F}}
\DeclareMathOperator{\C}{\mathcal{C}}
\DeclareMathOperator{\BbE}{\mathbb{E}}
\DeclareMathOperator{\BbN}{\mathbb{N}}
\DeclareMathOperator{\cN}{\mathcal{N}}
\DeclareMathOperator{\BbR}{\mathbb{R}}
\DeclareMathOperator{\dimlc}{\dim_{loc}}
\DeclareMathOperator{\dimulc}{\overline{\dim}_{loc}}
\DeclareMathOperator{\dimllc}{\underline{\dim}_{loc}}
\DeclareMathOperator{\supp}{supp}

\DeclareMathOperator{\E}{\mathbb{E}}

\begin{document}
\title[Dimensions of Random measures]{Local dimensions of random homogeneous
self-similar measures: strong separation and finite type}
\author[K.~E.~Hare, K.~G.~Hare, S.~Troscheit]{Kathryn E.~Hare, Kevin G.
Hare, Sascha Troscheit}
\thanks{Research of K.~E.~Hare was supported by NSERC Grant 2016-03719}
\thanks{Research of K.~G.~Hare was supported by NSERC Grant 2014-03154}
\thanks{Research of S.~Troscheit was supported by NSERC Grants 2016-03719,
2014-03154, and the University of Waterloo}
\address{Dept.\ of Pure Mathematics, University of Waterloo, Waterloo, Ont.,
N2L 3G1, Canada}
\email{kehare@uwaterloo.ca}
\address{Dept.\ of Pure Mathematics, University of Waterloo, Waterloo, Ont.,
N2L 3G1, Canada}
\email{kghare@uwaterloo.ca}
\address{Dept.\ of Pure Mathematics, University of Waterloo, Waterloo, Ont.,
N2L 3G1, Canada}
\email{stroscheit@uwaterloo.ca}
\subjclass[2000]{Primary 28A80; secondary 28C10}
\keywords{local dimension, finite type, multifractal analysis, random
iterated function system}

\begin{abstract}
We study the multifractal analysis of self-similar measures arising from
random homogeneous iterated function systems. Under the assumption of the
uniform strong separation condition, we see that this analysis parallels
that of the deterministic case. The overlapping case is more complicated; we
introduce the notion of finite type for random homogeneous iterated function
systems and give a formula for the local dimensions of finite type, regular,
random homogeneous self-similar measures in terms of Lyapunov exponents of
certain transition matrices. We show that almost all points with respect to
this measure are described by a distinguished subset called the essential
class, and that the dimension of the support can be computed almost surely
from knowledge of this essential class. For a special subcase, that we call
commuting, we prove that the set of attainable local dimensions is almost
surely a closed interval. Particular examples of such random measures are
analyzed in more detail.
\end{abstract}

\maketitle

\section{Introduction}

In this paper we study the multifractal analysis of random homogeneous
self-similar measures arising from random homogeneous iterated function
systems (RIFS) under various separation conditions.

For the case of a self-similar measure arising from a single iterated
function system (IFS), satisfying a suitable separation condition, the
multifractal analysis is well understood. The set of local dimensions of the
measure is a closed interval whose endpoints are simple to compute and there
is a formula for the Hausdorff dimensions of the set of points whose local
dimension is a given value. We refer the reader to \cite%
{FalcTech,FractalGeo3} for more details.

If, instead, the IFS has `overlaps', the multifractal analysis is more
poorly understood and can be quite different. For instance, there can be an
isolated point in the set of local dimensions of the associated self-similar
measures; see \cite{HHM,HHN,HL,Sh} for examples. A weaker notion than the
open set condition is the finite type property. This is stronger than the
weak separation condition, but satisfied by many self-similar measures which
fail to possess the open set condition, including Bernoulli convolutions
with contraction factor the reciprocal of a Pisot number and $m$-fold
convolutions of Cantor measures on Cantor sets with ratio of dissection
equal to $1/d$ for an integer $d$ and $m\geq d$; see \cite{HHM,NW}. Building
upon the foundational work of Feng in \cite{F3,F1,F2}, two of the authors,
with various coauthors, in \cite{HHM,HHN,HHS} developed a general machinery
for studying the local dimension theory of self-similar measures of finite
type.

A variant of the notion of a deterministically self-similar set is a random
self-similar set. Such sets arise as the attractor of a random iterated
function system where one begins with a finite collection of iterated
function systems, each consisting of finitely many contractions, and then
proceeds with an iterative construction where the choice of contraction to
use at each step is determined by some random process. When the contractions are
chosen independently for each cylinder in the construction, the process is
called random recursive or $\infty $-variable. If one, instead, chooses the
IFS independently for each level of the construction, but applies it to all
cylinders homogeneously, the process is called random homogeneous or $1$%
-variable for short. The alternative names arise from a construction called
the $V$-variable process that attempts to interpolate between the random
homogeneous and random recursive processes. An appropriate application of
the contraction mapping principle can be used to show there is an invariant
measure whose support is the random attractor, known as a self-similar
measure.

Random recursive sets were first studied independently by Falconer~\cite%
{Falconer86} and Graf~\cite{Graf87}, who determined their almost sure
Hausdorff dimension and measure properties. It is given by the random
analogue of the similarity dimension, the unique exponent such that
expectation of the Hutchinson--Moran sums is one. Olsen~\cite{Olsen94} and
Arbeiter and Patzschke~\cite{Arbeiter96} independently considered the
question of the almost sure multifractal spectrum under a random analogue of
the open set condition. The former considered the general case of
graph-directed random recursive constructions, whereas the latter considered
the standard random recursive model, but under weaker assumptions than
Olsen. Both found that the multifractal spectrum is almost surely the
natural variant of the deterministic case; it is given by the Legendre
transform of an implicitly defined function that satisfies the deterministic
condition on average.

Here we focus on $1$-variable, random iterated function systems and their
associated random homogeneous self-similar measures. These fractals arise by
starting with a finite collection of iterated function systems $\fS_{j}$, $%
j=1,\dots,m$, on $\mathbb{R}^{d}$ and a probability vector which specifies
the likelihood of choosing $\fS_{j}$ at a given level. Consider $\fS%
_{i}(K)=\bigcup_{j}S_{i,j}(K)$ as an operator on the space of compact
subsets of $\BbR^{d}$ with respect to the Hausdorff metric. The associated
random self-similar set or attractor is the set 
\begin{equation*}
K_{\omega }=\bigcap_{k=1}^{\infty }\fS_{\omega _{1}}\circ \fS_{\omega
_{2}}\circ \dots \circ \fS_{\omega _{k}}(K),
\end{equation*}
where $\omega =(\omega _{i})$, $\omega _{i}\in \{1,\dots,m\}$ and $K$ is a
sufficiently large non-empty compact subset of $\mathbb{R}^{d}$. By `large
enough' we mean $\fS_{j}(K)\subseteq K$ for all $j$. Each random set $%
K_{\omega }$ supports a family of random self-similar measures $\mu _{\omega
}$ that are invariant in a suitable sense; see Section \ref{sec:2} for more
details.

Most of the research on these random fractals has been on the dimensional
properties of the random sets such as in \cite{Ha,Roy11,Troscheit17}, or on
the multifractal analysis of special examples, such as the Sierpi\'{n}ski
carpets and sponges in \cite{Fraser11, Olsen11}. As with deterministic
self-similar sets, the box-counting and Hausdorff dimensions coincide (at
least, almost surely) and under certain separation conditions there is a
formula for the dimension in terms of the contraction factors of the
underlying IFS and the probability vector; see (\ref{Hdim}). But unlike the
deterministic case, the Hausdorff measure of $K_{\omega }$ is typically zero.

The goal of this paper is to investigate the local dimensional properties of
the random self-similar measures arising in the random homogeneous case.
First, we study the multifractal analysis for these measures under the
assumption of the uniform strong separation condition. We note that 
\cite{Li12,Wu11} gave
partial results in that direction and our method is similar
to that given in \cite{FalcTech, FractalGeo3} for the deterministic case
under the corresponding assumption. However, we are dealing with a slightly different framework and 
technical complications arise.
Analogously to the deterministic case, the set of attainable local
dimensions is almost surely a closed interval, formulas are given for the
endpoints and the Hausdorff dimensions of the sets where a given local
dimension is attained can be computed from the Legendre transform of a
suitable concave function. This is done in Section \ref{sec:3} of the paper.

In Section \ref{sec:equicontractivefinitetype} we introduce the finite type
condition for this random model under the assumption that all the
similarities in the RIFS have the same contraction factor. We study the
geometric structure of such random attractors, including the notion of the
essential class, a very useful concept in the deterministic case. We prove
that the essential class always exists and is unique, and the Hausdorff
dimension of the attractor can, almost surely, be determined from the
incidence matrices associated with the essential class. When the measure is
`regular', we give a formula for the local dimension of the measure at any
point in terms of Lyapunov exponents of suitable transition matrices, as in
the deterministic case.

Section \ref{sec:5} is devoted to a study of the local dimension theory of a
special class of examples of finite type random IFS and their associated
regular self-similar measures (which do not, in general, satisfy the uniform
strong separation condition). Despite the overlaps, this class has a
`commuting' property that permits us to show that the set of attainable
local dimensions is, again, almost surely an interval for which we give a
formula. The notion of neck levels, discussed in \cite{Jarvenpaa17}, is
useful here, as is Kingman's subadditive ergodic theorem. Particular
examples of such random IFS and their self-similar measures are analyzed in
more detail in Section \ref{sec:6}.

\section{Notation and Definitions}

\label{sec:2}

\subsection{Random iterated function systems}

In this section, we outline the notation that will be used throughout the
remainder of the paper and briefly describe some important properties of
random homogeneous self-similar sets and measures.

Fix $m\in \BbN$. Set 
\begin{equation*}
\mathcal{A}=\{1,\dots,m\}
\end{equation*}
and write $\mathcal{A}^{k}=\{1,\dots ,m\}^{k}$ for all the words from the
alphabet $\mathcal{A}$ of length $k$. Put 
\begin{equation*}
\Omega =\mathcal{A}^{\BbN}=\{1,\dots ,m\}^{\BbN},
\end{equation*}
the set of all infinite sequences with entries in $\mathcal{A}$.

Let $\Prob$ be the Bernoulli probability measure on $\Omega $ giving weight $%
\theta _{j}>0$ to the letter $j$, so $\sum_{j=1}^{m}\theta _{j}=1$. It is
easy to check that $\Prob$ is indeed a measure and for $\omega =(\omega
_{i})_{i=1}^{\infty }\in \Omega $ we have $\Prob\{\omega \in \Omega :\omega
_{k}=j\}=\theta _{j}$ for each $j,k$. Further, $\Prob$ is invariant and
ergodic with respect to the shift map $\pi (\omega )=\pi (\omega _{1},\omega
_{2},\omega _{3},\dots )=(\omega _{2},\omega _{3},\omega _{4},\dots )$.

To each letter $j\in \mathcal{A}$ we associate an iterated function system
(IFS) 
\begin{equation*}
\fS_{j}=\{S_{j,0},S_{j,1},\dots ,S_{j,\cN(j)}\}
\end{equation*}
consisting of finitely many, strictly contracting similitudes on $\BbR^{d}$.
Thus, for each $j,k$ we have contraction factors, $0<r_{j,k}<1$, so that 
\begin{equation*}
\lvert S_{j,k}(x)-S_{j,k}(y)\rvert =r_{j,k}\lvert x-y\rvert .
\end{equation*}
Let $\I_{j}=\{0,1,\dots ,\cN(j)\}$ be the index set for IFS $\fS_{j}$. For
each $\omega \in \Omega $ and integer $n$, let 
\begin{equation*}
\Lambda _{\omega ,n}=\{(\sigma _{1},\dots,\sigma _{n}):\sigma _{j}\in \I%
_{\omega _{j}},j=1,\dots,n\}
\end{equation*}
and $\Lambda _{\omega }=\{(\sigma _{j})_{j=1}^{\infty }:\sigma _{j}\in \I%
_{\omega _{j}}$ for all $j\}$. The elements of $\Lambda _{\omega ,n}$ will
be called codings of length $n$.

Consider the finite family of iterated function systems $\fS=\{\fS%
_{j}:j=1,\dots,m\}$. We call the tuple $(\fS,\Prob)$ (or simply $\fS$) a 
\emph{random homogeneous iterated function system (RIFS)}. Of course, if $%
m=1 $, then the RIFS is simply an IFS.

The finiteness of $\fS$ and $\cN(j)$ ensures that 
\begin{equation*}
r_{\max }=\max_{j,k}r_{j,k}<1\text{ and }r_{\min }=\min_{j,k}r_{j,k}>0.
\end{equation*}

By slight abuse of notation we also consider the $\fS_{j}$ as operators on $%
\mathcal{K}(\BbR^{d})$, the non-empty compact subsets of $\BbR^{d}$, in the
following way. Suppose $K\in \mathcal{K}(\BbR^{d})$ satisfies the property
that $S_{j,k}(K)\subseteq K$. Then define 
\begin{equation*}
\fS_{j}(K)=\bigcup_{k\in \I_{j}}S_{j,k}(K).
\end{equation*}

It is a classical result by Hutchinson~\cite{Hutchinson81} that each IFS $\fS%
_{j}$ has an associated unique invariant compact set $K_{j}$, known as its 
\textit{self-similar set }or \textit{attractor}, satisfying 
\begin{equation*}
\fS_{j}(K_{j})=K_{j}.
\end{equation*}
The uniqueness arises from the fact that $\fS_{j}$ is a contracting map on $(%
\mathcal{K}(\BbR^{d}),d_{H})$, where $d_{H}$ is the Hausdorff metric.
Further, given any sufficiently large $K\in \mathcal{K}(\BbR)$, the set $%
K_{j}$ can be obtained by iterating the map $\fS_{j}$: 
\begin{equation*}
K_{j}=\fS_{j}\circ \fS_{j}\circ \fS_{j}\circ \dots (K)=\bigcap_{k=1}^{\infty
}\underbrace{\fS_{j}\circ \fS_{j}\circ \dots \circ \fS_{j}(K)}_{\text{$k$
times}}.
\end{equation*}

The same idea can be applied to random iterated function systems at every
step in the construction. Indeed, let $(\fS,\Prob)$ be a RIFS as above,
indexed by the letters of $\mathcal{A}$. For each $\omega \in \Omega $ we
define the \emph{random homogeneous self-similar set} or \emph{random
attractor}, $K_{\omega }\in \mathcal{K}(\BbR^{d})$, to be the set obtained
by iterating operators in $\fS$ according to $\omega =(\omega _{1},\omega
_{2},\dots )$,

\begin{equation*}
K_{\omega }=\lim_n \fS_{\omega _{1}}\circ \fS_{\omega _{2}}\circ \dots \circ %
\fS_{\omega _{n}}(K)
\end{equation*}
where the limit is taken in the Hausdorff metric and $K$ is any non-empty
compact subset of $\BbR^{d}$. A suitable application of the contraction
mapping principle shows that this limit exists and is unique. The set $%
K_{\omega}$ also arises as

\begin{equation*}
K_{\omega }=\bigcap_{k=1}^{\infty }\fS_{\omega _{1}}\circ \fS_{\omega
_{2}}\circ \dots \circ \fS_{\omega _{k}}(K),
\end{equation*}
where $K\in \mathcal{K}(\BbR^{d})$ is a sufficiently large compact set.

\begin{example}
\label{randomCantor}Cantor sets with ratios of dissection chosen randomly
from a finite set are examples of such random self-similar sets. Suppose we
are given $0<r_{j}<1/2$, $j=1,\dots,m$, and probability measure $\mathbb{P}$%
. Consider the RIFS with contractions $S_{j,k}(x)=r_{j}x+(1-r_{j})k$, $k=0,1$%
. The self-similar set $K_{\omega }$ is the random Cantor set where if $%
\omega =(\omega _{i})\in (\Omega ,\mathbb{P)}$, then at step $i$ in the
usual Cantor set construction, we remove from each of the parent Cantor
intervals of step $i-1$, the middle open interval, keeping the outer closed
intervals of length $r_{\omega _{1}}\dots r_{\omega _{i}}$. The more general
homogeneous Cantor sets of \cite{F4} could similarly be randomized.
\end{example}

The Hausdorff and box-counting dimensions of this class of examples coincide
almost surely. Further, they coincide almost surely with the unique real
number $s$ satisfying 
\begin{equation}
0=\BbE\left( \log \sum_{k\in \I_{\omega _{1}}}r_{\omega _{1},k}^{s}\right)
=\sum_{j=1}^{m}\theta _{j}\log \sum_{k\in \I_{j}}r_{j,k}^{s},  \notag
\end{equation}
or, equivalently, 
\begin{equation}
\prod_{j=1}^{m}\left( \sum_{k\in \I_{j}}r_{j,k}^{s}\right) ^{\theta _{j}}=1.
\label{Hdim}
\end{equation}
Note that when we write $\mathbb{E}$ we mean the expectation with respect to 
$\mathbb{P}$ unless we specify otherwise.

We remark that, in contrast with the case of a single IFS, the Hausdorff $s$%
-measure of the random attractor $K_{\omega }$ is typically zero for a.a.\ $%
\omega $ when $s$ is the Hausdorff dimension (see \cite%
{Ha,Roy11,Troscheit17b}).

\subsection{Self-similar measures and their local dimensions}

To each letter $j\in \mathcal{A}$, we associate a probability vector on
their respective index set $\I_{j}$. That is, we have probabilities $%
p_{j,k}>0$ such that $\sum_{k\in \I_{j}}p_{j,k}=1$ for each $j$. For each $%
\omega \in \Omega$ there then exists a unique probability measure, supported
on $K_{\omega }$, that satisfies 
\begin{equation}
\mu _{\omega }(E)=\sum_{k\in \I_{\omega _{1}}}p_{\omega _{1},k}\,\mu
_{\omega _{2}\omega _{3}\omega _{4}\dots }\circ {S_{\omega _{1},k}}^{-1}(E)
\label{randommeas}
\end{equation}
for all Borel sets $E$. We refer to $\mu _{\omega }$ as the \emph{random
homogeneous self-similar measure} for $\omega \in \Omega $. As with the
random attractor, the existence and uniqueness of the random self-similar
measure also follows from a suitable application of the contraction mapping
principle.

\begin{example}
\label{Cantormeas}If we take the RIFS from Example \ref{randomCantor}, with
probabilities $p_{j,0}=p$, $p_{j,1}=1-p$ for all $j$, then the associated
random self-similar measure is the $p$-Cantor measure supported on the
random Cantor set $K_{\omega }$.
\end{example}

We choose $\omega $ according to $\Prob$ and aim to study the generic
dimensional properties of these random self-similar measures.

\begin{definition}
Given a probability measure $\mu $, the \textbf{upper local dimension} of $%
\mu $ at $x\in \supp\mu $ is 
\begin{equation*}
\dimulc\mu (x)=\limsup_{\varepsilon \to 0^{+}}\frac{\log \mu (B(x,\epsilon))%
} {\log \varepsilon}.
\end{equation*}
Replacing the $\limsup $ by $\liminf $ gives the \textbf{lower local
dimension}, denoted $\dimllc\mu (x)$. If the limit exists, we call the
number the \textbf{local dimension} of $\mu $ at $x$ and denote this by $%
\dimlc\mu (x)$.
\end{definition}

Given a random homogeneous self-similar measure $\mu _{\omega }$ with
support the self-similar set $K_{\omega }$, let 
\begin{align*}
E_{\omega ,\alpha }& =\{x\in K_{\omega }:\dimlc\mu _{\omega }(x)=\alpha \},
\\
\overline{E_{\omega ,\alpha }}& =\{x\in K_{\omega }:\dimulc\mu _{\omega
}(x)=\alpha \}, \\
\underline{E_{\omega ,\alpha }}& =\{x\in K_{\omega }:\dimllc\mu _{\omega
}(x)=\alpha \}.
\end{align*}
The \emph{multifractal spectrum }of $\mu _{\omega }$ is the function $%
f_{\omega }(\alpha )=\dim _{H}E_{\omega ,\alpha }$.

Henceforth we will omit the adjective `homogeneous', although it will
always be understood.

\section{Multifractal Analysis under Strong Separation}

\label{sec:3}

\subsection{Separation conditions}

In order to obtain meaningful results on the multifractal analysis of random
self-similar measures, we will make use of different separation conditions.
In this section, we will assume the RIFS satisfies the uniform strong
separation condition, a random variant of the strong separation condition
(SSC).

\begin{definition}
Let $\fS=\{S_{1},\dots ,S_{N}\}$ be an IFS of contracting maps with
self-similar set $K$. We say that $\fS$ satisfies the \textbf{strong
separation condition (SSC)} if 
\begin{equation*}
S_{k_{1}}(K)\cap S_{k_{2}}(K)=\emptyset \quad\text{ for all distinct }\quad
k_{1},k_{2}\in \{1,\dots ,N\}.
\end{equation*}

Equivalently, there exists $\epsilon >0$ such that 
\begin{equation*}
\inf_{1\leq k_{1}<k_{2}\leq N}\;\inf_{x,y\in K}\;\lvert
S_{k_{1}}(x)-S_{k_{2}}(y)\rvert \geq \epsilon .
\end{equation*}
\end{definition}

\begin{definition}
Let $\fS$ be a family of iterated function systems of contracting maps, $\{%
\fS_{1},\dots,\fS_{m}\}$. Let $K\in \mathcal{K}(\BbR^{d})$ be the smallest
set (with respect to its diameter) such that $S_{j,k}(K)\subseteq K$ for all 
$S_{j,k}\in \fS_{j}$, $j=1,\dots,m$. We say that $\fS$ satisfies the \textbf{%
uniform strong separation condition (USSC)} if there exists $\epsilon >0$
such that 
\begin{equation*}
\inf_{j}\inf_{\substack{ k_{1},k_{2}\in \I_{j}  \\ k_{1}\neq k_{2}}}
\;\inf_{x,y\in K}\;\lvert S_{j,k_{1}}(x)-S_{j,k_{2}}(y)\rvert \geq \epsilon .
\end{equation*}
\end{definition}

This is also sometimes known as the very strong separation condition.

Clearly, the USSC is satisfied if each $\fS_{j}$, $j=1,\dots,m$, satisfies
the SSC. The random Cantor sets of Example \ref{randomCantor} satisfy the
USSC.

Given $\omega \in \Omega $ and $\sigma =(\sigma _{1},\sigma _{2},\dots
,\sigma _{n})$ with $\sigma _{i}\in \I_{\omega _{i}}$, we let $[\sigma
_{1},\dots ,\sigma _{n}]_{\omega }$ denote the $n$-level $\omega$-cylinder,
that is 
\begin{equation*}
\lbrack \sigma _{1},\dots ,\sigma _{n}]_{\omega }=\{(\tau _{1},\tau
_{2},\dots )\;:\;\tau _{i}\in \mathcal{I}_{\omega _{i}}\text{ for all }i 
\text{, }\tau _{i}=\sigma _{i}\text{ for }1\leq i\leq n\}.
\end{equation*}
Under the assumption of the uniform strong separation condition we can
directly relate these symbolic cylinders to geometric cylinders. Let $K$ be
the compact set arising in the definition of the USSC. By slight abuse of
notation we also consider a cylinder to be the set 
\begin{equation*}
\lbrack \sigma _{1},\dots ,\sigma _{n}]_{\omega }=S_{\omega _{1},\sigma
_{1}}\circ \dots \circ S_{\omega _{n},\sigma _{n}}(K)\cap K_{\omega }.
\end{equation*}
This equivalence follows from the fact that the gaps in the images of $%
S_{j,k}$ do not overlap and so each point $x$ in the attractor $K_{\omega }$
has a unique symbolic encoding $\sigma $. Given such an $x$, we see that $%
x\in \bigcap_{n=1}^{\infty }[\sigma _{1},\dots ,\sigma _{n}]_{\omega }$ and
write 
\begin{equation*}
C(\omega ,x,n)=[\sigma _{1},\dots ,\sigma _{n}]_{\omega }
\end{equation*}
for the unique $n$-level $\omega $-cylinder containing $x$.

Note that 
\begin{equation*}
\mu _{\omega }([\sigma _{1},\dots ,\sigma _{n}]_{\omega })=p_{\omega
_{1},\sigma _{1}}\dots p_{\omega _{n},\sigma _{n}}
\end{equation*}
and the diameter of the cylinder $[\sigma _{1},\dots ,\sigma _{n}]_{\omega }$
is given by 
\begin{equation*}
\diam([\sigma _{1},\dots ,\sigma _{n}]_{\omega })=\,r_{\omega _{1},\sigma
_{1}}\dots r_{\omega _{n},\sigma _{n}}\diam K.
\end{equation*}

\subsection{Multifractal analysis for RIFS satisfying USSC}

We will now consider the multifractal spectrum for random homogeneous
measures satisfying the uniform strong separation condition. We note that we
make no special assumption on the contraction rates other than that they
satisfy $0<r_{j,k}<1$ for all letters $j$ and $k\in \I_{j}$.

In order to state our main result we need to introduce additional notation.
Let 
\begin{equation}
\overline{\alpha }=\max \frac{\log \prod_{j=1}^{m}(p_{j,k_{j}})^{\theta_{j}}%
} {\log \prod_{j=1}^{m}(r_{j,k_{j}})^{\theta_{j}}}\text{ and } \underline{%
\alpha}=\min \frac{\log \prod_{j=1}^{m}(p_{j,k_{j}})^{\theta _{j}}}{\log
\prod_{j=1}^{m}(r_{j,k_j})^{\theta_{j}}} ,  \label{alpha}
\end{equation}
where the maximum (or minimum) is taken over all valid choices of
probabilities $p_{j,k_{j}}$ and contraction factors $r_{j,k_{j}}$.

Given a real number $q$, define $\beta (q)$ by 
\begin{equation}
0=\BbE\left( \log \sum_{k\in \I_{\omega _{1}}}p_{\omega _{1},k}^{q}r_{\omega
_{1},k}^{\beta (q)}\right) =\sum_{j=1}^{m}\theta _{j}\log \sum_{k\in \I%
_{j}}p_{j,k}^{q}r_{j,k}^{\beta (q)},  \notag
\end{equation}
or, equivalently, 
\begin{equation}
\prod_{j=1}^{m}\left( \sum_{k\in \I_{j}}p_{j,k}^{q}r_{j,k}^{\beta
(q)}\right) ^{\theta _{j}}=1.  \label{DefBeta2}
\end{equation}
From (\ref{Hdim}), we see that $\beta (0)$ is the a.s.\ Hausdorff dimension
of $K_{\omega }$.

Differentiating implicitly with respect to $q$ gives 
\begin{equation*}
-\beta ^{\prime }(q)=\frac{\sum_{j=1}^{m}\theta _{j}\sum_{k\in \I%
_{j}}p_{j,k}^{q}r_{j,k}^{\beta (q)}\log p_{j,k}/D_{j}(q)}{%
\sum_{j=1}^{m}\theta _{j}\sum_{k\in \I_{j}}p_{j,k}^{q}r_{j,k}^{\beta
(q)}\log r_{j,k}/D_{j}(q)},
\end{equation*}
where 
\begin{equation}
D_{j}(q)=\sum_{k\in \I_{j}}p_{j,k}^{q}r_{j,k}^{\beta (q)}.  \label{Dq}
\end{equation}

Here is our main result of this section.

\begin{theorem}
\label{thm:main} Let $(\fS,\Prob)$ be a random iterated function system that
satisfies the uniform strong separation condition. Then, for $\Prob$-a.a.\ $%
\omega \in \Omega $, 
\begin{align*}
\lbrack \underline{\alpha },\overline{\alpha }]& =\{\dimlc\mu _{\omega
}(x)\;:\;\;x\in K_{\omega }\} \\
& =\{\dimllc\mu _{\omega }(x)\;:\;\;x\in K_{\omega }\} \\
& =\{\dimulc\mu _{\omega }(x)\;:\;\;x\in K_{\omega }\}.
\end{align*}
If $\alpha \in (\underline{\alpha },\overline{\alpha })$, then there is some 
$q_{\alpha }\in \mathbb{R}$ such that $\alpha =-\beta ^{\prime }(q_{\alpha
}) $ and for a.a.\ $\omega $, 
\begin{align*}
\dim _{H}E_{\omega ,\alpha }=& \inf_{q}(\beta (q)+\alpha q)=\beta (q_{\alpha
})-\beta ^{\prime }(q_{\alpha })q_{\alpha } \\
=& \dim _{H}\underline{E_{\omega ,\alpha }}=\dim _{H}\overline{E_{\omega
,\alpha }}.
\end{align*}
Moreover, the function $\beta $ is convex.
\end{theorem}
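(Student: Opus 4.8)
The plan is to follow the classical multifractal formalism as in \cite{FalcTech, FractalGeo3}, but adapted to the random homogeneous setting, where all the key quantities are replaced by their expectations under $\Prob$ and one exploits Kolmogorov's strong law of large numbers along $\omega$ in place of deterministic counting. First I would record the convexity of $\beta$: differentiating (\ref{DefBeta2}) twice implicitly and using the Cauchy--Schwarz inequality on the measures $p_{j,k}^q r_{j,k}^{\beta(q)}/D_j(q)$ (so that $\sum_k$ becomes a probability average on $\I_j$, and then $\sum_j \theta_j$ a probability average on $\mathcal{A}$) shows $\beta''(q)\geq 0$; this is the usual computation and gives convexity, hence also that $-\beta'$ is non-increasing, so $\{-\beta'(q):q\in\BbR\}$ is an interval, which I would identify with $(\underline\alpha,\overline\alpha)$ using the definitions in (\ref{alpha}) by taking $q\to\pm\infty$ and checking the extremal ratios are attained in the limit.

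Next, for the lower bound on local dimensions, fix $q$ and let $\alpha = -\beta'(q)$. Using (\ref{DefBeta2}) I would define, for $\Prob$-a.a.\ $\omega$, an auxiliary random self-similar measure $\nu_\omega$ on $K_\omega$ that assigns to the cylinder $[\sigma_1,\dots,\sigma_n]_\omega$ the mass $\prod_{i=1}^n p_{\omega_i,\sigma_i}^q r_{\omega_i,\sigma_i}^{\beta(q)}/D_{\omega_i}(q)$; the product normalization works precisely because of the defining relation for $\beta(q)$, though here one must be careful that the normalizing factors depend on $\omega_i$ rather than being constant, which is the first place the random framework genuinely differs from the deterministic one. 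Then on the set $E_{\omega,\alpha}$ I would compare $\mu_\omega(C(\omega,x,n))$ with $\diam(C(\omega,x,n))$ using $\nu_\omega$ as a bridge: the quantities $\frac1n\sum_{i=1}^n \log p_{\omega_i,\sigma_i}$, $\frac1n\sum_{i=1}^n\log r_{\omega_i,\sigma_i}$ and $\frac1n\sum_{i=1}^n\log D_{\omega_i}(q)$ converge (by the strong law of large numbers in $i$, or by Birkhoff's ergodic theorem applied to the shift $\pi$, $\nu_\omega\times\Prob$-a.e.) to $\E$-expectations matching the derivative formula for $-\beta'(q)$; this yields $\dimlc\nu_\omega(x)=\beta(q)+\alpha q$ for $\nu_\omega$-a.e.\ $x$ and, via the standard Billingsley/mass-distribution argument, the bound $\dim_H\underline{E_{\omega,\alpha}}\ge\beta(q)+\alpha q$. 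Because USSC gives the comparability of symbolic and geometric cylinders (diameters are exactly $r_{\omega_1,\sigma_1}\cdots r_{\omega_n,\sigma_n}\diam K$, and disjointness controls $B(x,\varepsilon)$ by a bounded number of cylinders, using $r_{\min}>0$ to pass between scale $\varepsilon$ and cylinder level), these symbolic estimates transfer to genuine local dimensions.

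For the matching upper bound, I would cover $\overline{E_{\omega,\alpha}}$ by cylinders of a fixed level $n$ on which the relevant Birkhoff averages are within $\delta$ of their limits — a set of full $\Prob$-measure for $\omega$ and whose $\nu_\omega$-measure tends to $1$ — and estimate the $s$-dimensional Hausdorff sum $\sum \diam(C)^{s}$ for $s=\beta(q)+\alpha q+\varepsilon$ by bounding it against $\sum \mu_\omega(C)^{q}\diam(C)^{\beta(q)}$ up to controlled errors, which sums to something tending to $0$ by (\ref{DefBeta2}) again; letting $\delta,\varepsilon\to0$ gives $\dim_H\overline{E_{\omega,\alpha}}\le\beta(q)+\alpha q$, so all three sets have the claimed dimension $\inf_q(\beta(q)+\alpha q)$. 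Finally, to handle the full set of attainable local dimensions being exactly $[\underline\alpha,\overline\alpha]$ (closed, including the endpoints) rather than the open interval, I would treat $\underline\alpha,\overline\alpha$ separately by exhibiting, for a.a.\ $\omega$, explicit points $x\in K_\omega$ whose symbolic coding uses at each level $i$ a letter achieving the extremal ratio $\log p_{\omega_i,k}/\log r_{\omega_i,k}$; the strong law then forces $\dimlc\mu_\omega(x)$ to equal the extreme value. The main obstacle I anticipate is the bookkeeping in making the ergodic-theorem/strong-law statements hold simultaneously for $\Prob$-a.a.\ $\omega$ and $\nu_\omega$-a.a.\ $x$ with uniform control of the error terms (a Fubini argument on $\Omega\times\Lambda_\omega$, or working on the skew-product space), together with the scale-adjustment between $\varepsilon$-balls and $n$-cylinders when the contraction ratios vary with $\omega$ — the technical complications the introduction warns about — rather than any conceptual difficulty in the formalism itself.
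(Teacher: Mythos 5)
Your core machinery is the same as the paper's: the auxiliary measures $\nu _{\omega ,q}$ with the $\omega $-dependent normalization $D_{\omega _{i}}(q)$, Birkhoff/strong-law arguments to kill $\frac{1}{n}\log \prod_{i\leq n}D_{\omega _{i}}(q)$, the mass distribution principle, and convexity of $\beta $ by implicit differentiation. Your two deviations are legitimate alternatives: you get the concentration $\nu _{\omega ,q}(E_{\omega ,\alpha })=1$ from the strong law on the skew product $(\omega ,\sigma )$ (the digits are i.i.d.\ under $d\Prob \,d\nu _{\omega ,q}$), where the paper uses a Chebyshev-type moment estimate as in \cite[Prop.~11.4]{FalcTech}; and you prove the upper bound for $\dim _{H}\overline{E_{\omega ,\alpha }}$, $\dim _{H}\underline{E_{\omega ,\alpha }}$ by covering with cylinders and the moment sums $\prod_{i\leq n}D_{\omega _{i}}(q)=e^{o(n)}$, where the paper simply observes that $\dimlc \nu _{\omega ,q}(x)=q\dimlc \mu _{\omega }(x)+\beta (q)$ holds at \emph{every} $x\in K_{\omega }$ (Corollary \ref{vLocDim}), so Billingsley gives both bounds at once; your covering route works but needs the usual split $q\geq 0$ versus $q<0$ and covers at mixed levels, which your sketch glosses over.

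There are, however, two genuine gaps. First, you never prove the containment: that for a.a.\ $\omega $ \emph{every} $x\in K_{\omega }$ has $\dimllc \mu _{\omega }(x)$ and $\dimulc \mu _{\omega }(x)$ in $[\underline{\alpha },\overline{\alpha }]$. This is half of the asserted set equalities and does not follow from your level-set analysis (knowing $\dim _{H}E_{\omega ,\alpha }$ for interior $\alpha $ says nothing about points with dimension outside the interval). In the paper this is the first part of the proof: since $s_{j}(\omega ,n)/n\rightarrow \theta _{j}$ is an event depending only on $\omega $, one null set serves all $x$ simultaneously, and every subsequential limit of $\log \mu _{\omega }(C(\omega ,x,n))/\log |C(\omega ,x,n)|$ becomes a ratio of $\theta _{j}$-weighted convex combinations, which lies in $[\underline{\alpha },\overline{\alpha }]$ because $\sum_{j}\theta _{j}(\log p_{j,\sigma _{j}}-\underline{\alpha }\log r_{j,\sigma _{j}})\leq 0$ for every vector $\sigma $; this is not automatic, as an individual ratio $\log p_{j,k}/\log r_{j,k}$ may well lie outside the interval. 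Second, your endpoint construction is wrong as stated: $\underline{\alpha },\overline{\alpha }$ in \eqref{alpha} are extrema of the aggregated ratio $\log \prod_{j}p_{j,k_{j}}^{\theta _{j}}/\log \prod_{j}r_{j,k_{j}}^{\theta _{j}}$ over \emph{vectors} $(k_{1},\dots ,k_{m})$, and choosing at each step the digit extremizing the individual ratio $\log p_{\omega _{i},k}/\log r_{\omega _{i},k}$ does not in general extremize a ratio of sums when the $r_{j,k}$ vary (already for $m=2$ the coordinatewise choice can yield a value strictly larger than $\underline{\alpha }$). The fix is the paper's: fix one minimizing (resp.\ maximizing) vector $(\sigma _{1},\dots ,\sigma _{m})$ and always apply $S_{j,\sigma _{j}}$ when $\omega _{i}=j$. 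Relatedly, your one-line identification of the limits of $-\beta ^{\prime }(q)$ with $\underline{\alpha },\overline{\alpha }$ hides real work in the random setting: because those limits are ratios of $\theta $-averaged sums rather than a single extremal ratio, the paper needs the product-structure of minimizing vectors (Lemmas \ref{L:AllMin} and \ref{L:min}) to conclude Lemma \ref{RangeBeta}.
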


\begin{example}
Take the random Cantor set and $p$-Cantor measures of Example \ref%
{Cantormeas} with $p\leq 1-p$. Then a.s.\ the set of attainable local
dimensions is the closed interval 
\begin{equation*}
\left[ \frac{\log (1-p)}{\log \prod_{j=1}^{m}r_{j}^{\theta _{j}}}, \frac{%
\log (p)}{\log \prod_{j=1}^{m}r_{j}^{\theta _{j}}}\right] .
\end{equation*}
See \cite{HY} for similar results for the deterministic Cantor measures on
Cantor sets with variable ratios of dissection.
\end{example}

Our proof will closely follow the strategy given in Falconer \cite[Chapter 11%
]{FalcTech} and \cite[Chapter 17]{FractalGeo3} for the deterministic strong
separation case. \vskip0.5em\noindent We begin by proving a number of
technical results. First, we note that to determine local dimensions, we can
work with cylinders rather than balls. This is standard, so the proof is
omitted.

\begin{lemma}
\label{locdim}For all $x\in K_{\omega }$, $\dimlc\mu _{\omega }(x)=\alpha $
if and only if 
\begin{equation*}
\lim_{n\rightarrow \infty }\frac{\log \mu _{\omega }(C(\omega ,x,n))}{\log
\left\vert C(\omega ,x,n)\right\vert }=\alpha .
\end{equation*}%
Similar statements hold for upper/lower local dimensions.
\end{lemma}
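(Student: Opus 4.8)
The plan is to carry out the standard comparison between the $\mu_\omega$-measure of balls and that of cylinders (as in \cite[Chapter~11]{FalcTech} and \cite[Chapter~17]{FractalGeo3}), the only feature requiring real attention being a separation estimate for cylinders supplied by the USSC. Fix $\omega\in\Omega$ and $x\in K_\omega$, let $\sigma=(\sigma_i)_{i\ge1}$ be the unique symbolic coding of $x$, and abbreviate $C_n=C(\omega,x,n)$, $r^{(n)}=r_{\omega_1,\sigma_1}\cdots r_{\omega_n,\sigma_n}$ and $p^{(n)}=p_{\omega_1,\sigma_1}\cdots p_{\omega_n,\sigma_n}$, so that $|C_n|=r^{(n)}\diam K$ and $\mu_\omega(C_n)=p^{(n)}$ by the identities stated just before the lemma. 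Since $0<r_{\min}\le r_{\omega_i,\sigma_i}\le r_{\max}<1$ and the probabilities $p_{j,k}$ lie in a compact subinterval of $(0,1]$, we have $|C_n|\to0$, and the ratios $\log\mu_\omega(C_n)/\log|C_n|$ remain bounded as $n\to\infty$.

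Next I would record two inclusions. Since $x\in C_m$, whenever $|C_m|\le\varepsilon$ we have $C_m\subseteq B(x,\varepsilon)$ and hence $\mu_\omega(B(x,\varepsilon))\ge\mu_\omega(C_m)$. For the opposite direction --- this is where the USSC enters --- let $\eta>0$ be the separation constant of the USSC; I claim $d(x,K_\omega\setminus C_n)\ge\eta\,r^{(n-1)}$. Indeed, $K_\omega\setminus C_n$ is the union of the finitely many $n$-cylinders $[\tau_1,\dots,\tau_n]_\omega$ with $\tau\ne\sigma$, and if such a cylinder first disagrees with $\sigma$ at an index $j\le n$, then it and $C_n$ lie, respectively, in the images of $S_{\omega_j,\tau_j}(K)$ and $S_{\omega_j,\sigma_j}(K)$ under the common similarity $S_{\omega_1,\sigma_1}\circ\cdots\circ S_{\omega_{j-1},\sigma_{j-1}}$, whose ratio is $r_{\omega_1,\sigma_1}\cdots r_{\omega_{j-1},\sigma_{j-1}}\ge r^{(n-1)}$; since $d(S_{\omega_j,\sigma_j}(K),S_{\omega_j,\tau_j}(K))\ge\eta$ by the USSC, the claim follows by scaling and taking the minimum over $\tau$. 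Consequently, if $\varepsilon<\eta\,r^{(n-1)}$ then $B(x,\varepsilon)\cap K_\omega\subseteq C_n$, so (as $\mu_\omega$ is supported on $K_\omega$) $\mu_\omega(B(x,\varepsilon))\le\mu_\omega(C_n)$.

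Now I would match levels to radii. For small $\varepsilon>0$, let $m(\varepsilon)$ be the least integer with $|C_{m(\varepsilon)}|\le\varepsilon$ and $n(\varepsilon)$ the largest integer with $\varepsilon<\eta\,r^{(n(\varepsilon)-1)}$; both tend to $\infty$ as $\varepsilon\to0^+$, and as $\varepsilon$ decreases each of $m(\varepsilon)$ and $n(\varepsilon)$ takes every sufficiently large integer value. The minimality of $m(\varepsilon)$, the maximality of $n(\varepsilon)$, and the bounds $r_{\min}\le r_{\omega_i,\sigma_i}\le r_{\max}$ give $\log\varepsilon=\log|C_{m(\varepsilon)}|+O(1)=\log|C_{n(\varepsilon)}|+O(1)$ with an absolute $O(1)$. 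Inserting the two inclusions of the previous paragraph, dividing by $\log\varepsilon<0$, and using the boundedness of $\log\mu_\omega(C_k)/\log|C_k|$ to absorb the $O(1)$ shift in the denominator into an $o(1)$ term, we obtain
\begin{equation*}
\frac{\log\mu_\omega(C_{n(\varepsilon)})}{\log|C_{n(\varepsilon)}|}+o(1)\ \le\ \frac{\log\mu_\omega(B(x,\varepsilon))}{\log\varepsilon}\ \le\ \frac{\log\mu_\omega(C_{m(\varepsilon)})}{\log|C_{m(\varepsilon)}|}+o(1)
\end{equation*}
as $\varepsilon\to0^+$.

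Finally, taking $\liminf$ and $\limsup$ through this chain, and using that $m(\varepsilon)$ and $n(\varepsilon)$ run through all large integers (so that along suitable sequences of radii the outer quantities converge to $\liminf_n$ and $\limsup_n$ of $\log\mu_\omega(C_n)/\log|C_n|$), one concludes $\dimllc\mu_\omega(x)=\liminf_{n\to\infty}\log\mu_\omega(C_n)/\log|C_n|$ and $\dimulc\mu_\omega(x)=\limsup_{n\to\infty}\log\mu_\omega(C_n)/\log|C_n|$. The stated equivalence for $\dimlc$, and the analogues for the upper and lower local dimensions, follow immediately. The only step that is not routine bookkeeping is the separation estimate $d(x,K_\omega\setminus C_n)\ge\eta\,r^{(n-1)}$, resting on the USSC together with the nested structure of cylinders; I expect that to be the main point to get right.
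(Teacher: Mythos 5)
Your argument is correct and is essentially the proof the paper has in mind (and omits as standard): the same ball--cylinder comparison, with your USSC-based separation estimate $d(x,K_{\omega }\setminus C(\omega ,x,n))\geq \eta \,r_{\omega _{1},\sigma _{1}}\cdots r_{\omega _{n-1},\sigma _{n-1}}$ playing the role of the paper's bound on the gaps adjacent to a cylinder, followed by the same matching of levels to radii using the uniform bounds $r_{\min }$, $r_{\max }$ and the finitely many probabilities. No changes are needed.
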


Similar to \cite{FalcTech}, for each $\omega \in \Omega $ and real number $q$
we define an auxiliary random probability measure $\nu _{\omega ,q}$ by 
\begin{equation*}
\nu _{\omega ,q}([\sigma _{1},\dots ,\sigma _{n}]_{\omega })=\frac{%
\prod_{i=1}^{n}p_{\omega _{i},\sigma _{i}}^{q}r_{\omega _{i},\sigma
_{i}}^{\beta (q)}}{\prod_{i=1}^{n}D_{\omega _{i}}(q)}
\end{equation*}%
where $D_{\omega _{i}}(q)$ is defined in (\ref{Dq}). We leave the
verification that this defines a measure to the reader.

This measure is useful because 
\begin{align*}
\frac{\log \nu _{\omega ,q}([\sigma _{1},\dots ,\sigma _{n}])}{\log
\left\vert [\sigma _{1},\dots ,\sigma _{n}]\right\vert }& =\frac{q\log
\prod_{i=1}^{n}p_{\omega _{i},\sigma _{i}}+\beta (q)\log
\prod_{i=1}^{n}r_{\omega _{i},\sigma _{i}}-\log \prod_{i=1}^{n}D_{\omega
_{i}}(q)}{\log \prod_{i=1}^{n}r_{\omega _{i},\sigma _{i}}} \\
& =\frac{q\log \mu _{\omega }([\sigma _{1},\dots ,\sigma _{n}])}{\log
\left\vert [\sigma _{1},\dots ,\sigma _{n}]\right\vert }+\beta (q)-\frac{%
\log \prod_{i=1}^{n}D_{\omega _{i}}(q)}{\log \prod_{i=1}^{n}r_{\omega
_{i},\sigma _{i}}}.
\end{align*}

Our interest is in studying the behaviour of this expression as $%
n\rightarrow \infty $. Observe that Birkhoff's ergodic theorem (B.E.T.) and
the definition of $\beta (q)$ implies that for a.a.\ $\omega $, 
\begin{equation*}
\lim_{n\rightarrow \infty }\frac{1}{n}\log \prod_{i=1}^{n}D_{\omega
_{i}}(q)=\lim_{n\rightarrow \infty }\frac{1}{n}\sum_{i=1}^{n}\log
\sum_{\sigma _{i}\in \I_{\omega _{i}}}p_{\omega _{i},\sigma
_{i}}^{q}r_{\omega _{i},\sigma _{i}}^{\beta (q)}=\BbE\left( \log
\sum_{\sigma _{i}\in \I_{\omega _{i}}}p_{\omega _{i},\sigma
_{i}}^{q}r_{\omega _{i},\sigma _{i}}^{\beta (q)}\right) =0.
\end{equation*}

Since it is also the case that $\left\vert \log \prod_{i=1}^{n}r_{\omega
_{i},\sigma _{i}}\right\vert \leq n\left\vert \log r_{\min }\right\vert $,
it follows that 
\begin{equation*}
\lim_{n}\frac{\log \prod_{i=1}^{n}D_{\omega _{i}}(q)}{\log
\prod_{i=1}^{n}r_{\omega _{i},\sigma _{i}}}=0\text{.}
\end{equation*}

These comments show that

\begin{lemma}
For a.a.\ $\omega $, 
\begin{equation*}
\lim_{n\rightarrow \infty }\frac{\log \nu _{\omega ,q}([\sigma _{1},\dots
,\sigma _{n}])}{\log \left\vert [\sigma _{1},\dots ,\sigma _{n}]\right\vert }
=q\lim_{n}\frac{\log \mu _{\omega }([\sigma _{1},\dots ,\sigma _{n}])}{\log
\left\vert [\sigma _{1},\dots ,\sigma _{n}]\right\vert }+\beta (q)
\end{equation*}
and similarly for $\limsup $ and $\liminf $.
\end{lemma}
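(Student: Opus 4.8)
The plan is to read the lemma straight off the identity already displayed above, namely
\[
\frac{\log \nu _{\omega ,q}([\sigma _{1},\dots ,\sigma _{n}])}{\log \left\vert [\sigma _{1},\dots ,\sigma _{n}]\right\vert }=\frac{q\log \mu _{\omega }([\sigma _{1},\dots ,\sigma _{n}])}{\log \left\vert [\sigma _{1},\dots ,\sigma _{n}]\right\vert }+\beta (q)-\frac{\log \prod_{i=1}^{n}D_{\omega _{i}}(q)}{\log \prod_{i=1}^{n}r_{\omega _{i},\sigma _{i}}},
\]
so that everything reduces to showing that the last term, call it $E_{n}(\omega ,\sigma )$, tends to $0$ as $n\to \infty $ for $\Prob$-a.a.\ $\omega $, and in fact uniformly over all codings $\sigma \in \Lambda _{\omega }$. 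The uniformity costs nothing here and will matter later, when this estimate is applied to every $x\in K_{\omega }$ at once.

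For the numerator of $E_n$, I would note that $\omega \mapsto \log D_{\omega _{1}}(q)=\log \sum_{k\in \I_{\omega _{1}}}p_{\omega _{1},k}^{q}r_{\omega _{1},k}^{\beta (q)}$ depends only on the first coordinate of $\omega $; since $\mathcal{A}$ is finite and all the $p_{j,k}$ and $r_{j,k}$ lie strictly between $0$ and $1$, this function is bounded, hence lies in $L^{1}(\Prob)$. As $\Prob$ is shift-ergodic and $\log D_{\omega _{i}}(q)=\log D_{\omega _{1}}(q)\circ \pi ^{i-1}$, Birkhoff's ergodic theorem gives, for a.a.\ $\omega $,
\[
\frac{1}{n}\log \prod_{i=1}^{n}D_{\omega _{i}}(q)=\frac{1}{n}\sum_{i=1}^{n}\log D_{\omega _{i}}(q)\ \longrightarrow \ \BbE\Big(\log \sum_{k\in \I_{\omega _{1}}}p_{\omega _{1},k}^{q}r_{\omega _{1},k}^{\beta (q)}\Big)=0,
\]
the final equality being the defining relation (\ref{DefBeta2}) of $\beta (q)$; thus $\log \prod_{i=1}^{n}D_{\omega _{i}}(q)=o(n)$ almost surely. (The exceptional null set depends on $q$, but $q$ is fixed throughout.) For the denominator, since $r_{\omega _{i},\sigma _{i}}\leq r_{\max }<1$ for every $i$ and every coding, $\big|\log \prod_{i=1}^{n}r_{\omega _{i},\sigma _{i}}\big|=\sum_{i=1}^{n}\lvert \log r_{\omega _{i},\sigma _{i}}\rvert \geq n\lvert \log r_{\max }\rvert >0$, a lower bound independent of both $\omega $ and $\sigma $. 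Dividing, $\lvert E_{n}(\omega ,\sigma )\rvert \leq \lvert \log \prod_{i=1}^{n}D_{\omega _{i}}(q)\rvert /(n\lvert \log r_{\max }\rvert )\to 0$ for a.a.\ $\omega $, uniformly in $\sigma $.

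Substituting back into the identity yields the stated limit formula whenever $\log \mu _{\omega }(C(\omega ,x,n))/\log \lvert C(\omega ,x,n)\rvert $ converges. Since $E_{n}\to 0$, passing to $\limsup $ and $\liminf $ is immediate: for $q\geq 0$ the $\limsup $ (resp.\ $\liminf $) of the left-hand ratio equals $q$ times the $\limsup $ (resp.\ $\liminf $) of the $\mu _{\omega }$-ratio plus $\beta (q)$, while for $q<0$ the roles of $\limsup $ and $\liminf $ on the right are exchanged. There is no genuine obstacle in this lemma; the only points that deserve a moment's care are confirming $\log D_{\omega _{1}}(q)\in L^{1}(\Prob)$ so that the ergodic theorem applies, and extracting the lower bound on $\lvert \log \prod r_{\omega _{i},\sigma _{i}}\rvert $ uniformly in the coding, which is precisely what upgrades the convergence of $E_n$ from ``for each fixed $x$'' to ``for all $x\in K_{\omega }$ simultaneously.''
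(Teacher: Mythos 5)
Your proof is correct and follows essentially the same route as the paper: the displayed identity for the ratio, Birkhoff's ergodic theorem together with the defining relation (\ref{DefBeta2}) to show $\frac{1}{n}\log\prod_{i=1}^{n}D_{\omega_{i}}(q)\to 0$ a.s., and a linear-in-$n$ lower bound on $\bigl\vert\log\prod_{i=1}^{n}r_{\omega_{i},\sigma_{i}}\bigr\vert$ uniform in the coding. If anything you are slightly more careful than the paper on two small points: you use the correct lower bound $n\lvert\log r_{\max}\rvert$ for the denominator (the paper's text cites the upper bound $n\lvert\log r_{\min}\rvert$, which is the inessential direction), and you note the exchange of $\limsup$ and $\liminf$ when $q<0$, which the paper leaves implicit.
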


A similar statement to Lemma \ref{locdim} also holds for $\nu _{\omega
,\beta}$ and this proves

\begin{corollary}
\label{vLocDim}For a.a.\ $\omega $ and every real number $q$, 
\begin{equation*}
\dimlc\nu _{\omega ,q}(x)=q\dimlc\mu _{\omega }(x)+\beta (q)\text{ for }x\in
K_{\omega }.
\end{equation*}
Analogous statements hold for the upper and lower local dimensions.
\end{corollary}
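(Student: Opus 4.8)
The plan is to combine two ingredients already at hand. The first is the limit identity of the previous lemma, relating, for a.a.\ $\omega$ and each fixed $q$, the quantity $\log\nu_{\omega,q}([\sigma_1,\dots,\sigma_n])/\log|[\sigma_1,\dots,\sigma_n]|$ to the corresponding ratio for $\mu_\omega$ plus $\beta(q)$, with the same statement for $\limsup$ and $\liminf$. The second is a cylinder-to-ball reduction for $\nu_{\omega,q}$ exactly analogous to Lemma \ref{locdim}. I would observe that the proof of Lemma \ref{locdim} uses only the USSC geometry of $K_\omega$ --- a uniform lower bound, proportional to $|C(\omega,x,n)|$, on the gaps adjacent to each $n$-cylinder --- together with bounded ratios of consecutive cylinder masses and a bounded contraction rate of diameters. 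Since $\nu_{\omega,q}$ assigns each cylinder strictly positive mass with consecutive ratio $p_{\omega_{n+1},\sigma_{n+1}}^{q}r_{\omega_{n+1},\sigma_{n+1}}^{\beta(q)}/D_{\omega_{n+1}}(q)$, which for a fixed $q$ is bounded away from $0$ and $\infty$ over the finite letter set, that proof carries over verbatim. Hence, for a.a.\ $\omega$ and each fixed $q$, $\dimlc\nu_{\omega,q}(x)$ equals $\lim_{n}\log\nu_{\omega,q}(C(\omega,x,n))/\log|C(\omega,x,n)|$ whenever this limit exists, and similarly for the upper and lower local dimensions; feeding in the previous lemma yields the asserted identities pointwise on $K_\omega$ for that $q$.

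Next I would promote this to a single full-measure set $\Omega_0$ that works for all $q\in\BbR$ simultaneously. The only place $\omega$ enters is through the claim $n^{-1}\log\prod_{i=1}^{n}D_{\omega_i}(q)\to 0$, and there the randomness is confined to the empirical letter frequencies: since $\omega_i$ ranges over the finite alphabet $\{1,\dots,m\}$, we have $D_{\omega_i}(q)\in\{D_1(q),\dots,D_m(q)\}$ and
\[
\frac1n\log\prod_{i=1}^{n}D_{\omega_i}(q)=\sum_{j=1}^{m}\frac{\#\{i\le n:\omega_i=j\}}{n}\,\log D_j(q).
\]
By the strong law of large numbers --- equivalently, Birkhoff's theorem for the finitely many functions $\mathbf{1}_{\{\omega_1=j\}}$ --- there is one set $\Omega_0$ with $\Prob(\Omega_0)=1$, not depending on $q$, on which $n^{-1}\#\{i\le n:\omega_i=j\}\to\theta_j$ for every $j$. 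On $\Omega_0$ the right-hand side converges to $\sum_{j=1}^{m}\theta_j\log D_j(q)$, which is $0$ for every $q$ by (\ref{DefBeta2}). Combined with the coding-independent bound $n|\log r_{\max}|\le|\log\prod_{i=1}^{n}r_{\omega_i,\sigma_i}|\le n|\log r_{\min}|$, this forces the correction term $\log\prod_{i=1}^{n}D_{\omega_i}(q)/\log\prod_{i=1}^{n}r_{\omega_i,\sigma_i}$ to vanish in the limit, uniformly over codings, for every $q$. So the previous lemma, and with it the reduction of the first paragraph, holds on this fixed $\Omega_0$ for every $q$ at once.

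I do not expect a genuine obstacle: the argument is bookkeeping on top of results already proved. The one point worth emphasising is the interchange of quantifiers in the second paragraph --- that over a finite alphabet the relevant almost-sure convergence is governed entirely by letter frequencies and is therefore automatically uniform in the continuous parameter $q$ --- together with the routine check that the separation geometry used in Lemma \ref{locdim} is a property of $K_\omega$ itself, hence transfers unchanged to the auxiliary measures $\nu_{\omega,q}$.
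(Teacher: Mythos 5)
Your proposal is correct and follows essentially the same route as the paper, which likewise deduces the corollary by combining the preceding lemma with the observation that the cylinder-to-ball reduction of Lemma \ref{locdim} applies equally to $\nu_{\omega,q}$ (the paper states this in one line). Your extra care in showing that the exceptional null set can be chosen independently of $q$ --- because the Birkhoff/SLLN input reduces to letter frequencies over the finite alphabet --- is a correct and worthwhile filling-in of a detail the paper leaves implicit.
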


The next step is to show that $\nu _{\omega ,q}$ is concentrated on $%
E_{\omega ,\alpha }$ for a.a. $\omega $, where $\alpha =-\beta ^{\prime }(q)$%
. We will then apply the \emph{mass distribution principle} to determine $%
\dim _{H}E_{\omega ,\alpha }$.

\begin{proposition}
\label{AuxMeas}Let $q\in \BbR$ and $\alpha =-\beta ^{\prime }(q)$. Then $\nu
_{\omega ,q}(E_{\omega ,\alpha })=1$ for a.a.~$\omega $.
\end{proposition}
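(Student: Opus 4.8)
The plan is to show that for $\nu_{\omega,q}$-a.a.\ $x$, the quantity $\log\mu_\omega(C(\omega,x,n))/\log|C(\omega,x,n)|$ converges to $\alpha=-\beta'(q)$; combined with Lemma \ref{locdim}, this gives $\dim_{loc}\mu_\omega(x)=\alpha$ for $\nu_{\omega,q}$-a.a.\ $x$, hence $\nu_{\omega,q}(E_{\omega,\alpha})=1$. First I would write
\[
\frac{\log\mu_\omega(C(\omega,x,n))}{\log|C(\omega,x,n)|}=\frac{\sum_{i=1}^n\log p_{\omega_i,\sigma_i}}{\log\diam K+\sum_{i=1}^n\log r_{\omega_i,\sigma_i}},
\]
where $\sigma=(\sigma_i)$ is the symbolic coding of $x$. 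The idea is to apply a law of large numbers to the numerator and denominator separately, where the underlying probability space is the product of $(\Omega,\Prob)$ with the symbolic space carrying the measure $\nu_{\omega,q}$ in the second coordinate. Concretely, consider the skew-product space $\{(\omega,\sigma):\omega\in\Omega,\ \sigma\in\Lambda_\omega\}$ with the measure $d\Prob(\omega)\,d\nu_{\omega,q}(\sigma)$, on which the shift $(\omega,\sigma)\mapsto(\pi\omega,\pi\sigma)$ acts; one checks this measure is shift-invariant (using the self-similar relation $\nu_{\omega,q}=\sum_k \frac{p_{\omega_1,k}^q r_{\omega_1,k}^{\beta(q)}}{D_{\omega_1}(q)}\nu_{\pi\omega,q}\circ S_{\omega_1,k}^{-1}$, which follows from the defining formula for $\nu_{\omega,q}$) and ergodic. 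Then Birkhoff's ergodic theorem applied to the functions $(\omega,\sigma)\mapsto \log p_{\omega_1,\sigma_1}$ and $(\omega,\sigma)\mapsto\log r_{\omega_1,\sigma_1}$ gives, for a.a.\ $(\omega,\sigma)$,
\[
\frac1n\sum_{i=1}^n\log p_{\omega_i,\sigma_i}\to \sum_{j=1}^m\theta_j\sum_{k\in\I_j}\frac{p_{j,k}^q r_{j,k}^{\beta(q)}}{D_j(q)}\log p_{j,k},\qquad
\frac1n\sum_{i=1}^n\log r_{\omega_i,\sigma_i}\to\sum_{j=1}^m\theta_j\sum_{k\in\I_j}\frac{p_{j,k}^q r_{j,k}^{\beta(q)}}{D_j(q)}\log r_{j,k}.
\]
The ratio of these two limits is precisely $-\beta'(q)$ by the implicit-differentiation formula recorded just before Theorem \ref{thm:main}, since $D_j(q)=\sum_{k\in\I_j}p_{j,k}^q r_{j,k}^{\beta(q)}$. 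The denominator limit is nonzero because $\log r_{j,k}<0$ for all $j,k$, so dividing numerator by denominator is legitimate, and the $\log\diam K$ term is negligible after dividing by $n$.

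The remaining step is a Fubini argument: the set $G=\{(\omega,\sigma):\lim_n \log\mu_\omega([\sigma|_n])/\log|[\sigma|_n]|=\alpha\}$ has full measure for $d\Prob\,d\nu_{\omega,q}$, so for $\Prob$-a.a.\ $\omega$ the slice $G_\omega=\{\sigma:(\omega,\sigma)\in G\}$ has $\nu_{\omega,q}(G_\omega)=1$; identifying $\sigma$ with the point $x\in K_\omega$ it codes (this identification is the content of the USSC, as discussed before Lemma \ref{locdim}) yields $\nu_{\omega,q}(E_{\omega,\alpha})=1$.

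The main obstacle I anticipate is setting up the skew-product carefully: verifying that $d\Prob(\omega)\,d\nu_{\omega,q}(\sigma)$ is genuinely shift-invariant and ergodic requires checking the disintegration/self-similarity identity for $\nu_{\omega,q}$ under the shift and then invoking ergodicity of the base together with the (deterministic-in-$\omega$) structure of the fibre measures — essentially a random version of the standard fact that the $q$-th auxiliary measure on a self-similar set is shift-ergodic. An alternative, slightly more hands-on route that avoids naming the skew-product explicitly is to apply Birkhoff directly on $(\Omega,\Prob)$ to control $\frac1n\log\prod D_{\omega_i}(q)$ (already done in the excerpt) and then, for fixed $\omega$, use the strong law of large numbers for the independent-but-not-identically-distributed choices $\sigma_i$ with $\nu_{\omega,q}$-distribution conditional on $\omega$, checking the variance sum $\sum_i i^{-2}\mathrm{Var}(\log p_{\omega_i,\sigma_i})<\infty$ (which is immediate since there are finitely many bounded values) to get a.s.\ convergence of the Cesàro averages along each $\omega$; combining with the B.E.T.\ statement for the base gives the claim. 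Either way the computation of the limiting ratio as $-\beta'(q)$ is the routine part already laid out in the text.
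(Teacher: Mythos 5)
Your proposal is correct, but it proves the proposition by a genuinely different route than the paper. The paper follows Falconer's deterministic template (Proposition 11.4 of \cite{FalcTech}): it bounds $\nu _{\omega ,q}\{x:\mu _{\omega }(C(\omega ,n,x))\geq |C(\omega ,n,x)|^{\alpha -\varepsilon }\}$ by a $\delta$-th moment, expresses the bound as a ratio of products of the sums $\sum_k p_{j,k}^{q+\delta}r_{j,k}^{\beta(q)+(\varepsilon-\alpha)\delta}$, controls these products via Birkhoff's ergodic theorem applied to $\Phi(q+\delta,\beta(q)+(\varepsilon-\alpha)\delta)$, uses a Taylor expansion of $\beta$ together with monotonicity of $\Phi$ in the second variable to make this quantity strictly negative, and then sums the resulting exponential bounds (a Borel--Cantelli step) to pin the liminf and limsup of the cylinder ratio between $\alpha-\varepsilon$ and $\alpha+\varepsilon$. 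You instead exploit the fact that, conditionally on $\omega$, $\nu_{\omega,q}$ is a product measure on the coding space with digit distribution $b_{\omega_i,k}(q)=p_{\omega_i,k}^{q}r_{\omega_i,k}^{\beta(q)}/D_{\omega_i}(q)$, apply a strong law (or a fibred Birkhoff theorem) to the Ces\`aro averages of $\log p_{\omega_i,\sigma_i}$ and $\log r_{\omega_i,\sigma_i}$, and identify the limiting ratio with $-\beta'(q)$ via the implicit-differentiation formula recorded before Theorem \ref{thm:main}; Lemma \ref{locdim} and the USSC coding then convert this into $\nu_{\omega,q}(E_{\omega,\alpha})=1$. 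Both arguments are sound. The paper's route yields quantitative (exponentially decaying) deviation bounds and stays parallel to the deterministic treatment; yours is more direct and dispenses with the Taylor and Borel--Cantelli steps. The obstacle you anticipate is smaller than you fear: the annealed measure $d\Prob(\omega)\,d\nu_{\omega,q}(\sigma)$ is precisely the i.i.d.\ (Bernoulli) measure on the pair alphabet $\{(j,k):j\in\mathcal{A},\,k\in\I_j\}$ giving weight $\theta_j b_{j,k}(q)$ to $(j,k)$, so shift-invariance and ergodicity are immediate and even the skew-product language is optional; alternatively your second route (Birkhoff on the base for the $\omega$-dependent means, then Kolmogorov's SLLN for the independent bounded digits at each fixed good $\omega$) avoids Fubini and measurability questions entirely, and the order of quantifiers comes out right because the full-measure set of good $\omega$ does not depend on $\sigma$.
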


\begin{proof}
Fix $\varepsilon ,\delta >0$. As in \cite[Proposition 11.4]{FalcTech},%
\begin{equation*}
\nu _{\omega ,q}\{x:\mu _{\omega }(C(\omega ,n,x))\geq \left\vert C(\omega
,n,x)\right\vert ^{\alpha -\varepsilon }\}=\frac{\prod_{i=1}^{n}\sum_{\sigma
_{i}\in \I_{\omega _{i}}}(p_{\omega _{i},\sigma _{i}})^{\delta +q}(r_{\omega
_{i},\sigma _{i}})^{\delta (\varepsilon -\alpha )+\beta (q)}}{%
\prod_{i=1}^{n}\sum_{\sigma _{i}\in \I_{\omega _{i}}}(p_{\omega _{i},\sigma
_{i}})^{q}(r_{\omega _{i},\sigma _{i}})^{\beta (q)}}.
\end{equation*}

For $q,\zeta \in \BbR$, put 
\begin{equation*}
\Phi (q,\zeta )=\sum_{j=1}^{m}\theta _{j}\log \sum_{k\in \I%
_{j}}p_{j,k}^{q}r_{j,k}^{\zeta }.
\end{equation*}%
Choose sequences $(\varepsilon _{n}),(\delta _{n})$ tending to $0$. By the
B.E.T., for each $n,\ell $ 
\begin{equation*}
\Phi (q+\delta _{n},\beta (q)+(\varepsilon _{\ell }-\alpha )\delta
_{n})=\lim_{N\rightarrow \infty }\frac{1}{N}\log \prod_{i=1}^{N}\sum_{\sigma
_{i}\in \I_{\omega _{i}}}(p_{\omega _{i},\sigma _{i}})^{\delta
_{n}+q}(r_{\omega _{i},\sigma _{i}})^{\delta _{n}(\varepsilon _{\ell
}-\alpha )+\beta (q)}
\end{equation*}%
for a.a.\ $\omega $. Let $\Omega _{0}$ be the intersection of these
countably many sets of full measure, together with the full measure set, $%
\{\omega :\lim_{N}\frac{1}{N}\prod_{i=1}^{N}\log D_{\omega _{i}}(q)=0\}$.
Then $\Omega _{0}$ is also of full measure.

Since $\alpha =-\beta ^{\prime }(q)$ and $\Phi $ is strictly decreasing in
the second variable, a Taylor series argument shows that for $\delta =\delta
(\varepsilon )>0$ sufficiently small we have

\begin{align}
\Phi (q+\delta ,\beta (q)+(\varepsilon -\alpha )\delta )& <0  \label{Claim1}
\\
\Phi (q-\delta ,\beta (q)+(\varepsilon +\alpha )\delta )& <0.  \label{Claim}
\end{align}

Given $\varepsilon =\varepsilon _{\ell }$, choose $\delta _{n}=\delta $
sufficiently small so that the relation (\ref{Claim1}) applies. Put $A=-\Phi
(q+\delta ,\beta (q)+(\varepsilon -\alpha )\delta )/2>0$. For $\omega \in
\Omega _{0}$ and large enough $n$, say $n\geq n_{\omega }$, 
\begin{equation*}
\log \prod_{i=1}^{n}\sum_{\sigma _{i}\in \I_{\omega _{i}}}(p_{\omega
_{i},\sigma _{i}})^{\delta _{n}+q}(r_{\omega _{i},\sigma _{i}})^{\delta
_{n}(\varepsilon -\alpha )+\beta (q)}\leq -nA
\end{equation*}%
and 
\begin{equation*}
\log \prod_{i=1}^{n}\sum_{\sigma _{i}\in \I_{\omega _{i}}}p_{\omega
_{i},\sigma _{i}}^{q}r_{\omega _{i},\sigma _{i}}^{\beta (q)}\leq nA/2.
\end{equation*}%
Thus, for all $\omega \in \Omega _{0}$ we have 
\begin{equation*}
\nu _{\omega ,q}\{x:\mu _{\omega }(C(\omega ,n,x))\geq \left\vert C(\omega
,n,x)\right\vert ^{\alpha -\varepsilon }\}\leq \exp (-nA/2)\text{ for each }%
n\geq n_{\omega }.
\end{equation*}%
Therefore 
\begin{equation*}
\nu _{\omega ,q}\{x:\mu (C(\omega ,n,x))\geq \left\vert C(\omega
,n,x)\right\vert ^{\alpha -\varepsilon }\text{ for some }n\geq n_{\omega
}\}\leq \sum_{n=n_{\omega }}^{\infty }\exp (-nA/2)
\end{equation*}%
and this tends to zero as $n_{\omega }\rightarrow \infty $. Hence, for all $%
\omega \in \Omega _{0}$, 
\begin{equation*}
\liminf_{n\rightarrow \infty }\frac{\log \mu _{\omega }(C(\omega ,n,x))}{%
\log \left\vert C(\omega ,n,x)\right\vert }\geq \alpha -\varepsilon \text{
for }\nu _{\omega ,q}\text{ a.a. }x\text{.}
\end{equation*}

We similarly deduce that 
\begin{equation*}
\limsup_{n\rightarrow \infty }\frac{\log \mu _{\omega }(C(\omega ,n,x))}{%
\log \left\vert C(\omega ,n,x)\right\vert }\leq \alpha +\varepsilon \text{
for }\nu _{\omega ,q}\text{ a.a. }x,
\end{equation*}
using the inequality given in (\ref{Claim}). As this holds for all $%
\varepsilon =\varepsilon _{\ell }>0$, it follows that 
\begin{equation*}
\lim_{n\rightarrow \infty }\frac{\log \mu _{\omega }(C(\omega ,n,x))}{\log
\left\vert C(\omega ,n,x)\right\vert }=\alpha \text{ for }\nu _{\omega ,q}%
\text{ a.a. }x.
\end{equation*}
Consequently, for all $\omega \in \Omega _{0}$ we have $\nu _{\omega
,q}(E_{\omega ,\alpha })=1$.
\end{proof}

\begin{corollary}
\label{DimEalpha}For a.a.\ $\omega $ and for $\alpha =-\beta ^{\prime }(q)$,
we have 
\begin{equation*}
\dim _{H}E_{\omega ,\alpha }=q\alpha +\beta (q)=\dim _{H}\overline{E_{\omega
,\alpha }}=\dim _{H}\underline{E_{\omega ,\alpha }}.
\end{equation*}
\end{corollary}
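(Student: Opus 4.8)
The plan is to combine Proposition \ref{AuxMeas} with the mass distribution principle for the lower bound and with a covering argument for the upper bound. Fix $q\in\BbR$ and $\alpha=-\beta'(q)$, and work on the full-measure set $\Omega_0$ of Proposition \ref{AuxMeas} (intersected, if necessary, with a further full-measure set on which the Birkhoff averages in the next paragraph converge). By that proposition, $\nu_{\omega,q}(E_{\omega,\alpha})=1$, so in particular $E_{\omega,\alpha}\neq\emptyset$ and $\nu_{\omega,q}$ is a mass distribution carried by $E_{\omega,\alpha}$ (and a fortiori by $\overline{E_{\omega,\alpha}}$ and $\underline{E_{\omega,\alpha}}$).

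For the lower bound, I would show that for $\nu_{\omega,q}$-a.a.\ $x$, the local dimension of $\nu_{\omega,q}$ at $x$ equals $q\alpha+\beta(q)$. This is immediate from Corollary \ref{vLocDim}: since $\dimlc\mu_\omega(x)=\alpha$ on $E_{\omega,\alpha}$, which is a set of full $\nu_{\omega,q}$-measure, we get $\dimlc\nu_{\omega,q}(x)=q\dimlc\mu_\omega(x)+\beta(q)=q\alpha+\beta(q)$ for $\nu_{\omega,q}$-a.a.\ $x$. (Here one should note $q\alpha+\beta(q)\ge 0$, which follows because $\beta(q)+q\alpha=\inf_t(\beta(t)+t\alpha)\le\beta(0)\ge 0$ by convexity of $\beta$, so the claimed dimension is a legitimate nonnegative number.) The mass distribution principle (as in \cite[Chapter 4]{FalcTech} or \cite[Chapter 4]{FractalGeo3}), applied to the ball form of the local dimension via Lemma \ref{locdim}, then yields $\dim_H E_{\omega,\alpha}\ge q\alpha+\beta(q)$.

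For the upper bound I would bound $\dim_H\overline{E_{\omega,\alpha}}$ (the largest of the three sets) from above by $q\alpha+\beta(q)$, which then forces equality for all three sets since $E_{\omega,\alpha}\subseteq\overline{E_{\omega,\alpha}}$ and $E_{\omega,\alpha}\subseteq\underline{E_{\omega,\alpha}}$ but all three contain a set of full $\nu_{\omega,q}$-measure of Hausdorff dimension $\ge q\alpha+\beta(q)$. The covering argument runs as follows. If $x\in\overline{E_{\omega,\alpha}}$, then $\mu_\omega(C(\omega,n,x))\le |C(\omega,n,x)|^{\alpha-\varepsilon}$ for infinitely many $n$ when $q\ge 0$ (resp.\ $\mu_\omega(C(\omega,n,x))\ge|C(\omega,n,x)|^{\alpha+\varepsilon}$ for infinitely many $n$ when $q<0$). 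Covering $\overline{E_{\omega,\alpha}}$ by such cylinders at all large levels and estimating the $s$-dimensional sum $\sum |C|^{s}$ with $s=q\alpha+\beta(q)+$(a multiple of $\varepsilon$) via the Birkhoff theorem identity $\lim_n \frac1n\log\prod_{i=1}^n\sum_{\sigma_i}p_{\omega_i,\sigma_i}^{q}r_{\omega_i,\sigma_i}^{\beta(q)}=0$, exactly as in \cite[Proposition 11.4]{FalcTech}, shows this sum stays bounded (indeed decays), so the $s$-Hausdorff measure is finite; letting $\varepsilon\to0$ through the sequence $(\varepsilon_\ell)$ gives $\dim_H\overline{E_{\omega,\alpha}}\le q\alpha+\beta(q)$. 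Finally, the identity $q\alpha+\beta(q)=\inf_t(\beta(t)+t\alpha)$ is recorded by noting $\alpha=-\beta'(q)$ is precisely the stationarity condition for $t\mapsto\beta(t)+t\alpha$, and convexity of $\beta$ makes the critical point a minimum.

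The main obstacle is the upper-bound covering estimate. The subtlety, relative to the deterministic case, is that the relevant covers of $\overline{E_{\omega,\alpha}}$ use cylinders at levels $n$ that depend on $x$ and are only known to occur infinitely often, so one must pass to a countable union over "threshold levels" $n_0$ and control $\sum_{n\ge n_0}\sum_{\text{$n$-cylinders }C:\,\mu_\omega(C)\le|C|^{\alpha-\varepsilon}}|C|^{s}$ uniformly; here the Birkhoff convergence only holds almost surely and with an $\omega$-dependent rate, so the argument must be organized so that all exceptional null sets (one for each $\varepsilon_\ell$) are absorbed into a single full-measure set $\Omega_0$ before the level thresholds are sent to infinity. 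Once the bookkeeping of these null sets is set up correctly, the estimate is a routine adaptation of Falconer's computation.
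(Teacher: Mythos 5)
Your lower bound is the paper's argument: $\nu_{\omega,q}$ is concentrated on $E_{\omega,\alpha}$ (Proposition \ref{AuxMeas}), Corollary \ref{vLocDim} gives its local dimension there, and the mass distribution principle yields $\dim_H E_{\omega,\alpha}\ge q\alpha+\beta(q)$. The genuine gap is in your upper-bound covering step: you have the two cylinder conditions attached to the wrong signs of $q$. To dominate $\sum_C |C|^s$ by the Birkhoff-controlled quantity $\sum_C \mu_\omega(C)^q|C|^{\beta(q)}\sim\prod_{i\le n}D_{\omega_i}(q)=e^{o(n)}$ you need $|C|^{s-\beta(q)}\le\mu_\omega(C)^q$ on the covering cylinders. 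For $q\ge 0$ this requires a \emph{lower} bound $\mu_\omega(C)\ge|C|^{\alpha+\varepsilon}$ (which, for $x\in\overline{E_{\omega,\alpha}}$, holds for all sufficiently large $n$, since the limsup of $\log\mu_\omega(C_n)/\log|C_n|$ is $\alpha$), and then $s=q(\alpha+\varepsilon)+\beta(q)$ works. With the condition you propose for $q\ge0$, namely $\mu_\omega(C)\le|C|^{\alpha-\varepsilon}$, one only gets $\mu_\omega(C)^q|C|^{\beta(q)}\le|C|^{q\alpha-q\varepsilon+\beta(q)}$, an inequality in the useless direction, so $\sum_C|C|^s$ over your cylinders is not controlled by the Birkhoff identity at all and the estimate fails as stated. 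Symmetrically, for $q<0$ it is the cylinders with $\mu_\omega(C)\le|C|^{\alpha-\varepsilon}$ (available at infinitely many levels) that give $|C|^{q\alpha+\lvert q\rvert\varepsilon+\beta(q)}\le\mu_\omega(C)^q|C|^{\beta(q)}$. The argument is repairable by swapping the two cases; your threshold/countable-union bookkeeping is then fine. A separate, minor slip: your parenthetical proof that $q\alpha+\beta(q)\ge0$ is not valid (from $\inf_t(\beta(t)+t\alpha)\le\beta(0)$ and $\beta(0)\ge0$ nothing follows), though nonnegativity is not actually needed.

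You should also note that the paper needs no covering argument for the upper bound. Corollary \ref{vLocDim} holds at \emph{every} $x\in K_\omega$, so at every point of $E_{\omega,\alpha}$, $\underline{E_{\omega,\alpha}}$ or $\overline{E_{\omega,\alpha}}$ the (lower) local dimension of $\nu_{\omega,q}$ is at most $q\alpha+\beta(q)$; the second half of the mass distribution principle (if $\dimllc\nu(x)\le s$ for all $x$ in a set, its Hausdorff dimension is at most $s$) then gives the upper bound for all three sets directly, while concentration of $\nu_{\omega,q}$ gives the lower bound exactly as you argued. So the corollary is a two-sided application of the mass distribution principle to $\nu_{\omega,q}$; your Falconer-style covering route re-proves the upper half from scratch and, in the form you state it, has the inequality directions reversed.
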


\begin{proof}
According to Corollary \ref{vLocDim}, $\dimlc\nu _{\omega ,q}(x)=q\alpha
+\beta (q)$ for every $x\in E_{\omega ,\alpha }$ and a.a.\ $\omega \in
\Omega $. Furthermore, $\nu _{\omega ,q}$ is concentrated on $E_{\omega
,\alpha }$. Thus the mass distribution principle implies $\dim _{H}E_{\omega
,\alpha }=q\alpha +\beta (q)$ for a.a.\ $\omega $.

Similarly, $\dimllc\nu _{\omega ,q}(x)=q\alpha +\beta (q)$ for every $x\in 
\underline{E_{\omega ,\alpha }}$ and as $\underline{E_{\omega ,\alpha }}%
\supseteq E_{\omega ,\alpha }$, the measure $\nu _{\omega ,q}$ is also
concentrated on \underline{$E_{\omega ,\alpha }$}. Hence $\dim _{H}%
\underline{E_{\omega ,\alpha }}=q\alpha +\beta (q)$ for a.a.\ $\omega $ and
similarly for the upper local dimension.
\end{proof}

Recall that $\underline{\alpha }$ and $\overline{\alpha }$ were defined in (%
\ref{alpha}). Any vector $(\sigma _{1},\dots ,\sigma _{m})$ with $\sigma
_{j}\in \I_{j}$ for $j=1,2,\dots ,m$ satisfying 
\begin{equation*}
\frac{\log \prod_{j=1}^{m}p_{j,\sigma _{j}}^{\theta _{j}}}{\log
\prod_{j=1}^{m}r_{j,\sigma _{j}}^{\theta _{j}}}=\text{ }\underline{\alpha }%
\qquad \text{ (or }\overline{\alpha })
\end{equation*}
will be called a \textit{minimizing} (resp., \textit{maximizing}) vector.

We will use the following properties of minimizing/maximizing vectors.

\begin{lemma}
\label{L:AllMin}Assume $(\tau_{1},\dots,\tau_{m})$ and $(\sigma_{1},\dots,%
\sigma_{m})$ are both minimizing (or maximizing) vectors. Then for any index 
$k$, so is $(\tau_{1},\dots,\tau_{k-1},\sigma_{k},\tau_{k+1},\dots,\tau_{m})$%
.
\end{lemma}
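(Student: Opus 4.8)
The plan is to work directly from the defining equations for $\underline\alpha$ and $\overline\alpha$ and exploit the fact that the function $(\sigma_1,\dots,\sigma_m)\mapsto \log\prod_j p_{j,\sigma_j}^{\theta_j} \big/ \log\prod_j r_{j,\sigma_j}^{\theta_j}$ separates into contributions indexed by $j$. Write $P_j(\sigma)=\theta_j\log p_{j,\sigma}$ and $R_j(\sigma)=\theta_j\log r_{j,\sigma}$; note $R_j(\sigma)<0$ always. Then a vector $(\sigma_1,\dots,\sigma_m)$ is minimizing precisely when the ratio $\big(\sum_j P_j(\sigma_j)\big)\big/\big(\sum_j R_j(\sigma_j)\big)$ equals $\underline\alpha$, equivalently (clearing the negative denominator) when $\sum_j \big(P_j(\sigma_j)-\underline\alpha\, R_j(\sigma_j)\big)=0$, while for every vector the same sum is $\geq 0$ (for the minimizing case; $\leq 0$ for the maximizing case). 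So define $g_j(\sigma)=P_j(\sigma)-\underline\alpha\,R_j(\sigma)$ and observe that minimality of the ratio over all vectors is equivalent to: $\sum_j g_j(\sigma_j)\geq 0$ for all choices, with equality characterizing minimizing vectors.

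First I would establish the ``sum'' reformulation above; this is just algebra using $\log\prod_j r_{j,\sigma_j}^{\theta_j}<0$, so inequalities reverse consistently. Next, the key observation: since $\sum_j g_j(\sigma_j)\ge 0$ for \emph{all} choices, the minimum of $g_j$ over $\I_j$ is attained and in fact $\sum_j \min_{\sigma\in\I_j} g_j(\sigma)\ge 0$; but this lower bound is itself realized by picking, in each coordinate, a minimizer of $g_j$, so $\sum_j \min_{\sigma} g_j(\sigma)=0$. Because each term $g_j(\sigma_j)$ in a minimizing vector must then equal $\min_\sigma g_j(\sigma)$ (any strict inequality in one coordinate would, combined with the already-minimal other coordinates, force the total sum negative — contradiction, since every sum is $\ge 0$; or would force it positive, contradicting that the vector is minimizing). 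Hence a vector is minimizing if and only if each coordinate independently minimizes $g_j$. Given that coordinatewise characterization, the conclusion is immediate: if $(\tau_1,\dots,\tau_m)$ and $(\sigma_1,\dots,\sigma_m)$ are both minimizing, then $\tau_i$ minimizes $g_i$ for all $i$ and $\sigma_k$ minimizes $g_k$, so the spliced vector $(\tau_1,\dots,\tau_{k-1},\sigma_k,\tau_{k+1},\dots,\tau_m)$ also minimizes $g_j$ in every coordinate and is therefore minimizing. The maximizing case is identical with all inequalities reversed and $\overline\alpha$ in place of $\underline\alpha$.

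The only mild subtlety — and the step I would be most careful with — is the sign bookkeeping: the denominator $\log\prod_j r_{j,\sigma_j}^{\theta_j}$ is strictly negative, so multiplying the defining inequality for $\underline\alpha$ through by it flips the direction, and one must check that the resulting statement ``$\sum_j g_j(\sigma_j)\ge 0$ for all vectors'' is genuinely equivalent to ``$\underline\alpha$ is the minimum of the ratio.'' One direction uses that $\underline\alpha$ is attained (so the sum is $=0$ for some vector, forcing the ``$\ge 0$'' to be tight there); the other uses that $\underline\alpha$ is a lower bound. Once this equivalence and the consequent fact $\sum_j\min_\sigma g_j(\sigma)=0$ are in hand, the splicing invariance is a one-line consequence of the coordinatewise characterization, and no further computation is needed.
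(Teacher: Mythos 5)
Your overall strategy is sound and in fact proves something stronger than the lemma, but the key reformulation on which it rests has its inequality backwards, and as stated that claim is false. Put $g_j(\sigma)=\theta_j\log p_{j,\sigma}-\underline{\alpha}\,\theta_j\log r_{j,\sigma}$. Since the denominator $\sum_j\theta_j\log r_{j,\sigma_j}$ is strictly negative, the defining property of $\underline{\alpha}$ as the \emph{minimum} of the ratio, namely $\bigl(\sum_j\theta_j\log p_{j,\sigma_j}\bigr)\big/\bigl(\sum_j\theta_j\log r_{j,\sigma_j}\bigr)\geq\underline{\alpha}$ for every vector, becomes, after multiplying through by the negative denominator, $\sum_j g_j(\sigma_j)\leq 0$ for every vector, with equality exactly at the minimizing vectors; this is the inequality the paper works with. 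Your proposal asserts the opposite orientation ($\geq 0$ for all vectors in the minimizing case, $\leq 0$ in the maximizing case; both are swapped). A one-coordinate example shows the claim fails as written: with $m=1$, $\theta_1=1$, and two indices with $(p,r)=(1/2,1/2)$ and $(1/4,1/2)$, one has $\underline{\alpha}=1$, while the second index gives $g_1=\log\tfrac14-\log\tfrac12=\log\tfrac12<0$. Consequently the quantities your argument should track are the coordinatewise \emph{maxima} of $g_j$, not the minima.

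The error is purely one of orientation, so the fix is mechanical: flip the inequalities and replace ``minimizer of $g_j$'' by ``maximizer of $g_j$'' throughout. The coordinatewise-argmax choice is itself an admissible vector, so $\sum_j\max_{\sigma\in\I_j}g_j(\sigma)\leq 0$; any minimizing vector has sum $0$ with each term at most the corresponding maximum, forcing $\sum_j\max_{\sigma}g_j(\sigma)=0$ and forcing each coordinate of a minimizing vector to attain that maximum; conversely, a vector attaining the maximum in every coordinate has sum $0$ and is therefore minimizing. This yields the product-structure characterization of the set of minimizing (resp.\ maximizing) vectors, of which the splicing statement is an immediate corollary, and it is genuinely stronger than what the lemma asks for. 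The paper's own proof is more economical: it adds the two equalities $\sum_j g_j(\tau_j)=0$ and $\sum_j g_j(\sigma_j)=0$, regroups the total into the two sums corresponding to the spliced vectors $(\sigma_1,\tau_2,\dots,\tau_m)$ and $(\tau_1,\sigma_2,\dots,\sigma_m)$ (taking $k=1$ without loss of generality), and observes that two nonpositive numbers summing to zero must both vanish. Both arguments rest on the same linearization; once your signs are corrected, yours is equally valid and gives the extra information that the minimizing vectors form a product set.
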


\begin{proof}
The definition of $\underline{\alpha }$ means that for all choices of $%
(\gamma _{1},\dots ,\gamma _{m})$, with $\gamma _{j}\in \I_{j}$, we have 
\begin{equation*}
\sum_{j=1}^{m}\log p_{j,\gamma _{j}}^{\theta _{j}}-\underline{\alpha }%
\sum_{j=1}^{m}\log r_{j,\gamma _{j}}\leq 0,
\end{equation*}
with equality if $(\gamma _{1},\dots ,\gamma _{m})$ is a minimizing vector
and strict inequality if it is not.

Without loss of generality assume the index $k$ of the statement of the
lemma is $k=1$. As $(\tau _{1},\dots ,\tau _{m})$ and $(\sigma _{1},\dots
,\sigma _{m})$ are minimizing vectors, 
\begin{equation*}
0=\sum_{j=1}^{m}\log p_{j,\tau _{j}}^{\theta _{j}}+\sum_{j=1}^{m}\log
p_{j,\sigma _{j}}^{\theta _{j}}-\underline{\alpha }\left( \sum_{j=1}^{m}\log
r_{j,\tau _{j}}^{\theta _{j}}+\sum_{j=1}^{m}\log r_{j,\sigma _{j}}^{\theta
_{j}}\right) .
\end{equation*}
Rearranging terms, we have 
\begin{multline*}
0=\left[ \log p_{1,\sigma _{1}}^{\theta _{1}}+\sum_{j=2}^{m}\log p_{j,\tau
_{j}}^{\theta _{j}}-\underline{\alpha }(\log r_{1,\sigma _{1}}^{\theta
_{1}}+\sum_{j=2}^{m}\log r_{j,\tau _{j}}^{\theta _{j}})\right] \\
+\left[ \log p_{1,\tau _{1}}^{\theta _{1}}+\sum_{j=2}^{m}\log p_{j,\sigma
_{j}}^{\theta _{j}}-\underline{\alpha }(\log r_{1,\tau _{1}}^{\theta
_{1}}+\sum_{j=2}^{m}\log r_{j,\sigma _{j}}^{\theta _{j}})\right] .
\end{multline*}
Since both square-bracketed terms are non-positive, both must equal $0$.
Hence $(\sigma _{1},\tau _{2},\dots ,\tau _{m})$ and $(\tau _{1},\sigma
_{2},\dots ,\sigma _{m})$ are both minimizing vectors.
\end{proof}

\begin{lemma}
\label{L:min}Let $i\in \{1,\dots ,m\}$ be given and assume the index $\sigma
_{i}$ has the property that there is no choice of indices $\sigma _{1},\dots
,\sigma _{i-1},\sigma _{i+1},\dots ,\sigma _{m}$ such that $(\sigma
_{1},\dots ,\sigma _{m})$ is a minimizing vector. Then 
\begin{equation*}
\frac{p_{i,\sigma _{i}}^{q}r_{i,\sigma _{i}}^{\beta (q)}}{D_{i}(q)}%
\rightarrow 0\text{ as }q\rightarrow \pm \infty \text{.}
\end{equation*}
\end{lemma}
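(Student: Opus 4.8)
The plan is to reduce the statement to the asymptotics of $\beta(q)/q$. Concretely, I claim $\beta(q)/q\to-\underline{\alpha}$ as $q\to+\infty$ and $\beta(q)/q\to-\overline{\alpha}$ as $q\to-\infty$; granting this, the conclusion comes from a single estimate comparing the $\sigma_i$-summand of $D_i(q)$ with its largest summand. Throughout write $t_q=\beta(q)/q$ and $a_{j,k}(t)=\log p_{j,k}+t\log r_{j,k}$, so that $p_{j,k}^{q}r_{j,k}^{\beta(q)}=e^{q\,a_{j,k}(t_q)}$ and $D_j(q)=\sum_{k\in\I_j}e^{q\,a_{j,k}(t_q)}$.

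\emph{The limit of $\beta(q)/q$.} This is the only step that uses the defining equation $\sum_{j=1}^{m}\theta_j\log D_j(q)=0$ directly. For $q>0$ the envelope bound $\max_{k}(q a_{j,k})\le\log\sum_{k}e^{q a_{j,k}}\le\max_{k}(q a_{j,k})+\log\card\I_{j}$ turns this equation into $-C/q\le g(t_q)\le 0$, where $C=\sum_{j=1}^{m}\theta_j\log\card\I_{j}$ and
\[
g(t)=\sum_{j=1}^{m}\theta_j\max_{k\in\I_j}\bigl(\log p_{j,k}+t\log r_{j,k}\bigr).
\]
Thus $g(t_q)\to 0$ as $q\to+\infty$. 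The function $g$ is convex and piecewise affine, and for $t_1<t_2$ one has $g(t_2)\le g(t_1)+(t_2-t_1)\sum_{j=1}^{m}\theta_j\max_{k}\log r_{j,k}$ with the displayed sum negative, so $g$ is a strictly decreasing, coercive homeomorphism of $\BbR$. Since the maximum of a separable sum is the sum of the maxima,
\[
g(-\underline{\alpha})=\max_{(\gamma_1,\dots,\gamma_m)}\Bigl(\log\prod_{j=1}^{m}p_{j,\gamma_j}^{\theta_j}-\underline{\alpha}\log\prod_{j=1}^{m}r_{j,\gamma_j}^{\theta_j}\Bigr),
\]
and the definition of $\underline{\alpha}$, combined with $\log\prod_{j}r_{j,\gamma_j}^{\theta_j}<0$, shows this maximum is $0$ and is attained precisely on the minimizing vectors. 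Hence $t_q=g^{-1}(g(t_q))\to g^{-1}(0)=-\underline{\alpha}$. The case $q\to-\infty$ is symmetric: the envelope bound now features $\min_{k}$ (as $q<0$), so $\sum_{j}\theta_j\min_{k}a_{j,k}(t_q)\to 0$; the map $t\mapsto\sum_{j}\theta_j\min_{k}a_{j,k}(t)$ is likewise a strictly decreasing, coercive homeomorphism, and it vanishes at $t=-\overline{\alpha}$ by the definition of $\overline{\alpha}$, whence $\beta(q)/q\to-\overline{\alpha}$.

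\emph{Product structure and the estimate.} By the separability argument already used in Lemma \ref{L:AllMin}, the minimizing vectors form $M_1\times\dots\times M_m$, where $M_j\subseteq\I_j$ is the set of indices maximizing $\log p_{j,k}-\underline{\alpha}\log r_{j,k}$; symmetrically the maximizing vectors form $M_1'\times\dots\times M_m'$ with $M_j'$ the set of minimizers of $\log p_{j,k}-\overline{\alpha}\log r_{j,k}$. Thus the hypothesis that $\sigma_i$ cannot be completed to a minimizing vector is exactly $\sigma_i\notin M_i$, i.e.\ $a_{i,\sigma_i}(-\underline{\alpha})<\max_{k\in\I_i}a_{i,k}(-\underline{\alpha})$; and the condition relevant at the other end is $\sigma_i\notin M_i'$, i.e.\ $a_{i,\sigma_i}(-\overline{\alpha})>\min_{k\in\I_i}a_{i,k}(-\overline{\alpha})$. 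Now, since $D_i(q)$ dominates each of its positive summands, for $q>0$
\[
\frac{p_{i,\sigma_i}^{q}r_{i,\sigma_i}^{\beta(q)}}{D_i(q)}\le\exp\Bigl(q\bigl[a_{i,\sigma_i}(t_q)-\max_{k\in\I_i}a_{i,k}(t_q)\bigr]\Bigr),
\]
the bracket being a minimum of finitely many affine functions of $t_q$, hence continuous; it is strictly negative at $t_q=-\underline{\alpha}$ by the previous line, so by the first step it stays $\le-\delta<0$ for all large $q$ and the ratio is $\le e^{-\delta q}\to 0$. For $q\to-\infty$ the largest summand of $D_i(q)$ is $\exp(q\min_{k}a_{i,k}(t_q))$, so the ratio is at most $\exp(q[a_{i,\sigma_i}(t_q)-\min_{k}a_{i,k}(t_q)])$; the bracket is continuous in $t_q$ and strictly positive at $t_q=-\overline{\alpha}$, so for $q$ sufficiently negative the exponent $\to-\infty$ and the ratio $\to 0$.

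The main obstacle is the first step: it is the only place where one cannot simply invoke the combinatorics of minimizing vectors but must handle the transcendental equation for $\beta$, and its two slightly delicate ingredients are the $\log$-sum envelope bound — which lets one replace each sum by its largest term uniformly in $q$ — and the identity $g(-\underline{\alpha})=0$, where the separable structure of $\underline{\alpha}$ (the same structure behind Lemma \ref{L:AllMin}) is exactly what is needed. Once the limits $\beta(q)/q\to-\underline{\alpha}$ and $-\overline{\alpha}$ are in hand, the remaining estimates are routine.
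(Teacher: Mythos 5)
Your argument is correct, but it takes a genuinely different route from the paper's. The paper proceeds by contradiction: if the ratio stayed bounded below by $\varepsilon$ along some $q_n\to\infty$, then comparing each sum in (\ref{DefBeta2}) with its largest term and using the strict gap $\log\prod_j p_{j,\sigma_j}^{\theta_j}/\log\prod_j r_{j,\sigma_j}^{\theta_j}\geq\underline{\alpha}+\delta$ (available precisely because $\sigma_i$ cannot be completed to a minimizing vector) forces $\underline{\alpha}q_n+\beta(q_n)\leq C_1-\delta q_n$, after which testing (\ref{DefBeta2}) against an actual minimizing vector makes the defining product blow up, contradicting $\beta$'s definition. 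You instead first extract the asymptotics $\beta(q)/q\to-\underline{\alpha}$ (resp.\ $-\overline{\alpha}$) from the defining equation via the log-sum envelope bound and the strictly decreasing piecewise-affine function $g$, using the same separable structure that underlies Lemma \ref{L:AllMin} to get $g(-\underline{\alpha})=0$ with equality exactly at minimizing vectors, and then finish with a one-line domination of the $\sigma_i$-summand by the largest summand of $D_i(q)$; all the individual estimates (envelope bound, monotonicity and coercivity of $g$, product structure of the minimizing set, continuity of the affine gap) check out. Your route costs a bit more in the first step but buys more: it identifies the exact limit of $\beta(q)/q$ (which, together with the convexity of $\beta$ established in Theorem \ref{thm:main}, essentially re-derives Lemma \ref{RangeBeta}) and gives an explicit exponential rate $e^{-\delta q}$, whereas the paper's contradiction argument is shorter and leans only on the definition of $\beta$.

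One remark on the statement itself: as written it claims the limit as $q\to\pm\infty$ under the minimizing hypothesis alone, but the $q\to-\infty$ half genuinely needs the symmetric hypothesis that $\sigma_i$ is not completable to a \emph{maximizing} vector, exactly as you use ($\sigma_i\notin M_i'$): with equal contraction ratios and unequal probabilities, the ratio at the non-minimizing index tends to $1$, not $0$, as $q\to-\infty$. The paper's own proof treats only $q\to+\infty$, and only that case (plus its maximizing-vector mirror) is invoked in Lemma \ref{RangeBeta}, so your reading is the correct one rather than a gap in your proposal.
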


\begin{proof}
Without loss of generality $i=1=\sigma_{1}$. The assumption that no vector $%
(1,\sigma_{2},\dots,\sigma_{m})$ is minimizing, and the fact that there are
only finitely many probabilities and contraction factors, ensures that there
is some $\delta >0$ such that 
\begin{equation}
\frac{\log \prod_{j=1}^{m}p_{j,\sigma_{j}}^{\theta _{j}}}{\log
\prod_{j=1}^{m}r_{j,\sigma_{j}}^{\theta _{j}}}\geq \text{ }\underline{\alpha 
}+\delta  \label{min}
\end{equation}
for all choices of $\sigma_{j}\in \I_{j}$ with $j=2,\dots,m$.

We proceed by contradiction and assume there is a sequence $q_{n}\rightarrow
\infty $ and $\varepsilon >0$ such that 
\begin{equation*}
p_{1,1}^{q_{n}}r_{1,1}^{\beta (q_{n})}\geq \varepsilon
D_{1}(q_{n})=\varepsilon \sum_{k\in \I_{1}}p_{1,k}^{q_{n}}r_{1,k}^{\beta
(q_{n})}\text{ for all }n\text{.}
\end{equation*}
Coupled with the definition of $\beta (q)$ (see \ref{DefBeta2}) this gives 
\begin{equation*}
\prod_{j=2}^{m}\left( \sum_{k\in \I_{j}}p_{j,k}^{q_{n}}r_{j,k}^{\beta
(q_{n})}\right) ^{\theta _{j}}\left( p_{1,1}^{q_{n}}r_{1,1}^{\beta
(q_{n})}\right) ^{\theta _{1}}\geq \varepsilon ^{\theta _{1}}\text{.}
\end{equation*}

Assume that $k_{j}$ is the index such that $p_{j,k_{j}}^{q_{n}}r_{j,k_{j}}^{%
\beta (q_{n})}$ is the maximal term in the sum over $\I_{j}$. As $\sum_{k\in %
\I_{j}}p_{j,k}^{q_{n}}r_{j,k}^{\beta (q_{n})}\leq (\cN%
(j)+1)p_{j,k_{j}}^{q_{n}}r_{j,k_{j}}^{\beta (q_{n})}$,  we have 
\begin{equation*}
\prod_{j=2}^{m}\left( p_{j,k_{j}}^{q_{n}}r_{j,k_{j}}^{\beta (q_{n})}\right)
^{\theta _{j}}\left( p_{1,1}^{q_{n}}r_{1,1}^{\beta (q_{n})}\right) ^{\theta
_{1}}\geq \varepsilon _{0}
\end{equation*}%
for $\varepsilon _{0}=\min_{j}\varepsilon ^{\theta _{j}}/\max_{j}(\cN(j)~+~1)
$. Reorganizing gives 
\begin{equation*}
\left( p_{1,1}^{\theta _{1}}p_{2,k_{2}}^{\theta _{2}}\dots
p_{m,k_{m}}^{\theta _{m}}\right) ^{q_{n}}\left( r_{1,1}^{\theta
_{1}}r_{2,k_{2}}^{\theta _{2}}\dots r_{m,k_{m}}^{\theta _{m}}\right) ^{\beta
(q_{n})}\geq \varepsilon _{0}.
\end{equation*}%
Using (\ref{min}) we deduce that 
\begin{equation*}
\left( r_{1,1}^{\theta _{1}}r_{2,k_{2}}^{\theta _{2}}\dots
r_{m,k_{m}}^{\theta _{m}}\right) ^{\beta (q_{n})+(\underline{\alpha }+\delta
)q_{n}}\geq \varepsilon _{0}
\end{equation*}%
and thus 
\begin{equation*}
\underline{\alpha }q_{n}+\beta (q_{n})\leq \frac{\log \varepsilon _{0}}{\log
r_{1,1}^{\theta _{1}}r_{2,k_{2}}^{\theta _{2}}\dots r_{m,k_{m}}^{\theta _{m}}%
}-\delta q_{n}\leq \frac{\log \varepsilon _{0}}{\log r_{\max }}-\delta
q_{n}=C_{1}-\delta q_{n}
\end{equation*}%
for a suitable constant $C_{1}$. In particular, $\underline{\alpha }%
q_{n}+\beta (q_{n})\rightarrow -\infty $ as $n\rightarrow \infty $.

Now let $(\sigma_{1},\dots,\sigma_{m})$ be any minimizing sequence. As $%
\sum_{k\in \I_{j}}p_{j,k}^{q_{n}}r_{j,k}^{\beta (q_{n})}$ dominates any one
term in the sum, 
\begin{equation*}
\prod_{j=1}^{m}\left( \sum_{k\in \I_{j}}p_{j,k}^{q_{n}}r_{j,k}^{\beta
(q_{n})}\right) ^{\theta _{j}}\geq \prod_{j=1}^{m}\left(
p_{j,\sigma_{j}}^{q_{n}}r_{j,\sigma_{j}}^{\beta (q_{n})}\right) ^{\theta
_{j}}=\prod_{j=1}^{m}\left( r_{j,\sigma_{j}}^{\beta (q_{n})+\underline{%
\alpha }q_{n}}\right) ^{\theta _{j}}.
\end{equation*}
Since $\underline{\alpha }q_{n}+\beta (q_{n})\leq C_{1}-\delta q_{n}<0$ (for
large $n)$ it follows that for a new constant $C_{2}>0$, 
\begin{equation*}
\prod_{j=1}^{m}\left( \sum_{k\in \I_{j}}p_{j,k}^{q_{n}}r_{j,k}^{\beta
(q_{n})}\right) ^{\theta _{j}}\geq C_{2}r_{\max }^{-\delta q_{n}}\rightarrow
\infty\quad \text{ as }\quad n\rightarrow \infty.
\end{equation*}
This contradicts the definition of $\beta (q)$, which proves the result.
\end{proof}

\begin{lemma}
\label{RangeBeta}The function $\beta $ satisfies $-\beta ^{\prime
}(q)\rightarrow \underline{\alpha }$ as $q\rightarrow \infty $ and $-\beta
^{\prime }(q)\rightarrow \overline{\alpha }$ as $q\rightarrow -\infty $.
\end{lemma}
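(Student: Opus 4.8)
The plan is to compute the limit of $-\beta'(q)$ as $q\to+\infty$ using the explicit formula for $-\beta'(q)$ in terms of the quantities $p_{j,k}^q r_{j,k}^{\beta(q)}/D_j(q)$, and to show that the only terms surviving in the numerator and denominator are those corresponding to minimizing indices, whose ratio of $\log p$ to $\log r$ is exactly $\underline{\alpha}$. The case $q\to-\infty$ is handled symmetrically, yielding $\overline{\alpha}$, so I only spell out $q\to+\infty$.

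First, normalize notation: write $w_{j,k}(q) = p_{j,k}^q r_{j,k}^{\beta(q)}/D_j(q)$, so that $\sum_{k\in\I_j} w_{j,k}(q) = 1$ for every $j$, and
\begin{equation*}
-\beta'(q) = \frac{\sum_{j=1}^m \theta_j \sum_{k\in\I_j} w_{j,k}(q)\log p_{j,k}}{\sum_{j=1}^m \theta_j \sum_{k\in\I_j} w_{j,k}(q)\log r_{j,k}}.
\end{equation*}
By Lemma \ref{L:min}, if an index $\sigma_i\in\I_i$ belongs to \emph{no} minimizing vector, then $w_{i,\sigma_i}(q)\to 0$ as $q\to+\infty$. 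Hence in the limit both the numerator and denominator are controlled only by those indices that do occur in some minimizing vector. Call an index $k\in\I_j$ \emph{$\underline{\alpha}$-admissible} if it appears as the $j$-th coordinate of some minimizing vector. By Lemma \ref{L:AllMin}, any tuple formed by choosing, for each $j$, an $\underline{\alpha}$-admissible index is itself a minimizing vector; consequently every $\underline{\alpha}$-admissible tuple $(\sigma_1,\dots,\sigma_m)$ satisfies
\begin{equation*}
\sum_{j=1}^m \theta_j \log p_{j,\sigma_j} = \underline{\alpha}\,\sum_{j=1}^m \theta_j \log r_{j,\sigma_j}.
\end{equation*}

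Now take any sequence $q_n\to+\infty$ along which $-\beta'(q_n)$ converges (possible since $-\beta'$ is bounded: $\log r_{\min}\le$ slopes $\le \log r_{\max}<0$ forces boundedness, or one simply extracts a subsequence). Set $N_j$ = sum of $w_{j,k}(q_n)$ over $\underline{\alpha}$-admissible $k$ and $1-N_j$ the complementary mass; by Lemma \ref{L:min}, $1-N_j\to 0$. Split each inner sum accordingly; the non-admissible part contributes $O(1-N_j)\to 0$ in both numerator and denominator (all the $\log p_{j,k}$, $\log r_{j,k}$ being finitely many bounded quantities). Renormalizing the admissible weights to a probability vector $\widehat w_{j,k}(q_n)$, we get that $-\beta'(q_n)$ has the same limit as
\begin{equation*}
\frac{\sum_{j}\theta_j \sum_{k\ \mathrm{adm.}} \widehat w_{j,k}(q_n)\log p_{j,k}}{\sum_{j}\theta_j \sum_{k\ \mathrm{adm.}} \widehat w_{j,k}(q_n)\log r_{j,k}}.
\end{equation*}
Because the ratio $\log p_{j,\sigma_j}^{\theta_j}$-sum to $\log r_{j,\sigma_j}^{\theta_j}$-sum equals $\underline{\alpha}$ for \emph{every} admissible tuple, and $\widehat w$ is a product of probability vectors on the admissible indices, expanding the numerator over the product measure $\prod_j \widehat w_{j,\cdot}$ and using additivity of $\log$ shows the numerator equals $\underline{\alpha}$ times the denominator exactly (not just in the limit). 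Hence $-\beta'(q_n)\to\underline{\alpha}$; since the limit is independent of the convergent subsequence, $-\beta'(q)\to\underline{\alpha}$.

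The main obstacle is the bookkeeping in the last step: one must be careful that "admissible for $j$ separately" genuinely assembles into a minimizing vector (this is exactly what Lemma \ref{L:AllMin} delivers, by a coordinate-swapping induction), and that the renormalized weights $\widehat w_{j,k}$ factor as a product over $j$ so that the expectation of $\sum_j \theta_j\log p_{j,\sigma_j}$ over the product measure can be compared termwise with that of $\sum_j\theta_j\log r_{j,\sigma_j}$. Once the admissible index set is closed under the swapping operation, the identity $\mathrm{numerator} = \underline{\alpha}\cdot\mathrm{denominator}$ is forced, and no delicate estimate on the rate at which the non-admissible weights vanish is needed beyond "they go to $0$," which Lemma \ref{L:min} already provides. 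The $q\to-\infty$ statement is identical with $\underline{\alpha}$, "minimizing," and Lemma \ref{L:min}'s $q\to-\infty$ conclusion replaced by $\overline{\alpha}$, "maximizing," and the corresponding hypothesis.
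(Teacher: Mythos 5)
Your proposal is correct and follows essentially the same route as the paper: the same splitting of the weights $w_{j,k}(q)$ into indices that do or do not occur in a minimizing vector (killed via Lemma \ref{L:min}), combined with Lemma \ref{L:AllMin} and the product-measure expansion to see that the admissible part of the numerator equals $\underline{\alpha}$ times the admissible part of the denominator exactly. The only cosmetic difference is that you pass to a convergent subsequence of $-\beta'(q_n)$ rather than writing the limit directly, which changes nothing of substance.
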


\begin{proof}
Recall that 
\begin{equation*}
-\beta ^{\prime }(q)=\frac{\sum_{j=1}^{m}\theta _{j}\sum_{k\in \I%
_{j}}p_{j,k}^{q}r_{j,k}^{\beta (q)}\log p_{j,k}/D_{j}(q)}{%
\sum_{j=1}^{m}\theta _{j}\sum_{k\in \I_{j}}p_{j,k}^{q}r_{j,k}^{\beta
(q)}\log r_{j,k}/D_{j}(q)}.
\end{equation*}
For the duration of this lemma, we will let 
\begin{equation*}
b_{j,k}(q)=\frac{p_{j,k}^{q}r_{j,k}^{\beta (q)}}{D_{j}(q)}.
\end{equation*}

Let $\J_{j}$ be the set of indices $\sigma _{j}\in \I_{j}$ such that there
is some choice of $\tau _{i}\in \I_{i}$, for each $i\neq j$, so that the
vector $(\tau _{1},\dots ,\tau _{j-1},\sigma _{j},\tau _{j+1},\dots ,\tau
_{m})$ is minimizing. Lemma \ref{L:AllMin} implies that if $\sigma _{j}\in \J%
_{j}$ for all $j$, then $(\sigma _{1},\dots ,\sigma _{m})$ is a minimizing
choice and hence 
\begin{equation*}
\log p_{1,\sigma _{1}}^{\theta _{1}}\dots p_{m,\sigma _{m}}^{\theta _{m}}=%
\underline{\alpha }\log r_{1,\sigma _{1}}^{\theta _{1}}\dots r_{m,\sigma
_{m}}^{\theta _{m}}.
\end{equation*}%
This means 
\begin{equation*}
\underline{\alpha }=\frac{\sum_{i=1}^{m}\sum_{\sigma _{i}\in \J%
_{i}}\prod_{j=1}^{m}b_{j,\sigma _{j}}(q)\log p_{1,\sigma _{1}}^{\theta
_{1}}\dots p_{m,\sigma _{m}}^{\theta _{m}}}{\sum_{i=1}^{m}\sum_{\sigma
_{i}\in \J_{i}}\prod_{j=1}^{m}b_{j,\sigma _{j}}(q)\log r_{1,\sigma
_{1}}^{\theta _{1}}\dots r_{m,\sigma _{m}}^{\theta _{m}}}.
\end{equation*}

Now, 
\begin{align*}
\sum_{i=1}^{m}\sum_{\sigma _{i}\in \J_{i}}\prod_{j=1}^{m}b_{j,\sigma
_{j}}(q)\log p_{1,\sigma _{1}}^{\theta _{1}}\dots p_{m,\sigma _{m}}^{\theta
_{m}}& =\sum_{i=1}^{m}\sum_{\sigma _{i}\in \J_{i}}b_{1,\sigma _{1}}\dots
b_{m,\sigma _{m}}\sum_{t=1}^{m}\log p_{t,\sigma _{t}}^{\theta _{t}} \\
& =\sum_{t=1}^{m}\sum_{\sigma _{t}\in \J_{t}}b_{t,\sigma _{t}}\log
p_{t,\sigma _{t}}^{\theta _{t}}\prod_{i\neq t}\sum_{\sigma _{i}\in \J%
_{i}}b_{i,\sigma _{i}}
\end{align*}
and similarly for $\sum_{i=1}^{m}\sum_{\sigma _{i}\in \J_{i}}%
\prod_{j=1}^{m}b_{j,\sigma _{j}}(q)\log r_{1,\sigma _{1}}^{\theta _{1}}\dots
r_{m,\sigma _{m}}^{\theta _{m}}$. Hence for every $q$, 
\begin{equation*}
\underline{\alpha }=\frac{\sum_{t=1}^{m}\sum_{\sigma _{t}\in \J%
_{t}}b_{t,\sigma _{t}}\log p_{t,\sigma _{t}}^{\theta _{t}}\prod_{i\neq
t}\sum_{\sigma _{i}\in \J_{i}}b_{i,\sigma _{i}}(q)}{\sum_{t=1}^{m}\sum_{%
\sigma _{t}\in \J_{t}}b_{t,\sigma _{t}}\log r_{t,\sigma _{t}}^{\theta
_{t}}\prod_{i\neq t}\sum_{\sigma _{i}\in \J_{i}}b_{i,\sigma _{i}}(q)}.
\end{equation*}

By Lemma \ref{L:min}, if $\sigma _{i}\notin \J_{i}$ then $b_{i,\sigma
_{i}}(q)\rightarrow 0$ as $q\rightarrow \infty $, so since $\sum_{k\in \I%
_{i}}b_{i,k}(q)=1$, it must be that $\sum_{\sigma _{i}\in \J_{i}}b_{i,\sigma
_{i}}(q)\rightarrow 1$. Therefore 
\begin{equation*}
\underline{\alpha }=\lim_{q\rightarrow \infty }\frac{\sum_{t=1}^{m}\sum_{%
\sigma _{t}\in \J_{t}}b_{t,\sigma _{t}}(q)\log p_{t,\sigma _{t}}^{\theta
_{t}}}{\sum_{t=1}^{m}\sum_{\sigma _{t}\in \J_{t}}b_{t,\sigma _{t}}(q)\log
r_{t,\sigma _{t}}^{\theta _{t}}}\text{.}
\end{equation*}
In terms of this notation, we can write 
\begin{equation*}
-\beta ^{\prime }(q)=\frac{\sum_{i=1}^{m}\left( \sum_{\sigma _{i}\in \J%
_{i}}b_{i,\sigma _{i}}(q)\log p_{i,\sigma _{i}}^{\theta _{i}}+\sum_{\sigma
_{i}\in \I_{i}\backslash \J_{i}}b_{i,\sigma _{i}}(q)\log p_{i,\sigma
_{i}}^{\theta _{i}}\right) }{\sum_{i=1}^{m}\left( \sum_{\sigma _{i}\in \J%
_{i}}b_{i,\sigma _{i}}(q)\log r_{i,\sigma _{i}}^{\theta _{i}}+\sum_{\sigma
_{i}\in \I_{i}\backslash \J_{i}}b_{i,\sigma _{i}}(q)\log r_{i,\sigma
_{i}}^{\theta _{i}}\right) }.
\end{equation*}
Using again the fact that if $\sigma _{i}\notin \J_{i}$ then $b_{i,\sigma
_{i}}(q)\rightarrow 0$ as $q\rightarrow \infty$, it follows that 
\begin{equation*}
\lim_{q\rightarrow \infty }-\beta ^{\prime }(q)=\lim_{q\rightarrow \infty } 
\frac{\sum_{i=1}^{m}\sum_{\sigma _{i}\in \J_{i}}b_{i,\sigma _{i}}(q)\log
p_{i,\sigma _{i}}^{\theta _{i}}}{\sum_{i=1}^{m}\sum_{\sigma _{i}\in \J%
_{i}}b_{i,\sigma _{i}}(q)\log r_{i,\sigma _{i}}^{\theta _{i}}}=\underline{%
\alpha }.
\end{equation*}

The arguments are similar for $\lim_{q\rightarrow -\infty }-\beta ^{\prime
}(q)$.
\end{proof}

\begin{proof}[Proof of Theorem \protect\ref{thm:main}]
First, we will check that the attainable (upper/lower) local dimensions lie
in the interval $[\underline{\alpha },\overline{\alpha }]$ for a.a.\ $\omega$%
.

For each $\omega \in \Omega $, $j=1,\dots ,m$ and $n\in \BbN$, let 
\begin{equation*}
s_{j}(\omega ,n)=\card\{i\in \{1,\dots ,n\}:\omega _{i}=j\}\text{.}
\end{equation*}
By the strong law of large numbers, $s_{j}(\omega ,n)/n\rightarrow \theta
_{j}$ a.s.\ for each $j=1,\dots ,m$, say for all $\omega \in \Omega _{1}$, a
set of full measure. We will actually prove that the set of lower local
dimensions of $\mu _{\omega }$ lie in $[\underline{\alpha },\overline{\alpha 
}]$ for all $\omega \in \Omega _{1}$. The arguments are similar for the
other cases.

Fix $\omega \in \Omega _{1}$ and choose $x\in K_{\omega }$, say $x\in
\bigcap_{n}[\sigma _{1},\dots ,\sigma _{n}]_{\omega }$. Put 
\begin{equation*}
s_{j,k}(x,\omega ,n)=\card\{i\in \{1,\dots ,n\}:(\omega _{i},\sigma
_{i})=(j,k)\},
\end{equation*}
and set $a_{j,k}(x,\omega ,n)=s_{j,k}(x,\omega ,n)/(n\theta _{j})$. Then $%
a_{j,k}\geq 0$ and 
\begin{equation*}
\sum_{k\in \I_{j}}a_{j,k}(x,\omega ,n)=\frac{1}{n\theta _{j}}\sum_{k\in \I%
_{j}}s_{j,k}(x,\omega ,n)=\frac{s_{j}(\omega ,n)}{n\theta _{j}}\rightarrow
1\ \text{ as }\ n\rightarrow \infty
\end{equation*}
since $\omega \in \Omega _{1}$.

Recall from Lemma \ref{locdim} that 
\begin{equation*}
\dimllc\mu _{\omega }(x)=\liminf_{n\rightarrow \infty }\frac{\log \mu
_{\omega }([\sigma _{1},\dots ,\sigma _{n}]_{\omega })}{\log \left\vert
[\sigma _{1},\dots ,\sigma _{n}]_{\omega }\right\vert }=\liminf_{n%
\rightarrow \infty }\frac{\log p_{\omega _{1},\sigma _{1}}\dots p_{\omega
_{n},\sigma _{n}}}{\log r_{\omega _{1},\sigma _{1}}\dots r_{\omega
_{n},\sigma _{n}}}.
\end{equation*}
Thus 
\begin{align}
\dimllc\mu _{\omega }(x)& =\liminf_{n}\frac{\log \prod_{j=1}^{m}\prod_{k\in %
\I_{j}}p_{j,k}^{s_{j,k}(x,\omega ,n)}}{\log \prod_{j=1}^{m}\prod_{k\in \I%
_{j}}r_{j,k}^{s_{j,k}(x,\omega ,n)}}  \notag \\
& =\liminf_{n}\frac{\log \prod_{j=1}^{m}\left( \prod_{k\in \I%
_{j}}p_{j,k}^{a_{j,k}(x,\omega ,n)}\right) ^{n\theta _{j}}}{\log
\prod_{j=1}^{m}\left( \prod_{k\in \I_{j}}r_{j,k}^{a_{j,k}(x,\omega
,n)}\right) ^{n\theta _{j}}}.  \label{DimForm}
\end{align}
For each $\omega \in \Omega _{1}$ and $x\in K_{\omega }$ choose a
subsequence (not renamed) where the $\liminf_{n}$ is actually the limit. Now
choose a further subsequence (also not renamed) such that $a_{j,k}(x,\omega
,n)\rightarrow A_{j,k}(x,\omega )$ as $n\rightarrow \infty $ for each $%
j=1,\dots ,m$ and $k\in \I_{j}$. Then $\sum_{k}A_{j,k}(x,\omega )=1$ for all 
$j$. Taking the limit of equation \eqref{DimForm} along this subsequence we
have 
\begin{equation}
\dimllc\mu _{\omega }(x)=\frac{\log \prod_{j=1}^{m}\left( \prod_{k\in \I%
_{j}}p_{j,k}^{A_{j,k}(x,\omega )}\right) ^{\theta _{j}}}{\log
\prod_{j=1}^{m}\left( \prod_{k\in \I_{j}}r_{j,k}^{A_{j,k}(x,\omega )}\right)
^{\theta _{j}}}.  \label{LowerBd}
\end{equation}

Showing $\dimllc \mu _{\omega }(x)\geq \underline{\alpha }$ is therefore
equivalent to proving 
\begin{equation*}
\log \prod_{j=1}^{m}\left(\prod_{k\in\I_j} p_{j,k}^{A_{j,k}}\right) ^{\theta
_{j}}\leq \underline{\alpha }\log \prod_{j=1}^{m}\left(\prod_{k\in\I_j}
r_{j,k}^{A_{j,k}}\right)^{\theta_{j}},
\end{equation*}
or 
\begin{equation*}
\sum_{j}\theta _{j} \sum_{k\in\I_j}A_{j,k}\log p_{j,k} \leq \underline{%
\alpha }\sum_{j}\theta _{j} \sum_{k\in\I_j}A_{j,k}\log r_{j,k}.
\end{equation*}
Putting $f_{j,k}=\log p_{j,k}-\underline{\alpha }\log r_{j,k}$, we can
rewrite this required inequality as 
\begin{equation}
\sum_{j=1}^m\theta _{j} \sum_{k\in\I_j} A_{j,k}f_{j,k} \leq 0.
\label{NeededBd}
\end{equation}

By the definition of $\underline{\alpha }$, for all choices of $\sigma
_{j}\in \I_{j}$ we have 
\begin{equation*}
\log p_{1,\sigma _{1}}^{\theta _{1}}\dots p_{m,\sigma _{m}}^{\theta
_{m}}\leq \underline{\alpha }\log r_{1,\sigma _{1}}^{\theta _{1}}\dots
r_{m,\sigma _{m}}^{\theta _{m}},
\end{equation*}%
or, equivalently, $\sum_{j=1}^{m}\theta _{j}f_{j,\sigma _{j}}\leq 0$. As $%
\{A_{j,k}\}_{k\in \I_{j}}$ is a convex combination for each $j$, inequality (%
\ref{NeededBd}) follows directly from this fact. This proves $\dimllc\mu
_{\omega }(x)\geq \underline{\alpha }$ for all $\omega \in \Omega _{1}$, as
claimed.

Similar reasoning establishes the upper bound for $\dimllc\mu _{\omega }(x)$
and that completes the proof that the attainable lower local dimensions lie
in the interval $[\underline{\alpha }$, $\overline{\alpha }]$ for a.a.\ $%
\omega $. The arguments for the upper local dimension are the same and the
only difference with the proof for the local dimension is that we restrict
our attention to $x\in K_{\omega }$ where the local dimension of $\mu
_{\omega }$ at $x$ exists.

Next, we recall that in Proposition \ref{AuxMeas} we saw that for a.a.\ $%
\omega $ there is a probability measure concentrated on $E_{\omega ,\alpha }$
whenever $\alpha =-\beta ^{\prime }(q)$ for some real number $q$. In
particular, $E_{\omega ,\alpha }$ is non-empty for all such $\alpha $ and
thus the set of local dimensions of $\mu _{\omega }$ contains the range of $%
-\beta ^{\prime }(q)$. As $\beta ^{\prime }$ is continuous, Lemma \ref%
{RangeBeta} establishes that the range of $-\beta ^{\prime }$ contains the
interval $(\underline{\alpha }$, $\overline{\alpha })$.

The formula for $\dim _{H}E_{\omega ,\alpha }$, $\dim _{H}\underline{%
E_{\omega ,\alpha }}$ and $\dim _{H}\overline{E_{\omega ,\alpha }}$ for such 
$\alpha $ and a.a.\ $\omega $ were already given in Corollary \ref{DimEalpha}%
.

The endpoints, $\underline{\alpha }$, $\overline{\alpha }$, can also be
easily seen to be attainable local dimensions of $\mu _{\omega }$. To obtain 
$\underline{\alpha }$, for example, choose a minimizing vector $(\sigma
_{1},\dots ,\sigma _{m})$. Each time $\omega _{i}=j$, apply the contraction $%
S_{j,\sigma _{j}}$. Standard probability arguments show that for a.a.\ $%
\omega $ we have $\dimlc\mu _{\omega }(x)=\underline{\alpha }$ for $%
x=\bigcap_{n}S_{\omega _{1},\sigma _{1}}\circ \dots \circ S_{\omega
_{n},\sigma _{n}}(K)\in K_{\omega }$.

Lastly, we check that $\beta $ is convex by proving that $\beta ^{\prime
\prime }\geq 0$. Implicitly differentiating the identity $%
0=\sum_{j=1}^{m}\theta _{j}\log \sum_{k\in \I_{j}}p_{j,k}^{q}r_{j,k}^{\beta
} $ (with $\beta =\beta (q)$) twice and putting $z_{j,k}(q)=\log
p_{j,k}+\beta ^{\prime }(q)\log r_{j,k}$ we obtain

\begin{align*}
& \sum_{j}\theta _{j}\beta ^{\prime \prime }\frac{\left( \sum_{k\in \I%
_{j}}p_{j,k}^{q}r_{j,k}^{\beta }\log r_{j,k}\right) \left( \sum_{k\in \I%
_{j}}p_{j,k}^{q}r_{j,k}^{\beta }\right) }{\left( D_{j}(q)\right) ^{2}} \\
& =\sum_{j}\theta _{j}\left[ \frac{\left( \sum_{k\in \I%
_{j}}p_{j,k}^{q}r_{j,k}^{\beta }z_{j,k}(q)\right) ^{2}-\sum_{k\in \I%
_{j}}p_{j,k}^{q}r_{j,k}^{\beta }\left( z_{j,k}(q)\right) ^{2}\left(
\sum_{k\in \I_{j}}p_{j,k}^{q}r_{j,k}^{\beta }\right) }{\left(
D_{j}(q)\right) ^{2}}\right] .
\end{align*}%
As the coefficients of $\beta ^{\prime \prime }$ in the formula above are
all negative, it will be enough to show 
\begin{equation}
\left( \sum_{k\in \I_{j}}p_{j,k}^{q}r_{j,k}^{\beta }z_{j,k}(q)\right)
^{2}-\sum_{k\in \I_{j}}p_{j,k}^{q}r_{j,k}^{\beta }\left( z_{j,k}(q)\right)
^{2}\left( \sum_{k\in \I_{j}}p_{j,k}^{q}r_{j,k}^{\beta }\right) \leq 0\text{ 
}  \label{beta''Formula}
\end{equation}%
for each $j$ and $q$. Expanding (\ref{beta''Formula}) yields 
\begin{eqnarray*}
&&\sum_{i,k\in \I_{j}}p_{j,k}^{q}r_{j,k}^{\beta
}z_{j,k}p_{j,i}^{q}r_{j,i}^{\beta }z_{j,i}-\sum_{i,k\in \I%
_{j}}p_{j,k}^{q}r_{j,k}^{\beta }p_{j,i}^{q}r_{j,i}^{\beta }z_{j,k}^{2} \\
&=&\sum_{i<k}p_{j,k}^{q}r_{j,k}^{\beta }p_{j,i}^{q}r_{j,i}^{\beta
}(2z_{j,k}z_{j,i}-(z_{j,k}^{2}+z_{j,i}^{2})).
\end{eqnarray*}%
Since $2z_{j,k}(q)z_{j,i}(q)\leq z_{j,k}^{2}(q)+z_{j,i}^{2}(q)$ for each $q$%
, this completes the proof that $\beta ^{\prime \prime }\geq 0$, hence $%
\beta $ is convex.
\end{proof}

\begin{corollary}
For almost all $\omega $ and $\mu _{\omega }$ almost all $x$, $\dimllc\mu
_{\omega }(x)=\dim _{H}K_{\omega }$.
\end{corollary}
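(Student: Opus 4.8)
The plan is to reduce the local dimension to cylinder data via Lemma~\ref{locdim}, to evaluate the $\mu_\omega$-typical value of the resulting ratio by the strong law of large numbers on a skew product, and then to recognise the answer as $\beta(0)=\dim_H K_\omega$.

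\emph{Reduction to cylinders.} If $x\in K_\omega$ has coding $(\sigma_i)$, then by Lemma~\ref{locdim}, together with $\mu_\omega([\sigma_1,\dots,\sigma_n]_\omega)=\prod_{i=1}^n p_{\omega_i,\sigma_i}$ and $|[\sigma_1,\dots,\sigma_n]_\omega|=(\diam K)\prod_{i=1}^n r_{\omega_i,\sigma_i}$, we have
\[
\dimllc\mu_\omega(x)=\liminf_{n\to\infty}\frac{\sum_{i=1}^n\log p_{\omega_i,\sigma_i}}{\sum_{i=1}^n\log r_{\omega_i,\sigma_i}},
\]
the additive constant $\log\diam K$ being asymptotically irrelevant because $\bigl|\sum_{i=1}^n\log r_{\omega_i,\sigma_i}\bigr|\ge n|\log r_{\max}|\to\infty$.

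\emph{Skew product and the strong law.} Work on the probability space carrying the pair $(\omega,\sigma)$, where $\omega$ is chosen according to $\Prob$ and, conditionally on $\omega$, the coding $\sigma=(\sigma_i)$ is chosen according to $\mu_\omega$; by (\ref{randommeas}) the latter is the product of the laws $(p_{\omega_i,k})_{k\in\I_{\omega_i}}$. Under the joint law $\mathbb{Q}$ the pairs $(\omega_i,\sigma_i)_{i\ge1}$ are i.i.d.\ with $\mathbb{Q}\{(\omega_i,\sigma_i)=(j,k)\}=\theta_j p_{j,k}$, so by the strong law of large numbers, $\mathbb{Q}$-a.s.,
\[
\frac1n\sum_{i=1}^n\log p_{\omega_i,\sigma_i}\longrightarrow\sum_{j=1}^m\theta_j\sum_{k\in\I_j}p_{j,k}\log p_{j,k},\qquad \frac1n\sum_{i=1}^n\log r_{\omega_i,\sigma_i}\longrightarrow\sum_{j=1}^m\theta_j\sum_{k\in\I_j}p_{j,k}\log r_{j,k},
\]
the second limit being strictly negative. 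Hence the quotient in the previous display converges, so its $\liminf$ is an honest limit, and Fubini's theorem upgrades this to: for $\Prob$-a.a.\ $\omega$ and $\mu_\omega$-a.a.\ $x$,
\[
\dimllc\mu_\omega(x)=\dimlc\mu_\omega(x)=\frac{\sum_{j=1}^m\theta_j\sum_{k\in\I_j}p_{j,k}\log p_{j,k}}{\sum_{j=1}^m\theta_j\sum_{k\in\I_j}p_{j,k}\log r_{j,k}}.
\]

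\emph{Identification with $\dim_H K_\omega$, and the main obstacle.} It remains to identify this quotient with $\beta(0)$, which by (\ref{DefBeta2}) is characterised by $\sum_{j=1}^m\theta_j\log\sum_{k\in\I_j}r_{j,k}^{\beta(0)}=0$ and which, as observed after (\ref{DefBeta2}), is the a.s.\ Hausdorff dimension of $K_\omega$. This is the step I expect to require the most care: one has to feed the defining relation for $\beta$ at $q=0$ into the quotient, the natural tool being Jensen's inequality applied to the probability vector $(p_{j,k})_{k\in\I_j}$ against the weights $r_{j,k}^{\beta(0)}/D_j(0)$ coming from (\ref{Dq}). A minor technical point is that the disintegration and the Fubini step should be set up with a little care since the index sets $\I_{\omega_i}$ vary with $\omega$, but all the quantities above are manifestly measurable functions of $(\omega,\sigma)$. (An alternative route that sidesteps the strong law altogether is to apply Corollary~\ref{vLocDim} with $q=0$, which yields $\dimlc\nu_{\omega,0}(x)=\beta(0)$ for \emph{every} $x\in K_\omega$ and $\Prob$-a.a.\ $\omega$.)
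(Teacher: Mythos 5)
Your reduction to cylinders and the skew-product strong law are correct, and they do prove that for $\Prob$-a.a.\ $\omega$ and $\mu _{\omega }$-a.a.\ $x$,
\begin{equation*}
\dimllc\mu _{\omega }(x)=\frac{\sum_{j=1}^{m}\theta _{j}\sum_{k\in \I_{j}}p_{j,k}\log p_{j,k}}{\sum_{j=1}^{m}\theta _{j}\sum_{k\in \I_{j}}p_{j,k}\log r_{j,k}},
\end{equation*}
which, since $\beta (1)=0$ and $D_{j}(1)=1$, is precisely $-\beta ^{\prime }(1)$. The genuine gap is the final identification of this quotient with $\beta (0)$: that step is not merely delicate, it fails for general probabilities. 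Writing $s=\beta (0)$, Gibbs'/Jensen's inequality gives $\sum_{k\in \I_{j}}p_{j,k}\log \bigl(p_{j,k}/r_{j,k}^{s}\bigr)\geq -\log \sum_{k\in \I_{j}}r_{j,k}^{s}$ for each $j$, and averaging against $\theta _{j}$ and using (\ref{DefBeta2}) at $q=0$ yields only $\dimllc\mu _{\omega }(x)\leq \beta (0)$ a.e., with equality if and only if $p_{j,k}=r_{j,k}^{\beta (0)}/\sum_{\ell \in \I_{j}}r_{j,\ell }^{\beta (0)}$ for all $j,k$. (Already for a single IFS this is visible: the Cantor measure with weights $(1/3,2/3)$ and ratios $(1/3,1/3)$ has a.e.\ local dimension strictly below $\log 2/\log 3$.) Your fallback does not repair this: Corollary \ref{vLocDim} at $q=0$ computes $\dimlc\nu _{\omega ,0}(x)=\beta (0)$, a statement about the auxiliary measure $\nu _{\omega ,0}$, which is a different measure from $\mu _{\omega }$ except in the natural-weights case just described.

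The paper's own proof does not pass through the $\mu _{\omega }$-generic SLLN exponent at all: it works at $q=0$ with the Legendre transform, using that $f_{\omega }(\alpha )=\inf_{q}(q\alpha +\beta (q))$ attains its maximum value $\beta (0)$ at $\alpha =-\beta ^{\prime }(0)$, invoking the mass distribution principle for the auxiliary measure of Proposition \ref{AuxMeas}, and then identifying $\beta (0)=\dim _{H}K_{\omega }$ a.s.\ via (\ref{Hdim}) and (\ref{DefBeta2}). Your (correct) computation instead pinpoints the $\mu _{\omega }$-typical exponent as $-\beta ^{\prime }(1)$, the point where $f_{\omega }(\alpha )=\alpha $; this coincides with $\beta (0)=\dim _{H}K_{\omega }$ only under the natural-weights condition above. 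So, as written, your argument can deliver the asserted equality only in that special case, and the concluding step of your plan cannot be completed in general.
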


\begin{proof}
The strict convexity of $\beta $ proves there is a unique maximum value of
the function $f_{\omega }(\alpha )=\dim _{H}E_{\omega ,\alpha
}=\inf_{q}(q\alpha +\beta (q))$. By differentiating one can check this
occurs at $q=0$, $\alpha =-\beta ^{\prime }(0)$. By the mass distribution
principle, $\beta (0)=f_{\omega }(\alpha )=\dim _{H}E_{\omega ,\alpha
}=\alpha $ when $\alpha =-\beta ^{\prime }(0)$. But according to formulas (%
\ref{Hdim}) and (\ref{DefBeta2}), $\beta (0)=\dim _{H}K_{\omega }$ for a.a.\ 
$\omega$.
\end{proof}

\section{The Random Finite Type Condition}

\label{sec:equicontractivefinitetype}

\subsection{Finite type}

In the previous section we concerned ourselves with the multifractal
spectrum under the assumption of the uniform strong separation condition.
For the remainder of the paper, we will relax that assumption and
investigate the random analogue of the finite type condition.

For this, we will restrict our attention to random homogeneous iterated
function systems $(\fS,\Prob)$ acting on $\BbR$ that are equicontractive,
that is, there is some $0<r<1$ such that each contraction is of the form 
\begin{equation*}
S_{j,k}(x)=rx+d_{j,k}.
\end{equation*}
We will also assume that for all $\omega \in \Omega$ that the convex hull of
the random attractor $K_\omega$ is $[0,1]$. Equivalently for each $j$ the
convex hull of $K_j$ associated to $\fS_j$ is $[0,1]$. In particular, $%
S_{j,k}([0,1])\subseteq \lbrack 0,1]$ for all $j,k$ and for each $j$ there
is some $k,\ell \in \I_{j}$ such that $S_{j,k}(0)=0$ and $S_{j,\ell }(1)=1$.

To ease notation, given $\omega \in \Omega $ and a finite coding $\sigma
=(\sigma _{1},\sigma _{2},\dots ,\sigma _{n})\in \Lambda _{\omega ,n}$ we
concisely write $S_{\omega ,\sigma }$ for the composition 
\begin{equation*}
S_{\omega ,\sigma }=S_{\omega _{1},\sigma _{1}}\circ S_{\omega _{2},\sigma
_{2}}\circ \dots S_{\omega _{n},\sigma _{n}}.
\end{equation*}
Note that the equicontractive and convex hull assumptions ensure that $%
S_{\omega ,\sigma }(K_{\omega })$ has diameter $r^{n}$ whenever $\sigma $ is
of length $n$.

The notion of finite type was originally introduced by Ngai and Wang in \cite%
{NW} for a single IFS. Here we extend the definition to random IFS.

\begin{definition}
\label{defn:finite type} Let $\fS=\{\fS_{1},\dots ,\fS_{m}\}$ be an
equicontractive RIFS with contraction ratio $r$. We say that the RIFS is of 
\textbf{finite type} if there exists a finite set $F$ such that for all
choices of $n\in \mathbb{N}$ , $\omega \in \Omega $ and $\sigma ,\tau \in
\Lambda _{\omega ,n}$ we have 
\begin{equation*}
\left\vert S_{\omega ,\sigma }(0)-S_{\omega ,\tau }(0)\right\vert r^{-n}>%
\diam K_{\omega }=1
\end{equation*}
or 
\begin{equation*}
(S_{\omega ,\sigma }(0)-S_{\omega ,\tau }(0))r^{-n}\in F.
\end{equation*}
\end{definition}

Clearly, a RIFS that satisfies the uniform strong separation condition is of
finite type. However, the converse is not true as Example \ref{ExFT} shows.

\subsection{Net intervals, characteristic vectors and symbolic
representations}

As explained in \cite{F3,F1,F2,HHM}, an iterated function system of finite
type generates a geometric structure that has useful properties for studying
the local dimension theory of the associated self-similar measures. Our
first step is to extend these structures to the random setting.

\begin{definition}
Let $\fS$ be a random iterated function system and let $\omega \in \Omega $.
For each positive integer $n$, let $h_{1},\dots ,h_{s_{n}}$ be the elements
of the set 
\begin{equation*}
\{S_{\omega ,\sigma }(z)\;:\;z\in \{0,1\}\text{ and }\sigma \in \Lambda
_{\omega ,n}\},
\end{equation*}
listed in increasing order. Put 
\begin{equation*}
\F_{\omega ,n}=\{[h_{i},h_{i+1}]:1\leq i\leq s_{n}-1\text{ and }
(h_{i},h_{i+1})\cap K_{\omega }\neq \emptyset \}\text{.}
\end{equation*}
Elements of $\F_{\omega ,n}$ will be called $\omega $-\textbf{net intervals
of level $n$}. The interval $[0,1]$ is understood to be the only net
interval of level $0$.
\end{definition}

Let $\omega \in \Omega $. For each $\Delta \in \F_{\omega ,n}$, where $n\geq
1$, there exists a unique element $\widehat{\Delta }\in \F_{\omega ,n-1}$
which contains $\Delta $, called the \textit{parent} (of \textit{child} $%
\Delta )$. Given $\Delta =[a,b]\in \F_{\omega ,n}$, we denote the \textit{%
normalized length} of $\Delta $ by 
\begin{equation*}
\ell _{n}(\Delta )=r^{-n}(b-a).
\end{equation*}
By the $\omega $-\textit{neighbour set} of $\Delta $ we mean the ordered
tuple 
\begin{equation*}
V^{\omega}_{n}(\Delta )=(a_{1},a_{2},\dots ,a_{J}),
\end{equation*}
where $a_{i}<a_{i+1}$ and there is some $\sigma \in \Lambda _{\omega ,n}$
such that $r^{-n}(a-S_{\omega ,\sigma }(0))=a_{i}$ for each $i$.
Equivalently, there exists $\sigma \in \Lambda _{\omega ,n}$ such that $%
S_{\omega ,\sigma }(x)=r^{n}(x-a_{i})+a$.

Suppose $\Delta \in \F_{\omega ,n}$ has parent $\widehat{\Delta }$. It is
possible for $\widehat{\Delta }$ to have multiple children with the same
normalized length and the same $\omega $-neighbour set as $\Delta $. Order
these equivalent children from left to right as $\Delta _{1},\Delta
_{2},\dots ,\Delta _{k}$. We denote by $t_{n}(\Delta )$ the integer $t$ such
that $\Delta _{t}=\Delta $.

\begin{definition}
The $\omega $-\textbf{characteristic vector} \textit{of} $\Delta \in \F%
_{\omega ,n}$ is defined to be the triple 
\begin{equation*}
\C_{n}^{\omega }(\Delta )=(\ell _{n}(\Delta ),V^{\omega}_{n}(\Delta
),t_{n}(\Delta )).
\end{equation*}
\end{definition}

Similar to the case of a single IFS, the normalized length and neighbour set
of any child of the $\omega $-net interval $\Delta $ of level $n$ depend
only on the normalized length and neighbour set of $\Delta $ and the
contractions of the IFS $\fS\omega _{n+1}$.

By the $\omega $-\textit{symbolic representation} of a net interval $\Delta
\in \F_{\omega ,n}$ we mean the $n+1$ tuple $(\C_{0}^{\omega }(\Delta
_{0}),\dots ,\C_{n}^{\omega }(\Delta _{n}))$ where $\Delta _{0}=[0,1]$, $%
\Delta _{n}=\Delta $, and for each $j=1,\dots ,n$, $\Delta _{j-1}$ is the
parent of $\Delta _{j}$. Similarly, for each $x\in K_{\omega }$ the $\omega $%
-\textit{symbolic representation }of $x$ will be the sequence of
characteristic vectors 
\begin{equation*}
\lbrack x]_{\omega }=(\C_{0}^{\omega }(\Delta _{0}),\C_{1}^{\omega }(\Delta
_{1}),\dots )
\end{equation*}
where $x\in \Delta _{n}\in \F_{\omega ,n}$ for each $n$ and $\Delta
_{j-1}\in \F_{\omega ,n-1}$ is the parent of $\Delta _{j}$. The symbolic
representation uniquely determines $x$ and is unique unless $x$ is the
endpoint of some net interval, in which case there can be two different
symbolic representations. We will write $\Delta_{\omega,n}(x)$ for an $%
\omega $-net interval of level $n$ containing $x$.

\subsection{A characterization of finite type}

As with a single IFS, finite type is characterized by the property that
there are only finitely many characteristic vectors.

\begin{theorem}
An equicontractive RIFS is of finite type if and only if there are only
finitely many characteristic vectors (taken over all choices of $\omega $).
\end{theorem}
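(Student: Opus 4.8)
The plan is to prove both implications, following the deterministic template of \cite{F3,F1,F2,HHM} but being careful about the uniformity over $\omega$. For the easy direction, suppose there are only finitely many characteristic vectors. Each characteristic vector $\C_n^\omega(\Delta)$ records the normalized length $\ell_n(\Delta)$ and the neighbour set $V_n^\omega(\Delta)=(a_1,\dots,a_J)$. The entries $a_i$ of any neighbour set are of the form $r^{-n}(a-S_{\omega,\sigma}(0))$ where $a$ is a left endpoint of a net interval. If two codings $\sigma,\tau\in\Lambda_{\omega,n}$ have $S_{\omega,\sigma}$ and $S_{\omega,\tau}$ mapping into overlapping net intervals (equivalently $|S_{\omega,\sigma}(0)-S_{\omega,\tau}(0)|r^{-n}\le 1$), then the quantity $(S_{\omega,\sigma}(0)-S_{\omega,\tau}(0))r^{-n}$ is a difference of two entries in the neighbour set of a common net interval $\Delta\in\F_{\omega,n}$ (those two codings each realize $\Delta$ as a descendant in the sense of the neighbour-set definition). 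Since there are only finitely many neighbour sets among all characteristic vectors, the set of all such differences is finite; take $F$ to be this finite set. Then Definition \ref{defn:finite type} is satisfied, so the RIFS is of finite type.

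For the converse, assume the RIFS is of finite type with finite set $F$ as in Definition \ref{defn:finite type}. I want to bound the number of possible characteristic vectors. First, normalized lengths: a net interval $\Delta=[h_i,h_{i+1}]$ has endpoints drawn from $\{S_{\omega,\sigma}(z): z\in\{0,1\},\sigma\in\Lambda_{\omega,n}\}$, so $\ell_n(\Delta)=r^{-n}(h_{i+1}-h_i)$ is a difference of two numbers of the form $r^{-n}(S_{\omega,\sigma}(z)-S_{\omega,\sigma'}(z'))$ with $z,z'\in\{0,1\}$. Writing $S_{j,k}(x)=rx+d_{j,k}$ and using $S_{j,k}(0)=d_{j,k}$, $S_{j,k}(1)=r+d_{j,k}$, each such difference equals $\pm r^{\cdot}+ (S_{\omega,\sigma}(0)-S_{\omega,\sigma'}(0))r^{-n}$; by the finite type hypothesis (and since $\Delta$ is contained in $[0,1]$, so the codings involved are close enough to be in the alternative), the latter lies in $F$, hence $\ell_n(\Delta)$ takes only finitely many values. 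Second, neighbour sets: the cardinality $J$ of $V_n^\omega(\Delta)$ is bounded because all $\sigma\in\Lambda_{\omega,n}$ with $S_{\omega,\sigma}(K_\omega)\supseteq\Delta$ satisfy $|S_{\omega,\sigma}(0)-S_{\omega,\sigma'}(0)|r^{-n}\le 1$ pairwise (they all meet the interval $\Delta$ of normalized length $\le 1$), so their pairwise differences lie in the finite set $F$; since $F$ has bounded diameter and is separated from $0$ except at $0$ itself, only boundedly many such translates fit, giving a uniform bound on $J$. The entries themselves, being differences of a fixed endpoint with the $S_{\omega,\sigma}(0)$, lie in $-F\cup F$ up to a shift, so there are finitely many possible tuples $V_n^\omega(\Delta)$. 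Finally, the multiplicity index $t_n(\Delta)$ is bounded by the maximum number of children of a net interval sharing the same normalized length and neighbour set; since a parent has normalized length $\le 1$ and each child has normalized length bounded below by $r\,r_{\min}$-type constant (here $r$ is fixed, and there are finitely many possible child lengths all bounded below by a positive constant depending only on $F$), the number of children is uniformly bounded, hence so is $t_n$. Combining the three finiteness statements gives finitely many characteristic vectors.

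The main obstacle I expect is the bookkeeping in the converse that turns "all relevant pairwise normalized differences lie in $F$" into genuine uniform bounds on $J$ and on the multiplicity $t_n$ — one must argue carefully that every coding contributing to a given net interval's neighbour set is forced into the $F$-alternative of Definition \ref{defn:finite type} rather than the $>1$ alternative, and that the net-interval endpoints themselves are controlled. This is exactly where the equicontractive assumption and the normalization $\operatorname{conv}K_\omega=[0,1]$ (so $\diam K_\omega=1$ uniformly in $\omega$) do the work: they make the threshold in Definition \ref{defn:finite type} the same constant $1$ for every $\omega$, so the single finite set $F$ controls all levels and all $\omega$ simultaneously. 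Once that uniformity is pinned down, the counting arguments are routine adaptations of the single-IFS case in \cite{HHM}, and I would cite that correspondence rather than reproving each estimate.
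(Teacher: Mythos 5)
Your easy direction (finitely many characteristic vectors implies finite type) is close to what is intended, but the converse --- finite type implies finitely many characteristic vectors --- has a genuine gap, and it sits exactly where the paper's proof does its real work. You claim that for a net interval $\Delta=[h_i,h_{i+1}]\in\F_{\omega,n}$ with $h_i=S_{\omega,\sigma'}(z')$, $h_{i+1}=S_{\omega,\sigma}(z)$, the quantity $r^{-n}(S_{\omega,\sigma}(0)-S_{\omega,\sigma'}(0))$ lies in $F$ because ``the codings involved are close enough to be in the alternative,'' citing only $\Delta\subseteq[0,1]$. That justification is insufficient: Definition \ref{defn:finite type} only forces membership in $F$ when the normalized difference is at most $1$, and $\Delta\subseteq[0,1]$ does not give this. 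Worse, in the case $z'=1$, $z=0$ (left endpoint of $\Delta$ is a right image-endpoint, right endpoint is a left image-endpoint) the images $S_{\omega,\sigma'}([0,1])$ and $S_{\omega,\sigma}([0,1])$ are disjoint, so $r^{-n}(S_{\omega,\sigma}(0)-S_{\omega,\sigma'}(0))=\ell_n(\Delta)+1>1$; the finite type condition then says nothing about this pair, and your assertion is false as stated. The missing ingredient is the defining property of net intervals that $(h_i,h_{i+1})\cap K_\omega\neq\emptyset$: any point of $K_\omega$ in the interior lies in some image $S_{\omega,\eta}([0,1])$ with $\eta\in\Lambda_{\omega,n}$, and since the endpoints of that image cannot lie strictly between the consecutive points $h_i,h_{i+1}$, one gets $S_{\omega,\eta}(0)\le h_i<h_{i+1}\le S_{\omega,\eta}(1)$. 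This covering coding simultaneously shows $\ell_n(\Delta)\le 1$ (which you also need in the $z=z'$ case) and lets you apply the finite type dichotomy to the pairs $(\eta,\sigma)$ and $(\eta,\sigma')$, whose normalized differences are at most $1$; then $\ell_n(\Delta)$ is determined by two elements of $F$ up to an integer shift, giving finiteness. This is precisely the device in the paper's third case, and your proof does not contain it.

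Two smaller remarks. In your easy direction you assert that two codings with normalized difference at most $1$ are neighbours of a common net interval; this also needs an argument --- one must check that the interior of the overlap meets $K_\omega$ (points of $S_{\omega,\sigma}(K_{\pi^{n}(\omega)})$ accumulate at the overlap's right endpoint since the attractors are perfect), and the degenerate case of normalized difference exactly $1$ should simply be absorbed into $F$. On the positive side, your bound on the multiplicity index $t_n(\Delta)$ via a positive lower bound on the (finitely many) child lengths addresses a point the paper's proof passes over silently, and it is correct once the finiteness of normalized lengths has been properly established.
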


\begin{proof}
First, assume that the RIFS is of finite type. Let $\Delta =[a,b]$ be an $%
\omega $-net interval of level $n$. From the definition, we see that there
exist contractions $S_{\omega ,\sigma }$ and $S_{\omega ,\tau }$ with $%
\sigma ,\tau \in \Lambda _{\omega ,n}$, and $c,d\in \{0,1\}$ such that $%
a=S_{\omega ,\sigma }(c)$ and $b=S_{\omega ,\tau }(d)$. If $c=d$, then $%
r^{-n}(b-a)\in F$, of which there are only finitely many choices. If $c=0$
and $d=1$, then we see that $r^{-n}(S_{\omega ,\sigma }(1)-S_{\omega ,\tau
}(1))\in F$, again of which there are only finitely many choices. Lastly, if 
$c=1$ and $d=0$ there exists some $\eta \in \Lambda _{\omega ,n}$ such that $%
S_{\omega ,\eta }(0)<S_{\omega ,\sigma }(1)<S_{\omega ,\tau }(0)<S_{\omega
,\eta }(1)$, for otherwise $(a,b)\cap K_{\omega }=\emptyset $. There are
only finitely many choices for $r^{-n}(S_{\omega ,\tau }(0)-S_{\omega ,\eta
}(0))$ and $r^{-n}(S_{\omega ,\eta }(1)-S_{\omega ,\sigma }(1))$, and
further, $r^{-n}(S_{\omega ,\eta }(1)-S_{\omega ,\eta }(0))$ is fixed. This
shows that there are only a finite number of choices for $\ell _{n}(\Delta )$%
.

By a similar logic to the above, we see that there are only a finite number
of $\eta \in \Lambda _{\omega ,n}$ such that $S_{\omega ,\eta }(0)\leq
a<b\leq S_{\omega ,\eta }(1)$ and this proves there are only a finite number
of possible neighbour sets. It follows that there are only a finite number
of characteristic vectors.

The other direction is similar.
\end{proof}

Since one choice of $\omega $ is the constant sequence, it is immediate that
if $\fS=\{\fS_{1},\fS_{2},\dots ,\fS_{m}\}$ is of finite type, then so is
each individual $\fS_{j}$. We do not know whether the converse holds.

We provide a sufficient condition for finite type in the proposition below.

\begin{proposition}
\label{prop:4.6} Let $\fS=\{\fS_{1},\fS_{2},\dots ,\fS_{m}\}$ be a RIFS
where $\fS_{j}=\{S_{j,k}:k\in \I_{j}\}$. If the IFS consisting of all the
contractions $\{S_{j,k}:k\in \I_{j},j=1,\dots,m\}$ is of finite type, then
so is $\fS$.
\end{proposition}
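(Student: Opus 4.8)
The plan is to use the fact that every level-$n$ cylinder map of the random system $\fS$ is, as a map of $\mathbb{R}$, also a level-$n$ cylinder map of the single (deterministic) IFS $\mathcal{T}=\{S_{j,k}:k\in\I_j,\ j=1,\dots,m\}$ that we are assuming is of finite type. The finite set witnessing finite type for $\mathcal{T}$ will then witness finite type for $\fS$, with the same threshold, so there is essentially nothing more to do.

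Concretely, I would proceed as follows. First, note that $\mathcal{T}$ is equicontractive with the same ratio $r$ and that its attractor $K_{\mathcal{T}}$ has convex hull $[0,1]$: all the $S_{j,k}$ map $[0,1]$ into itself, so $K_{\mathcal{T}}\subseteq[0,1]$, while choosing (as the standing hypotheses allow) some $j$ and indices $k,\ell\in\I_j$ with $S_{j,k}(0)=0$ and $S_{j,\ell}(1)=1$ and iterating these contractions starting from a point of $K_{\mathcal{T}}$ shows $0,1\in K_{\mathcal{T}}$ by closedness and invariance. Hence $\diam K_{\mathcal{T}}=1=\diam K_\omega$ for every $\omega$, so the finite type inequalities of Definition \ref{defn:finite type} for $\mathcal{T}$ and for $\fS$ carry the same right-hand side. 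Next, fix $\omega\in\Omega$, $n\in\mathbb{N}$ and $\sigma,\tau\in\Lambda_{\omega,n}$. Since $\sigma_i\in\I_{\omega_i}$, each $S_{\omega_i,\sigma_i}$ is one of the maps of $\mathcal{T}$, so $S_{\omega,\sigma}=S_{\omega_1,\sigma_1}\circ\dots\circ S_{\omega_n,\sigma_n}$ equals $S_u$ for some word $u$ of length $n$ over the index set of $\mathcal{T}$, and likewise $S_{\omega,\tau}=S_v$. Letting $F_0$ be the finite set witnessing finite type for $\mathcal{T}$, we conclude that either $|S_{\omega,\sigma}(0)-S_{\omega,\tau}(0)|\,r^{-n}>1$ or $(S_{\omega,\sigma}(0)-S_{\omega,\tau}(0))\,r^{-n}\in F_0$. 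As $\omega,n,\sigma,\tau$ were arbitrary, $\fS$ is of finite type with $F=F_0$.

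I do not expect a genuine obstacle here. The only point requiring an explicit (and short) argument is that $\diam K_{\mathcal{T}}=1$, so that the constant appearing in Definition \ref{defn:finite type} applied to $\mathcal{T}$ matches the constant $\diam K_\omega=1$ needed for $\fS$; beyond that, the proof is just the bookkeeping remark that a $\Lambda_{\omega,n}$-coding is, once $\omega$ is forgotten, an ordinary length-$n$ coding of the combined IFS $\mathcal{T}$.
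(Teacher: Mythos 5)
Your proof is correct and takes essentially the same route as the paper: both arguments simply observe that every cylinder map $S_{\omega,\sigma}$ of the RIFS is a length-$n$ cylinder map of the combined IFS $\{S_{j,k}:k\in \I_{j},\,j=1,\dots,m\}$, so the finite set witnessing finite type for that single IFS also witnesses Definition \ref{defn:finite type} for $\fS$. Your additional check that the combined IFS has attractor with convex hull $[0,1]$, so that the threshold $\diam K_{\omega}=1$ matches, is a detail the paper's terser proof leaves implicit.
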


\begin{proof}
For any $\omega \in \Omega $ and $\sigma $, $\tau \in \Lambda _{\omega ,n}$,
the points $S_{\omega ,\sigma }(0)$ and $S_{\omega ,\tau }(0)$ are images of 
$0$ under contractions of the IFS $\{S_{j,k}:k\in \I_{j},j=1,\dots,m\}$. As
this IFS is of finite type, these images are either sufficiently far apart,
or their normalized difference belongs to a finite set. Hence there are only
a finite number of normalized `large' differences for $S_{\omega ,\sigma
}(0) $ and $S_{\omega ,\tau }(0)$. This proves the result.
\end{proof}

\begin{example}
\label{ExFT} Let $\rho$ be a Pisot number, that is a real algebraic integer
greater than 1, all of whose conjugates are strictly less than 1 in modulus.
Let $\fS_j$ consist of contractions $S_{j,k} = x/\rho + \beta_{j,k}$ where $%
\beta_{j,k} \in \mathbb{Q}(\rho)$, with $k = 0, \dots, \mathcal{N}(j) $.
Then by \cite[Theorem 2.9]{NW} we have that the union of all $S_{j,k}$
satisfies the finite type condition. Hence the RIFS $\fS=\{\fS_{1},\fS%
_{2},\dots ,\fS_{m}\}$ is of finite type. In this case the individual $\fS%
_{j}$ do not need to satisfy the strong separation condition.
\end{example}

\subsection{Local Dimensions of measures of finite type}

\label{sec:local}

Recall that we assume that the convex hull of each $K_{\omega }$ is $[0,1]$.

Provided the local dimension exists at some $x\in K_{\omega }=\supp\mu
_{\omega }$, it is easy to see we have the formula 
\begin{equation}
\dimlc\mu _{\omega }(x)=\lim_{n\rightarrow \infty }\frac{\log \mu _{\omega
}([x-r^{n},x+r^{n}])}{n\log r}\text{ }  \label{eq:locdim}
\end{equation}
and similarly for upper and lower local dimensions.

In the case of a single IFS satisfying the finite type condition, it was
shown in \cite{HHM, HHN} that the local dimension can be computed using net
intervals and what are known as transition matrices. Similar statements can
be made in the random setting, as we now explain. We first note some
additional notation and facts about $\mu _{\omega }$-measures of net
intervals.

Given $\omega \in \Omega $ and $\sigma =(\alpha _{1},\dots,\sigma _{n})\in
\Lambda _{\omega ,n}$ we put 
\begin{equation*}
p_{\omega ,\sigma }=\prod_{i=1}^{n}p_{\omega _{i},\sigma _{i}}.
\end{equation*}

We write $f_{n}\sim g_{n}$ when we mean there are positive constants $%
c_{1},c_{2}$ such that 
\begin{equation*}
f_{n}\leq c_{1}g_{n}\leq c_{2}f_{n},
\end{equation*}
for all $n$.

\begin{lemma}
Let $\omega \in \Omega $ and suppose $\Delta =[a,b]\in \F_{\omega ,n}$ has
normalized length $\ell _{n}(\Delta )$ and $\omega $-neighbour set $%
V^{\omega}_{n}(\Delta )=(a_{1},\dots,a_{J})$. Then, 
\begin{equation}
\mu _{\omega }(\Delta )=\sum_{i=1}^{J}\mu ([a_{i},a_{i}+l_{n}(\Delta ]) 
\hspace{-0.5cm}\sum _{\substack{ \sigma \in \Lambda _{\omega ,n}  \\ %
r^{-n}(a-S_{\omega ,\sigma }(0))=a_{i}}} \hspace{-0.6cm}p_{\omega ,\sigma
}\quad\sim\;\;P_{\omega ,n}(\Delta ),  \label{measDelta}
\end{equation}
where $P_{\omega ,n}(\Delta )=\sum_{i=1}^{J}P_{\omega ,n}^{i}(\Delta )$ and
for $i=1,\dots,J$, 
\begin{equation*}
P_{\omega ,n}^{i}(\Delta )=\hspace{-0.5cm}\sum_{\substack{ \sigma \in
\Lambda _{\omega ,n}  \\ r^{-n}(a-S_{\omega ,\sigma }(0))=a_{i}}}\hspace{%
-0.5cm}p_{\omega ,\sigma }\text{ }.
\end{equation*}
\end{lemma}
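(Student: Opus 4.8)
The plan is to compute $\mu_\omega(\Delta)$ directly from the defining relation \eqref{randommeas} for the random self-similar measure, tracking which codings $\sigma \in \Lambda_{\omega,n}$ contribute to the net interval $\Delta = [a,b]$. First I would unwind \eqref{randommeas} $n$ times to get that for any Borel set $E$,
\begin{equation*}
\mu_\omega(E) = \sum_{\sigma \in \Lambda_{\omega,n}} p_{\omega,\sigma}\, \mu_{\pi^n\omega}\bigl(S_{\omega,\sigma}^{-1}(E)\bigr),
\end{equation*}
where $\pi^n\omega = (\omega_{n+1},\omega_{n+2},\dots)$ and $p_{\omega,\sigma} = \prod_{i=1}^n p_{\omega_i,\sigma_i}$. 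Applying this with $E = \Delta$, each term involves $\mu_{\pi^n\omega}\bigl(S_{\omega,\sigma}^{-1}([a,b])\bigr)$. Since $S_{\omega,\sigma}(x) = r^n x + S_{\omega,\sigma}(0)$, writing $a_i = r^{-n}(a - S_{\omega,\sigma}(0))$ (which by definition of the neighbour set $V^\omega_n(\Delta)$ must be one of $a_1,\dots,a_J$ whenever $S_{\omega,\sigma}(K_\omega) \cap \Delta$ has nonempty interior), we get $S_{\omega,\sigma}^{-1}([a,b]) = [a_i, a_i + \ell_n(\Delta)]$. Grouping the codings $\sigma$ by the value $a_i$ they produce yields exactly the first equality in \eqref{measDelta}.

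Next I would address the $\sim$ comparison with $P_{\omega,n}(\Delta)$. The point is that the factors $\mu_{\pi^n\omega}([a_i, a_i + \ell_n(\Delta)])$, which depend on the "tail" $\pi^n\omega$, are bounded above and below by constants independent of $\omega$, $n$, $\Delta$, and $i$. The upper bound is trivial since $\mu_{\pi^n\omega}$ is a probability measure, so $\mu_{\pi^n\omega}([a_i, a_i + \ell_n(\Delta)]) \le 1$, giving $\mu_\omega(\Delta) \le P_{\omega,n}(\Delta)$. For the lower bound one needs a uniform positive lower bound on $\mu_{\pi^n\omega}([a_i, a_i+\ell_n(\Delta)])$ over all relevant configurations. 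Here I would invoke finiteness: since the RIFS is of finite type there are only finitely many characteristic vectors, hence only finitely many possible normalized lengths $\ell_n(\Delta)$ and neighbour positions $a_i$; moreover the condition $(a_i, a_i + \ell_n(\Delta)) \cap K_{\pi^n\omega}$ — which holds because $(h_i,h_{i+1})\cap K_\omega \ne \emptyset$ in the definition of $\F_{\omega,n}$ pulls back under $S_{\omega,\sigma}$ — together with the equicontractive/convex-hull normalization forces $\mu_{\pi^n\omega}$ to assign some mass to this interval. Because $K_{\pi^n\omega}$ always has convex hull $[0,1]$ and there are only finitely many tail-types of IFS up to the first coordinate (the measure of an interval meeting $K_{\pi^n\omega}$ is controlled by the first few coordinates $\omega_{n+1}, \omega_{n+2},\dots$ via \eqref{randommeas} and the lower bound $p_{\min} = \min_{j,k} p_{j,k} > 0$), one obtains a constant $c > 0$ with $\mu_{\pi^n\omega}([a_i, a_i + \ell_n(\Delta)]) \ge c$ for every term that genuinely appears. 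This yields $\mu_\omega(\Delta) \ge c\, P_{\omega,n}(\Delta)$, completing the $\sim$ estimate.

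The main obstacle I anticipate is making the uniform lower bound $\mu_{\pi^n\omega}([a_i, a_i+\ell_n(\Delta)]) \ge c$ fully rigorous: one must argue that an interval of one of finitely many normalized lengths, positioned so as to meet the attractor, cannot have arbitrarily small measure. The cleanest route is probably to note that such an interval, being of length $\ell_n(\Delta) \ge \text{(some minimal positive value over the finite set of characteristic vectors)}$ and meeting $K_{\pi^n\omega}$, must contain an entire level-$N$ net interval of $K_{\pi^n\omega}$ for some bounded $N$ (bounded because $r^N$ must drop below the minimal length), and every nonempty level-$N$ net interval carries $\mu_{\pi^n\omega}$-mass at least $p_{\min}^N > 0$ by \eqref{randommeas}. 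Since $N$ is bounded uniformly, $c = p_{\min}^N$ works. I would state this as a short lemma or fold it into the proof, citing the finite-type characterization theorem proved just above for the finiteness of the relevant geometric data. The remaining manipulations — regrouping sums, pulling out $p_{\omega,\sigma}$, identifying $P^i_{\omega,n}(\Delta)$ — are routine bookkeeping.
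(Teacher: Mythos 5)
Your overall route is the same one the paper intends: its entire proof is the citation to \cite[Section 3]{HHM}, and the argument there is exactly your decomposition --- iterate \eqref{randommeas} $n$ times, group the codings $\sigma\in\Lambda_{\omega,n}$ by the normalized position $a_i=r^{-n}(a-S_{\omega,\sigma}(0))$, get the exact identity, take the trivial upper bound $\mu_{\pi^n\omega}(\cdot)\le 1$, and then prove a uniform positive lower bound on the factors $\mu_{\pi^n\omega}([a_i,a_i+\ell_n(\Delta)])$ using finite type. The exact identity and the upper bound in your write-up are fine (modulo the routine observation that non-neighbour codings contribute nothing and that endpoints carry no mass).

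The genuine gap is in the uniform lower bound, in two places. First, your justification that $(a_i,a_i+\ell_n(\Delta))\cap K_{\pi^n\omega}\neq\emptyset$ ``because $(h_i,h_{i+1})\cap K_\omega\neq\emptyset$ pulls back under $S_{\omega,\sigma}$'' only works for \emph{one} neighbour: a point of $K_\omega$ in the interior of $\Delta$ lies in $S_{\omega,\tau}(K_{\pi^n\omega})$ for \emph{some} coding $\tau$, and only that $\tau$'s normalized interval is guaranteed to meet the tail attractor. For another neighbour $\sigma$ (one with $\Delta\subseteq S_{\omega,\sigma}([0,1])$ but whose copy $S_{\omega,\sigma}(K_{\pi^n\omega})$ misses the interior of $\Delta$) the factor $\mu_{\pi^n\omega}([a_i,a_i+\ell_n(\Delta)])$ can be $0$; note that in this paper the supports need not be all of $[0,1]$ (see the example in Section 6.3), so this is not a vacuous worry, and then the per-neighbour bound $\ge c$ you want is simply false. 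To rescue the comparability one must either restrict the neighbour set to codings whose copy of the attractor meets $\mathrm{int}\,\Delta$ (as is done in the non-full-support sequel \cite{HHN}), or show the corresponding $P^i_{\omega,n}$ are dominated by the remaining ones; your proof does neither. Second, even for a neighbour whose interval does meet $K_{\pi^n\omega}$, your patch is incomplete and partly circular: an interval of length $\ge\ell_{\min}$ meeting the attractor need not contain a whole net interval of \emph{uniformly bounded} level, since the attractor point you produce may lie arbitrarily close to an endpoint of $[a_i,a_i+\ell_n(\Delta)]$; and the claim that every nonempty level-$N$ net interval has mass at least $p_{\min}^N$ is not immediate from \eqref{randommeas} --- it is essentially the statement \eqref{measDelta} again one level up. What \emph{is} immediate is that a full cylinder image $S_{\omega',\tau}([0,1])$ with $\lvert\tau\rvert=M$ has mass at least $p_{\min}^M$, so what you actually need is such an image of uniformly bounded depth inside the normalized interval, and that uniformity (over the tail $\pi^n\omega$ as well as over the finitely many normalized configurations) is precisely the missing ingredient. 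In the deterministic full-support setting of \cite{HHM} all of this evaporates, which is why the paper's one-line citation suffices there; in the random, possibly non-full-support setting your argument as written does not close the lower bound.
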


\begin{proof}
This follows as in \cite[Section 3]{HHM}.
\end{proof}

We can calculate $P_{\omega ,n}(\Delta )$ by means of transition matrices
which take us from level $n-1$ to level $n$.

\begin{definition}
Let $\Delta =[a,b]\in \F_{\omega ,n}$ and let $\widehat{\Delta }=[c,d]\in \F%
_{\omega ,n-1}$ denote its parent net interval. Assume $V_{n}^{\omega
}(\Delta )=(a_{1},\dots,a_{J})$ and $V_{n-1}^{\omega }(\widehat{\Delta }%
)=(c_{1},\dots,c_{I})$. The \textbf{primitive transition matrix, }denoted by 
$T_{\omega }(\C_{n-1}^{\omega }(\widehat{\Delta }),\C_{n}^{\omega }(\Delta
)), $ is the $I\times J$ matrix whose $(i,j)$'th entry, $T_{ij}$, is defined
as follows: Put $T_{ij}=p_{\ell }$ if there is a letter $\ell $ and coding $%
\sigma \in \Lambda _{\omega ,n-1}$ with $\sigma \ell \in \Lambda _{\omega
,n} $ satisfying $S_{\sigma \ell }(0)=a-r^{n}a_{j}$ and $S_{\sigma
}(0)=c-r^{n-1}c_{i}$. If there is no such $\ell $, we put $T_{ij}=0$.
\end{definition}

Every column of a primitive transition matrix has at least one non-zero
entry by virtue of the existence of $c_{i}$. When the support of $\mu
_{\omega }$ is the full interval $[0,1]$, then it also follows that each row
has a non-zero entry.

The same reasoning as in \cite{HHM} shows that if $Q_{\omega ,n}(\Delta
)=(P_{\omega ,n}^{1}(\Delta ),\dots ,P_{\omega ,n}^{J}(\Delta ))$ and $%
Q_{0}([0,1])=(1)$, then 
\begin{equation*}
Q_{\omega ,n}(\Delta )=Q_{\omega ,n-1}(\widehat{\Delta })T_{\omega }(\C%
_{n-1}^{\omega }(\widehat{\Delta }),\C_{n}^{\omega }(\Delta )).
\end{equation*}
Consequently, if $\Delta $ has $\omega $-symbolic representation $(\gamma
_{0},\gamma _{1},\dots ,\gamma _{n})$, then 
\begin{equation*}
P_{\omega ,n}(\Delta )=\left\Vert T_{\omega }(\gamma _{0},\gamma _{1})\dots
T_\omega(\gamma _{n-1},\gamma _{n})\right\Vert
\end{equation*}
where the matrix norm is given by $\left\Vert (M_{ij})\right\Vert
=\sum_{ij}\left\vert M_{ij}\right\vert $. We will write $T_{\omega }(\gamma
_{0},\gamma _{1},\dots ,\gamma _{n})$ for $T_{\omega }(\gamma _{0},\gamma
_{1})\dots T_{\omega }(\gamma _{n-1},\gamma _{n})$ and call this a \textit{%
transition matrix}.

If $A$ and $B$ are transition matrices, then since $A$ has a non-zero entry
in each column it is easy to see that $\left\Vert AB\right\Vert \geq
c\left\Vert B\right\Vert $, where $c>0$ depends only on $A$. Thus if $%
[x]_{\omega }=(\gamma _{0},\gamma _{1},\dots )$ and $\Delta _{\omega,n}$ is
the $\omega $-net interval containing $x$ with symbolic representation $%
(\gamma _{0},\gamma _{1},\dots ,\gamma _{n})$, then for any $N\leq n$. 
\begin{equation*}
\mu _{\omega }(\Delta_{\omega,n})\sim P_{\omega ,n}(\Delta_{\omega,n})\sim
\left\Vert T_{\omega }(\gamma _{N},\gamma _{N+1},\dots ,\gamma
_{n})\right\Vert ,
\end{equation*}
where the constants of comparability can be chosen to depend only on $N$.

Given $x\in K_{\omega }$ and the $\omega $-net interval $\Delta
_{\omega,n}(x)$, we let $\Delta _{\omega,n}^{+}(x)$ and $\Delta_{%
\omega,n}^{-}(x)$ be the right and left adjacent $\omega $-net intervals of
level $n$ (understanding that some of these might be empty if $x=0,1$ or if $%
K_{\omega }$ is not an interval). Put 
\begin{equation*}
M_{\omega ,n}(x)=\mu _{\omega }(\Delta_{\omega,n}(x))+\mu _{\omega }(\Delta
_{\omega,n}^{+}(x))+\mu _{\omega }(\Delta_{\omega,n}^{-}(x)).
\end{equation*}

\begin{proposition}
Let $\mu _{\omega }$ be a random self-similar measure associated with an
equicontractive RIFS of finite type. For each $x$ in the random attractor $%
K_{\omega }$ we have the formula 
\begin{equation*}
\dimlc\mu _{\omega }(x)=\lim_{n\rightarrow \infty }\frac{\log M_{\omega
,n}(x)}{n\log r}
\end{equation*}
provided the limit exists. Similar formulas hold for the upper and lower
local dimensions.
\end{proposition}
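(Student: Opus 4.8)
The plan is to run everything through formula (\ref{eq:locdim}), which already expresses $\dimlc\mu_\omega(x)$ --- and, in its $\limsup$/$\liminf$ forms, the upper and lower local dimensions --- in terms of the asymptotics of $\mu_\omega([x-r^{n},x+r^{n}])$. It then suffices to compare $\mu_\omega([x-r^{n},x+r^{n}])$ with $M_{\omega,n'}(x)$ for $n'$ differing from $n$ by a fixed amount, and this comparison rests on the geometric regularity that finite type provides.

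The first ingredient is a two-sided control on the size of $\omega$-net intervals of level $n$. Every such interval has length at most $r^{n}$: since $K_\omega\subseteq\bigcup_{\sigma\in\Lambda_{\omega,n}}S_{\omega,\sigma}([0,1])$ and each $S_{\omega,\sigma}([0,1])$ has length $r^{n}$ with endpoints among the points defining the net intervals, any net interval meeting $K_\omega$ lies inside one of them. Conversely, the finite-type hypothesis --- via the characterization of finite type established above --- gives only finitely many characteristic vectors, hence only finitely many normalized lengths $\ell_n(\Delta)$ and only finitely many neighbour-set configurations; in particular there is a constant $\ell_{\min}>0$, independent of $n$ and $\omega$, with $\ell_n(\Delta)\ge\ell_{\min}$, and likewise any gap separating an $\omega$-net interval of level $n$ from the rest of $K_\omega$ has length at least a fixed multiple of $r^{n}$. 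Thus level-$n$ net intervals, and the gaps between them, all have length comparable to $r^{n}$, uniformly in $n$ and $\omega$.

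The second ingredient is the sandwiching. Fix an integer $c\ge 1$ large enough that $r^{c}\le\tfrac13$ and $r^{c}$ is at most both $\ell_{\min}$ and the uniform gap constant of the previous paragraph. On the one hand, $\Delta_{\omega,n+c}(x)$ together with its two adjacent net intervals contains $x$ and has total length $\le 3r^{n+c}\le r^{n}$, so it lies inside $[x-r^{n},x+r^{n}]$; using that $\mu_\omega$ is supported on $K_\omega$ and that these net intervals overlap in at most two points, this gives $M_{\omega,n+c}(x)\le C\,\mu_\omega([x-r^{n},x+r^{n}])$ for an absolute constant $C$. On the other hand, $\Delta_{\omega,n-c}(x)$ and its two adjacent net intervals each have length $\ge\ell_{\min}r^{n-c}\ge r^{n}$, and the gaps flanking this block likewise have length $\ge r^{n}$, so a ball of radius $r^{n}$ about a point of $\Delta_{\omega,n-c}(x)$ cannot reach any further level-$(n-c)$ net interval; hence $[x-r^{n},x+r^{n}]\cap K_\omega\subseteq\Delta_{\omega,n-c}^{-}(x)\cup\Delta_{\omega,n-c}(x)\cup\Delta_{\omega,n-c}^{+}(x)$ and therefore $\mu_\omega([x-r^{n},x+r^{n}])\le M_{\omega,n-c}(x)$. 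This step is the technical heart of the argument and parallels the deterministic finite-type computation in \cite{HHM,HHN}; the boundary cases $x\in\{0,1\}$ and the possible emptiness of an adjacent net interval when $K_\omega$ is not an interval are handled routinely, the ``missing'' side carrying no $\mu_\omega$-mass.

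Finally, take logarithms, divide by $n\log r<0$, and let $n\to\infty$. The fixed index shift $\pm c$ and the constant $C$ both vanish under division by $n\log r$ --- the relevant $\limsup$/$\liminf$ being finite, as $\mu_\omega([x-r^{n},x+r^{n}])\ge(\min_{j,k}p_{j,k})^{n}$ since the ball contains an $n$-fold image of $K_\omega$ --- so the two inequalities squeeze $\mu_\omega([x-r^{n},x+r^{n}])$ between $M_{\omega,n+c}(x)$ and $M_{\omega,n-c}(x)$ and yield
\[
\limsup_{n}\frac{\log\mu_\omega([x-r^{n},x+r^{n}])}{n\log r}=\limsup_{n}\frac{\log M_{\omega,n}(x)}{n\log r},\qquad \liminf_{n}\frac{\log\mu_\omega([x-r^{n},x+r^{n}])}{n\log r}=\liminf_{n}\frac{\log M_{\omega,n}(x)}{n\log r}.
\]
Combined with (\ref{eq:locdim}) and its $\limsup$/$\liminf$ analogues, the two sides are $\dimulc\mu_\omega(x)$ and $\dimllc\mu_\omega(x)$ respectively, which gives the stated formula for $\dimlc\mu_\omega(x)$ whenever the limit exists.
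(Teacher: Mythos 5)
Your proof is correct and follows essentially the same route as the paper, which merely records that finite type forces all level-$n$ net intervals (and the separating gaps) to have length comparable to $r^{n}$ and then defers the resulting two-sided comparison of $\mu_{\omega}([x-r^{n},x+r^{n}])$ with $M_{\omega,n}(x)$, up to a bounded level shift, to Theorem 2.6 of \cite{HHN} --- precisely the sandwich you carry out, using the correct reading of $\Delta_{\omega,n}^{\pm}(x)$ as the abutting net intervals (which may be absent), without which the second inequality would fail. The one point you state more tersely than it deserves is the uniform lower bound on gap lengths by a fixed multiple of $r^{n}$: this is not literally part of the characteristic vector data, but it does follow from finiteness of the characteristic vectors because the positions of the children inside a parent net interval (hence the normalized lengths of any newly created gaps) are determined by the parent's characteristic vector together with the letter $\omega_{n+1}$, of which there are only finitely many combinations, while previously created gaps only grow relative to $r^{n}$.
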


\begin{proof}
The finite type property guarantees that all net intervals of level $n$ have
length comparable to $r^{n}$. This fact is enough to ensure that $\mu
_{\omega }([x-r^{n},x+r^{n}])$ is comparable to $M_{\omega ,n}(x)$. For more
details, see \cite[Theorem 2.6]{HHN}.
\end{proof}

A self-similar measure associated with an IFS $\{S_{j}(x)=rx+d_{j}%
\}_{j=1}^{N}$ with $d_{j}<d_{j+1}$ and probabilities $\{p_{j}:j=1,\dots,N\}$
is said to be regular if $p_{0}=p_{N}=\min p_{j}$. We make a similar
definition in the RIFS case.

\begin{definition}
We will say that the random self-similar measure associated with an
equicontractive RIFS and probabilities $\{p_{j,k}:k\in \mathcal{I}_{j}\}$, $%
j=1,\dots,m$, is \textbf{regular} if $p_{j,0}=p_{j,\mathcal{N}%
(j)}=\min_{k}p_{j,k}$ for each $j$.
\end{definition}

In the single IFS case, regular self-similar measures $\mu $ have the
property that the $\mu $-measure of adjacent (non-empty) net intervals are
comparable (see \cite[Lemma 3.5]{HHM}). The same proof gives the same
conclusion in the RIFS case and therefore $M_{\omega ,n}(x)\sim \mu _{\omega
}(\Delta _{\omega,n}(x))$ for all $x$. This gives the following simple
formula for local dimensions of regular self-similar measures.

\begin{theorem}
\label{regLocDim}Let $\mu _{\omega }$ be a random self-similar measure
associated with an equicontractive RIFS of finite type. Assume that $\mu
_{\omega }$ is regular and that $x\in $ $K_{\omega }$ has $\omega $-symbolic
representation $[x]_{\omega }=(\gamma _{0},\gamma _{1},\dots)$. For any $%
N\in \mathbb{N}$, 
\begin{eqnarray*}
\dimlc\mu _{\omega }(x) &=&\lim_{n\rightarrow \infty }\frac{\log \mu_\omega
(\Delta _{\omega,n}(x))}{n\log r} \\
&=&\lim_{n\rightarrow \infty }\frac{\log \left\Vert T_\omega(\gamma
_{N},\gamma _{N+1},\dots ,\gamma _{N+n})\right\Vert }{n\log r}
\end{eqnarray*}
provided the limit exists. Similar formulas hold for the upper and lower
local dimensions.
\end{theorem}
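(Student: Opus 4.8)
The plan is to assemble the comparabilities already recorded in this subsection and pass to the limit. It is cleanest to prove the assertions for the upper and lower local dimensions first, since these require no existence hypothesis; the claim for $\dimlc\mu_\omega(x)$ then follows immediately, because it says precisely that the relevant $\limsup$'s coincide with the corresponding $\liminf$'s. Concretely, I will show that for every fixed $N\in\BbN$,
\[
\dimulc\mu_\omega(x)=\limsup_{n}\frac{\log\mu_\omega(\Delta_{\omega,n}(x))}{n\log r}=\limsup_{n}\frac{\log\lVert T_\omega(\gamma_N,\dots,\gamma_{N+n})\rVert}{n\log r},
\]
and the same with $\liminf$ throughout.

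For the first equality I start from the Proposition established above, which gives $\dimulc\mu_\omega(x)=\limsup_n \log M_{\omega,n}(x)/(n\log r)$ and the analogue with $\liminf$. Regularity of $\mu_\omega$, as noted in the remark preceding the theorem, yields $M_{\omega,n}(x)\sim\mu_\omega(\Delta_{\omega,n}(x))$ with comparability constants independent of $n$, so $\log M_{\omega,n}(x)=\log\mu_\omega(\Delta_{\omega,n}(x))+O(1)$; dividing by $n\log r\to-\infty$ annihilates the bounded error and leaves $\limsup$ and $\liminf$ unchanged. For the second equality, fix $N$ and recall the comparability $\mu_\omega(\Delta_{\omega,k}(x))\sim\lVert T_\omega(\gamma_N,\dots,\gamma_k)\rVert$ valid for every $k\ge N$ with constants depending only on $N$. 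Writing $L_n=\log\lVert T_\omega(\gamma_N,\dots,\gamma_{N+n})\rVert$ and taking $k=N+n$, we obtain $L_n/(n\log r)=\tfrac{N+n}{n}\cdot\bigl(\log\mu_\omega(\Delta_{\omega,N+n}(x))/((N+n)\log r)\bigr)+O_N(1)/(n\log r)$. As $n\to\infty$ the factor $(N+n)/n\to1$, the last term tends to $0$, and the sequence $\bigl(\log\mu_\omega(\Delta_{\omega,N+n}(x))/((N+n)\log r)\bigr)_n$ is a tail of $\bigl(\log\mu_\omega(\Delta_{\omega,m}(x))/(m\log r)\bigr)_m$, hence has the same $\limsup$ and $\liminf$, namely $\dimulc\mu_\omega(x)$ and $\dimllc\mu_\omega(x)$ by the first equality (the middle sequence is bounded, which legitimises the manipulation). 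This establishes the displayed identities for the upper and lower local dimensions; if the limit defining $\dimlc\mu_\omega(x)$ exists then all three $\limsup$'s equal their $\liminf$'s and the asserted chain of equalities for $\dimlc\mu_\omega(x)$ follows.

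I do not expect any serious analytic obstacle: the whole proof is a limit computation in which every bounded error is killed by division by $n\log r$. The only point that needs care is the uniformity in $n$ of the two comparability constants. For $M_{\omega,n}(x)\sim\mu_\omega(\Delta_{\omega,n}(x))$ this is exactly where regularity enters, together with the finite type hypothesis, which ensures finitely many characteristic vectors and hence a single constant good at all levels (indeed for all $\omega$). For $\mu_\omega(\Delta_{\omega,k}(x))\sim\lVert T_\omega(\gamma_N,\dots,\gamma_k)\rVert$ the relevant fact, already established above, is that once the initial level $N$ is fixed the constant is fixed --- and this is precisely why the limit in the theorem must be phrased with a fixed $N$ and the level $N+n$ running to infinity, rather than with $N$ allowed to grow. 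Granting these uniformities, the bookkeeping above completes the proof.
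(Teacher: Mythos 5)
Your proposal is correct and follows essentially the same route the paper intends: the theorem is presented as an immediate consequence of the preceding Proposition, the regularity remark giving $M_{\omega,n}(x)\sim\mu_\omega(\Delta_{\omega,n}(x))$, and the comparability $\mu_\omega(\Delta_{\omega,n})\sim\lVert T_\omega(\gamma_N,\dots,\gamma_n)\rVert$ with constants depending only on $N$, all bounded errors being killed upon dividing by $n\log r$. Your extra care with the index shift $N+n$ versus $n$ and with proving the $\limsup$/$\liminf$ versions first is exactly the routine bookkeeping the paper leaves implicit.
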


We can obtain bounds on the local dimensions by making estimates on the
norms of transition matrices. For this, we introduce further notation: Given
a matrix $T=(T_{ij})$, denote by $\left\Vert T\right\Vert _{c,\min }$ and $%
\left\Vert T\right\Vert _{c,\max }$ the pseudo-norms 
\begin{equation*}
\left\Vert T\right\Vert _{c,\min }=\min_{j}\sum_{i}\left\vert
T_{ij}\right\vert ,\text{ }\left\Vert T\right\Vert _{c,\max
}=\max_{j}\sum_{i}\left\vert T_{ij}\right\vert ,
\end{equation*}
that is, the minimum and maximum column sums. For matrices with non-negative
entries we have 
\begin{equation*}
\left\Vert T_{1}T_{2}\right\Vert _{c,\min }\geq \left\Vert T_{1}\right\Vert
_{c,\min }\left\Vert T_{2}\right\Vert _{c,\min },\left\Vert
T_{1}T_{2}\right\Vert _{c,\max }\leq \left\Vert T_{1}\right\Vert _{c,\max
}\left\Vert T_{2}\right\Vert _{c,\max }\text{ }
\end{equation*}
\begin{equation*}
\text{and }\left\Vert T\right\Vert _{c,\min }\leq \left\Vert T\right\Vert
_{c,\max }\leq \left\Vert T\right\Vert \leq C\left\Vert T\right\Vert
_{c,\max },
\end{equation*}
where $C$ is the number of columns of $T$. We can similarly define the
minimum and maximum row sum pseudo-norms, $\left\Vert T\right\Vert _{\text{%
row},\min }$ and $\left\Vert T\right\Vert _{\text{row},\max }$.

\begin{corollary}
Let $x\in K_{\omega }$ with $[x]_{\omega }=(\gamma _{0},\gamma _{1},\dots)$.
For each integer $N$, let $m_{N}=\min \{\left\Vert T(\gamma _{j},\gamma
_{j+1})\right\Vert _{c,\min }:j\geq N\}$ and $M_{N}=\max \{\left\Vert
T(\gamma _{j},\gamma _{j+1})\right\Vert _{c,\max }:j\geq N\}$. Then 
\begin{equation*}
\sup_{N}\frac{\log M_{N}}{\log r}\leq \dimlc\mu _{\omega }(x)\leq \inf_{N} 
\frac{\log m_{N}}{\log r}.
\end{equation*}
\end{corollary}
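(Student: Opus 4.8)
The plan is to read off everything from the transition‑matrix formula of Theorem~\ref{regLocDim} and the pseudo‑norm inequalities recorded just above, which already package the submultiplicativity we need. Fix $x\in K_{\omega}$ with $[x]_{\omega}=(\gamma_{0},\gamma_{1},\dots)$ and fix an integer $N$. I would write $P_{n}=T_{\omega}(\gamma_{N},\gamma_{N+1},\dots,\gamma_{N+n})=T_{\omega}(\gamma_{N},\gamma_{N+1})\cdots T_{\omega}(\gamma_{N+n-1},\gamma_{N+n})$, a product of $n$ primitive transition matrices, all with non‑negative entries. By Theorem~\ref{regLocDim},
\begin{equation*}
\dimlc\mu_{\omega}(x)=\lim_{n\to\infty}\frac{\log\lVert P_{n}\rVert}{n\log r}
\end{equation*}
when the limit exists, so it is enough to sandwich $\lVert P_{n}\rVert$ between $m_{N}^{\,n}$ and a constant multiple of $M_{N}^{\,n}$. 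First I would note the two standing remarks that make this legitimate: finite type gives only finitely many characteristic vectors, hence only finitely many distinct primitive transition matrices, so the minimum and maximum defining $m_{N}$ and $M_{N}$ are attained with $0<m_{N}\le M_{N}<\infty$; and the number of columns of every primitive transition matrix — hence of $P_{n}$, which has the same number of columns as its last factor — is bounded by a constant $C$ independent of $n$.

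Next come the two estimates. For the upper bound on $\dimlc\mu_{\omega}(x)$, I would use $\lVert T\rVert\ge\lVert T\rVert_{c,\min}$, the submultiplicativity $\lVert T_{1}T_{2}\rVert_{c,\min}\ge\lVert T_{1}\rVert_{c,\min}\lVert T_{2}\rVert_{c,\min}$ for non‑negative matrices, and $\lVert T_{\omega}(\gamma_{j},\gamma_{j+1})\rVert_{c,\min}\ge m_{N}$ for all $j\ge N$, to get
\begin{equation*}
\lVert P_{n}\rVert\ \ge\ \lVert P_{n}\rVert_{c,\min}\ \ge\ \prod_{j=N}^{N+n-1}\lVert T_{\omega}(\gamma_{j},\gamma_{j+1})\rVert_{c,\min}\ \ge\ m_{N}^{\,n}.
\end{equation*}
Taking logarithms and dividing by $n\log r<0$ reverses the inequality, so $\log\lVert P_{n}\rVert/(n\log r)\le\log m_{N}/\log r$, and letting $n\to\infty$ gives $\dimlc\mu_{\omega}(x)\le\log m_{N}/\log r$. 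Symmetrically, from $\lVert T\rVert\le C\lVert T\rVert_{c,\max}$, the bound $\lVert T_{1}T_{2}\rVert_{c,\max}\le\lVert T_{1}\rVert_{c,\max}\lVert T_{2}\rVert_{c,\max}$, and $\lVert T_{\omega}(\gamma_{j},\gamma_{j+1})\rVert_{c,\max}\le M_{N}$ for $j\ge N$, I would obtain $\lVert P_{n}\rVert\le C\,M_{N}^{\,n}$, hence $\log\lVert P_{n}\rVert/(n\log r)\ge(\log C+n\log M_{N})/(n\log r)$; since $\log C/(n\log r)\to 0$, letting $n\to\infty$ yields $\dimlc\mu_{\omega}(x)\ge\log M_{N}/\log r$.

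Both inequalities hold for every integer $N$, so taking the infimum over $N$ on one side and the supremum over $N$ on the other gives $\sup_{N}(\log M_{N}/\log r)\le\dimlc\mu_{\omega}(x)\le\inf_{N}(\log m_{N}/\log r)$, as required. (Running the same computation with $\limsup$ and $\liminf$ in place of the limit, together with the corresponding formulas in Theorem~\ref{regLocDim}, in fact yields $\sup_{N}(\log M_{N}/\log r)\le\dimllc\mu_{\omega}(x)$ and $\dimulc\mu_{\omega}(x)\le\inf_{N}(\log m_{N}/\log r)$ with no existence hypothesis.) I do not expect a genuine obstacle here: the only points needing care are the sign reversal when dividing by $\log r<0$ and verifying that the column count $C$ can be taken independent of $n$, both of which are immediate consequences of the finite type assumption.
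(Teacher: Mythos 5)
Your proposal is correct and is exactly the argument the paper intends: the corollary is an immediate consequence of Theorem~\ref{regLocDim} together with the pseudo-norm inequalities stated just before it (submultiplicativity of $\lVert\cdot\rVert_{c,\min}$, $\lVert\cdot\rVert_{c,\max}$, comparison with $\lVert\cdot\rVert$, and finiteness of the set of characteristic vectors), with the only delicate points being the sign reversal from $\log r<0$ and the bounded column count, both of which you handle. Nothing further is needed.
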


We remark that $m_{N}$ and $M_{N}$ exist as there are only finitely many
characteristic vectors.

\subsection{The Essential Class}

\begin{definition}
Let $\fS$ be an equicontractive RIFS of finite type and suppose $\gamma $
and $\beta $ are two characteristic vectors. Let $\nu =(\nu _{1},\dots,\nu
_{k})\in \mathcal{A}^{k}$. We say that $\gamma =\gamma _{0},\gamma
_{1},\dots,\gamma _{k}=\beta $ is an \textbf{admissible }$\nu $\textbf{-path
linking }$\gamma $ \textbf{to} $\beta $ if there is some $\omega \in \Omega $
with substring $\nu $, say $\omega _{N+j}=\nu _{j}$ for $j=1,\dots,k$, and $%
\omega $-net intervals $\Delta _{j}$ of level $N+j$ for $j=0,\dots,k$, where 
$\Delta _{j}$ has characteristic vector $\gamma _{j}$ and each $\Delta _{j}$
is a child of $\Delta _{j-1}$. In this case, we say that $\beta $ is a 
\textbf{$\nu$-descendant} of $\gamma $.

If there is some finite word $\nu $ and admissible $\nu $-path linking $%
\gamma $ to $\beta $ we say there is an \textbf{admissible path linking }$%
\gamma $ \textbf{to} $\beta $ and that $\beta $ is a \textbf{descendant} of $%
\gamma $. We call $\beta $ a \textbf{child} of $\gamma $ if there is an
admissible path of length one.
\end{definition}

We show, first, that descendancy is a transitive relationship.

\begin{lemma}
If $\gamma _{2}$ is a descendant of $\gamma _{1}$, and $\gamma _{1}$ is a
descendant of $\gamma _{0}$, then $\gamma _{2}$ is a descendant of $%
\gamma_{0}$.
\end{lemma}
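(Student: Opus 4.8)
The plan is to prove the stronger combinatorial statement that \emph{admissible paths concatenate}: if there is an admissible $\nu$-path linking $\gamma_0$ to $\gamma_1$ and an admissible $\nu'$-path linking $\gamma_1$ to $\gamma_2$, then the juxtaposed word $\nu\nu'$ yields an admissible $\nu\nu'$-path linking $\gamma_0$ to $\gamma_2$, which is exactly the assertion of the lemma. Unwinding the definitions, the first path is witnessed by some $\omega\in\Omega$, some level $N$, and net intervals $\Delta_0,\dots,\Delta_k$ with $\Delta_j\in\F_{\omega,N+j}$, $\Delta_j$ a child of $\Delta_{j-1}$, $\omega_{N+j}=\nu_j$, and $\C^{\omega}_{N+j}(\Delta_j)$ the $j$-th vertex of the path; similarly the second is witnessed by some \emph{a priori unrelated} $\omega'\in\Omega$, level $M$, and net intervals $\Delta'_0,\dots,\Delta'_l$, where $\Delta'_0$ and $\Delta_k$ both carry the characteristic vector $\gamma_1$. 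The content of the proof is to fabricate a single $\omega''$ that realizes the combined path.

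The tool is the remark following the definition of the characteristic vector: the collection of children of a net interval $\Delta$ of level $n$, together with their characteristic vectors, depends only on $\C^{\omega}_n(\Delta)$ and on the letter $\omega_{n+1}$ (the $t$-index of a child merely records its position among siblings of equal normalized length and neighbour set, so is likewise determined); call this the \textbf{Markov property}. I would also use that every net interval of level $n$ sits in a unique ancestor chain $[0,1]=\Theta_0\supseteq\Theta_1\supseteq\dots\supseteq\Theta_n$ with $\Theta_i\in\F_{\omega,i}$ and $\Theta_i$ a child of $\Theta_{i-1}$; applying this to $\Delta_0$ gives such a chain ending at $\Theta_N=\Delta_0$. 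Now define $\omega''\in\Omega$ by $\omega''_i=\omega_i$ for $1\le i\le N+k$, $\omega''_{N+k+s}=\nu'_s$ for $1\le s\le l$, and extend arbitrarily. Since $\omega''$ agrees with $\omega$ on the first $N+k$ coordinates and the level-$0$ characteristic vector is the same for every sequence, an induction on level from the root, invoking the Markov property at each step, shows that $\Theta_0,\dots,\Theta_N=\Delta_0,\Delta_1,\dots,\Delta_k$ are again net intervals for $\omega''$, of the same levels and with the same characteristic vectors; in particular $\Delta_k\in\F_{\omega'',N+k}$ has characteristic vector $\gamma_1$.

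Since $\Delta'_0\in\F_{\omega',M}$ also carries $\gamma_1$, and $\omega'_{M+s}=\nu'_s=\omega''_{N+k+s}$ for $1\le s\le l$, a second induction using the Markov property transports the chain $\Delta'_0,\Delta'_1,\dots,\Delta'_l$ onto a chain $\Delta_k,\Theta'_1,\dots,\Theta'_l$ of net intervals for $\omega''$ of levels $N+k,N+k+1,\dots,N+k+l$, each a child of its predecessor, carrying the same characteristic vectors; thus $\Theta'_l$ has characteristic vector $\gamma_2$. Concatenating, $\Delta_0,\Delta_1,\dots,\Delta_k,\Theta'_1,\dots,\Theta'_l$ is an admissible $\nu\nu'$-path linking $\gamma_0$ to $\gamma_2$, witnessed by $\omega''$ at level $N$, so $\gamma_2$ is a descendant of $\gamma_0$. (If $k=0$ or $l=0$ the conclusion is immediate from the hypotheses.)

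The one delicate point, and the reason the ancestor chains are needed rather than simply restarting the second path at level $N+k$, is that being a net interval requires meeting $K_\omega$, a condition that could a priori feel the tail of $\omega$ altered by the splice; the Markov property propagates this condition step by step down the chain precisely because changing $\omega$ after level $N+k$ leaves $\C^{\omega}_i(\Theta_i)$ and $\omega_{i+1}$ untouched for $i<N+k$. That the Markov property holds \emph{including} the net-interval condition is exactly the content of the cited remark (it holds ``as in the single IFS case''); granting it, the argument is pure bookkeeping. The main thing I would double-check is that the remark is being applied in the forward direction — passing from a net interval's characteristic vector and next letter to the full labelled list of its children — which is what the two inductions require.
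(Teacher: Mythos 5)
There is a genuine gap, and it is exactly the point you flag as ``delicate'' and then wave through. The ``Markov property'' you rely on is stronger than what the paper's remark asserts: the remark says that the normalized length and neighbour set of any \emph{child} of an $\omega$-net interval $\Delta$ of level $n$ are determined by $\C_{n}^{\omega }(\Delta )$ and $\omega _{n+1}$; it does \emph{not} say that \emph{which} candidate subintervals of $\Delta$ are net intervals is so determined. Membership in $\F_{\omega'' ,n+1}$ requires the open interval to meet $K_{\omega'' }$, and $K_{\omega'' }$ depends on the entire tail of $\omega'' $, not on its first $n+1$ letters. Since you extend $\omega'' $ \emph{arbitrarily} beyond position $N+k+l$, neither of your inductions closes: you cannot conclude that $\Delta _{k}$ (hence its ancestors) meets $K_{\omega'' }$, nor that the transported intervals $\Theta _{1}^{\prime },\dots ,\Theta _{l}^{\prime }$ do, so you have not shown that any of them are $\omega'' $-net intervals at all; the same problem infects the $t$-component of the characteristic vector, which counts siblings that \emph{are} net intervals. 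Verifying this nonemptiness is precisely the nontrivial content of the lemma, and it is where the paper's proof spends its effort.

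The paper avoids the problem by choosing the splice so that the tail of the new sequence is the tail of the \emph{second witness}: with $\tau =(\omega _{1},\dots ,\omega _{n},\nu _{k+1},\nu _{k+2},\dots )$ one has $\pi ^{n}\tau =\pi ^{k}\nu $, so the attractor governing everything below the splice point is literally the same set as for the second path; the crucial condition $\inte \Delta _{2}\cap K_{\tau }\neq \emptyset $ is then checked by hand, by taking a composition $S_{\nu ,\sigma ^{\prime }\sigma _{N}}([0,1])\subseteq \inte \Delta _{2}^{\prime }$ and, using that $(\Delta _{1},\Delta _{2})$ is an affine copy of $(\Delta _{1}^{\prime },\Delta _{2}^{\prime })$ with matching neighbour sets, transferring it to a cylinder $S_{\tau ,\sigma \sigma _{N}}([0,1])\subseteq \inte \Delta _{2}$, which contains points of $K_{\tau }$. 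Your argument can be repaired along the same lines: replace ``extend arbitrarily'' by $\omega'' _{N+k+s}=\omega _{M+s}^{\prime }$ for \emph{all} $s\geq 1$, so that $\pi ^{N+k}\omega'' =\pi ^{M}\omega ^{\prime }$, and then \emph{prove}, rather than assume, that the transported intervals meet $K_{\omega'' }$ --- for instance by the cylinder-transfer argument applied to the last interval of the chain, since a point of $K_{\omega'' }$ in $\inte \Theta _{l}^{\prime }$ certifies all of its ancestors, including $\Delta _{k},\dots ,\Delta _{0}$. With those changes your proposal becomes essentially the paper's proof, extended from single steps to paths; as written, the central step is assumed under the name of a remark that does not contain it, and the arbitrary tail makes the needed verification unavailable.
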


\begin{proof}
To simplify notation we will assume both descendants are children; the
arguments are the same in general.

Assume $\Delta _{0}$ is an $\omega $-net interval of level $n-1$ with
characteristic vector $\gamma _{0}$, $\Delta _{1}$ is an $\omega $-net
subinterval of level $n$ with characteristic vector $\gamma _{1}$, $\Delta
_{1}^{\prime }$ is a $\nu $-net interval of level $k$ with characteristic
vector $\gamma _{1}$ and $\Delta _{2}^{\prime }$ is a $\nu $-net interval of
level $k+1$ with characteristic vector $\gamma _{2}$. Let $\tau =(\omega
_{1},\dots,\omega _{n},\nu _{k+1},\nu _{k+2},\dots)\in \Omega $. The
intervals $\Delta _{0}$ and $\Delta _{1}$ are clearly also $\tau $-net
intervals of level $n-1$ and $n$, with characteristic vectors $\gamma _{0}$
and $\gamma _{1}$ respectively. Furthermore, since the normalized length and
neighbour set of any $\tau $-child of $\Delta _{1}$ depends only on the
normalized length and neighbour set of $\Delta _{1}$ (which coincides with
those of $\Delta _{1}^{\prime }$) and the contractions in $\fS_{\tau _{n+1}}$
which are the same as $\fS_{\nu _{k+1}}$, these coincide with the $\nu $%
-children of $\Delta _{1}^{\prime }$.

In particular, there is a subinterval $\Delta _{2}$ of $\Delta _{1}$, which
will be a $\tau $-net interval of level \thinspace $n+1$ with characteristic
vector $\gamma _{2}$ provided $\inte\Delta _{2}\cap K_{\tau }\neq \emptyset $%
. We know that $\inte\Delta _{2}^{\prime }\cap K_{\nu }\neq \emptyset $ and
that means there is some $\sigma ^{\prime }\in \Lambda _{\nu ,k}$, $N\in 
\mathbb{N}$ and $\sigma _{N}=(j_{1},\dots,j_{N})$ with $j_{i}\in \mathcal{I}%
_{\nu _{k+i}}$, $i=1,\dots,N$, such that $S_{\nu ,\sigma ^{\prime }}([0,1])$
covers $\Delta _{1}^{\prime }$ and $S_{\nu ,\sigma ^{\prime }\sigma
_{N}}([0,1])\subseteq \inte\Delta _{2}^{\prime }$. But because the geometry
of the pair $\Delta _{1}$, $\Delta _{2}$ is identical to that of the pair $%
\Delta _{1}^{\prime }$, $\Delta _{2}^{\prime }$ up to rescaling, it follows
that there is some $\sigma \in \Lambda _{\omega ,n}$ such that $S_{\omega
,\sigma }([0,1])=S_{\tau ,\sigma }([0,1])$ covers $\Delta _{1}$ and $S_{\tau
,\sigma \sigma _{N}}([0,1])\subseteq \inte\Delta _{2}$. This ensures $\inte%
\Delta _{2}\cap K_{\tau }\neq \emptyset $.

Thus $\gamma _{2}$ is the characteristic vector of the $\tau $-net interval $%
\Delta _{2}$ and that proves $\gamma _{2}$ is a descendant of $\gamma _{0}$.
\end{proof}

\begin{definition}
A non-empty subset $\mathcal{L}$ of the set of all characteristic vectors is
called a \textbf{loop class} if whenever $\gamma ,\beta \in \mathcal{L}$
then there is an admissible path linking $\gamma $ to $\beta $, say $\gamma
=\gamma _{0},\gamma _{1},\dots,\gamma _{k}=\beta $, with each $\gamma
_{j}\in \mathcal{L}$. If every child of every member of a loop class $%
\mathcal{L}$ is again in $\mathcal{L}$, we call $\mathcal{L}$ an \textbf{%
essential class}.
\end{definition}

Feng, in \cite{F3}, proved that each IFS of finite type admits a unique
essential class. We will prove that this is true in the RIFS case, as well.
First, we present a result that may be of independent interest.

\begin{lemma}
There is a characteristic vector that is a descendant of every
characteristic vector.
\end{lemma}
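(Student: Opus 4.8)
The plan is to find, among the finitely many characteristic vectors, one that is "sink-like" — reachable by an admissible path from every other characteristic vector — by exploiting the fact that any long enough net interval interval of level $n$ has a descendant of a prescribed geometry if we are free to choose the tail of $\omega$.

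The key idea is the following. Fix any characteristic vector $\gamma$, realized by an $\omega$-net interval $\Delta$ of some level $n$. I claim that for a suitable choice of the letters $\omega_{n+1},\omega_{n+2},\dots$ there is a descendant net interval whose symbolic data has a canonical form. Concretely, pick a letter $j$ and an index $k\in\I_j$ with $S_{j,k}(0)=0$ (which exists by the convex hull assumption), and consider extending $\omega$ by the constant string $jjj\cdots$. Applying the IFS $\fS_j$ repeatedly and always following the left-most child, after finitely many steps the net interval shrinks so that its left endpoint is $S_{\omega,\sigma}(0)$ for a coding $\sigma$ that ends in a long block of $k$'s; once the block is long enough, the normalized neighbour set "forgets" the prefix $\omega_1,\dots,\omega_n$ and stabilizes to a characteristic vector $\gamma_\ast$ depending only on $\fS_j$, not on $\gamma$. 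Since descendancy is transitive (by the preceding lemma), $\gamma_\ast$ is a descendant of $\gamma$; and since $\gamma$ was arbitrary, $\gamma_\ast$ is a descendant of every characteristic vector.

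The steps, in order, are: (1) use the hypothesis that the convex hull of each $K_j$ is $[0,1]$ to fix, for one chosen letter $j$, indices $k,\ell\in\I_j$ with $S_{j,k}(0)=0$, $S_{j,\ell}(1)=1$; this controls the left/right boundary behaviour under $\fS_j$. (2) Show that along the constant tail $\omega_{n+i}=j$ there is a descending chain of net intervals $\Delta=\Delta_0\supseteq\Delta_1\supseteq\cdots$ with $\Delta_i$ of level $n+i$. (3) Show that after finitely many steps the characteristic vector $\C^\omega_{n+i}(\Delta_i)$ becomes independent of $i$ and of the original $\gamma$: because the normalized length and neighbour set of a child depend only on those of the parent together with the contractions of $\fS_j$, the sequence of characteristic vectors is eventually governed by a fixed map on the (finite) set of characteristic vectors, so it is eventually periodic; moreover, the relevant neighbours at level $n+i$ all come from codings whose contribution to $r^{-(n+i)}(a - S_{\omega,\sigma}(0))$ is dominated by the tail block, so distant prefixes drop out of the neighbour set once the block length exceeds a universal bound (this is precisely the finite-type mechanism). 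Take $\gamma_\ast$ to be a characteristic vector occurring in this eventual cycle. (4) Conclude by transitivity of descendancy.

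The main obstacle I anticipate is step (3): making precise that the eventual characteristic vector does not depend on which $\gamma$ we started from. One must check that, once the tail block of $k$'s (or whatever fixed block one uses for $\fS_j$) is long enough, every neighbour in $V^\omega_{n+i}(\Delta_i)$ arises from a coding that agrees with the left-most one on all but a bounded-length suffix — otherwise the normalized difference $r^{-(n+i)}|S_{\omega,\sigma}(0)-S_{\omega,\tau}(0)|$ would exceed $\diam K_\omega = 1$ and the pair would not be neighbours. Since the contractions in the tail are all from the single IFS $\fS_j$, this is a statement purely about $\fS_j$ and a finite set of normalized positions, so it reduces to the finite-type analysis already in place; still, one should be careful that the constant $t_n(\Delta_i)$ (the tie-breaking index in the characteristic vector) also stabilizes, which again follows because it is determined by the bounded local geometry. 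Once this "forgetting" is in hand, the eventual periodicity and hence the existence of $\gamma_\ast$ follow from finiteness of the characteristic vector set.
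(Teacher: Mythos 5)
The decisive step of your argument, step (3), contains a genuine gap: you never justify that the eventual characteristic vector (or cycle) reached along the constant tail $jjj\cdots$ is independent of the starting vector $\gamma$. Since the normalized length and neighbour set of a child are determined by those of the parent together with $\fS_j$, iterating ``take the left-most child under $\fS_j$'' is indeed iterating a fixed self-map of a finite set, so each orbit is eventually periodic --- but different starting characteristic vectors may fall into \emph{different} cycles, and that is exactly the point at issue. Your proposed mechanism for ``forgetting the prefix'' does not work: finite type only says that the normalized differences $r^{-n}(S_{\omega,\sigma}(0)-S_{\omega,\tau}(0))$ that are at most $1$ in modulus lie in a fixed finite set $F$; it does \emph{not} imply that two codings whose images are within $r^{n}$ of each other agree except on a suffix of bounded length. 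In overlapping systems (the only interesting case here) there are typically codings differing in arbitrarily many coordinates whose images coincide or stay $r^{n}$-close, so whether a given neighbour survives at deep levels genuinely depends on the whole prefix, i.e.\ on $\gamma$. Without an argument that all orbits of the left-most-child map merge, you only get \emph{some} descendant cycle for each $\gamma$, not a common descendant of all characteristic vectors --- which is precisely what the lemma asserts. A secondary problem: the left-most subinterval of a net interval need not itself be a net interval (its interior may miss $K_\omega$), so the chain in step (2) may not exist as described; if you instead take the left-most net-interval child, the left endpoint no longer stays anchored at images of $0$ under codings ending in the block of $k$'s, which undercuts your coding analysis.

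For comparison, the paper avoids any ``memory loss'' claim by an extremal choice: take $\beta$ realized by a net interval $\Delta_0$ of \emph{minimal} normalized length and, among those, with the \emph{maximal} number of neighbours. Given an arbitrary $\gamma$ realized by a $\nu$-net interval $\Delta_1$, one chooses a coding $\sigma$ with $S_{\nu,\sigma}([0,1])\subseteq\Delta_1$ passing through a point of the attractor in the interior of $\Delta_0$, and splices the defining words to form $\tau=(\nu_1,\dots,\nu_N,\omega)$. The scaled copy $S_{\nu,\sigma}(\Delta_0)\subseteq\Delta_1$ must be a $\tau$-net interval because no net interval can be shorter than $\beta$'s, and it cannot acquire extra neighbours because none can have more than $\beta$'s; hence it has characteristic vector $\beta$, making $\beta$ a descendant of $\gamma$. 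That extremal idea is what your proposal is missing; if you want to salvage your route, you would need to prove an actual synchronization statement for the left-most-child map (all orbits eventually coincide), which does not follow from finite type alone.
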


\begin{proof}
Choose a characteristic vector $\beta $ with minimal normalized length and
amongst the characteristic vectors of minimal length, choose one with the
largest number of neighbours. We claim that $\beta $ is a descendant of all
characteristic vectors.

Assume $\beta $ is the $\omega $-characteristic vector of $\Delta _{0}\in 
\mathcal{F}_{\omega ,n}$. Suppose $\Delta _{0}$ has endpoints $S_{\omega
,\sigma _{1}}(z_{1})$, $S_{\omega ,\sigma _{2}}(z_{2})$ where $%
z_{1},z_{2}\in \{0,1\}$ and that $z_{0}\in K_{\omega }$ is in the interior
of $\Delta $. Let $\gamma $ be any characteristic vector and assume $\gamma $
is the $\nu $-characteristic vector of net interval $\Delta _{1}\in \mathcal{%
F}_{\nu ,m}$. As the interior of $\Delta _{1}$ has non-empty intersection
with $K_{\nu }$, we can choose a suitable coding $\sigma \in \Lambda _{\nu
,N}$ of length $N$ such that $z_{0}\in S_{\nu ,\sigma }([0,1])\subseteq
\Delta _{1}$.

Now consider $\tau =(\nu _{1},\dots,\nu _{N},\omega )\in \Omega $. If $%
S_{\tau ,\sigma \sigma _{j}}(z_{j})=S_{\nu ,\sigma }\circ S_{\omega ,\sigma
_{j}}(z_{j})=h_{j}$ for $j=1,2$, then $[h_{1},h_{2}]=S_{\nu ,\sigma }(\Delta
_{0})$ is a subset of $\Delta _{1}$ and contains the point $S_{\nu ,\sigma
}(z_{0})$ which belongs to $K_{\tau }$. Because of the minimality assumption
of the length, $[h_{1},h_{2}]$ is a $\tau $-net interval. It has at least as
many neighbours as $\Delta _{0}$, but cannot have more by the maximality
assumption. Consequently, it has the same neighbours as $\Delta _{0}$ and
hence has the same (reduced) characteristic vector $\beta $. Consequently, $%
\beta $ is a descendant of $\gamma $.{}
\end{proof}

\begin{proposition}
There exists an essential class and it is unique.
\end{proposition}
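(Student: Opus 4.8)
The plan is to build the essential class directly from the distinguished characteristic vector produced by the previous lemma, which I will call $\beta_*$: there is a characteristic vector $\beta_*$ that is a descendant of every characteristic vector. First I would let $\mathcal{L}$ be the set of all descendants of $\beta_*$ (including $\beta_*$ itself, via the trivial length-zero path, or more honestly via any admissible loop at $\beta_*$, which exists because some child chain must return — this is the one genuinely delicate bookkeeping point and I address it below). The claim is that $\mathcal{L}$ is the unique essential class.

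To see that $\mathcal{L}$ is a loop class, take $\gamma,\delta\in\mathcal{L}$. Since $\gamma$ is a characteristic vector and $\beta_*$ is a descendant of every characteristic vector, $\beta_*$ is a descendant of $\gamma$; concatenating this with an admissible path from $\beta_*$ to $\delta$ (which exists since $\delta\in\mathcal{L}$) and using transitivity of descendancy (the lemma proved just above), we get an admissible path from $\gamma$ to $\delta$. One must check the intermediate characteristic vectors all lie in $\mathcal{L}$: every vector $\eta$ appearing on an admissible path from $\beta_*$ onward is by construction a descendant of $\beta_*$, and every vector $\eta$ appearing on the path from $\gamma$ to $\beta_*$ is an ancestor of $\beta_*$, hence (again by transitivity applied in the reverse bookkeeping) also has $\beta_*$ as a descendant, so $\eta$ is itself a characteristic vector from which $\beta_*$ descends — but that does not put $\eta$ in $\mathcal{L}$. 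The clean fix is to route everything through $\beta_*$: given $\gamma,\delta\in\mathcal{L}$, form the path $\gamma\to\beta_*\to\delta$, then reroute the first leg as $\gamma\to\beta_*$ replaced by first going $\gamma\to\beta_*$ and noting $\beta_*$ loops to itself, then $\beta_*\to\cdots\to\delta$; since $\gamma$ is a descendant of $\beta_*$ (as $\gamma\in\mathcal{L}$), we may instead take $\beta_*\to\gamma\to\beta_*\to\delta$, a loop in $\mathcal{L}$ passing through $\gamma$ and $\delta$, whose intermediate vertices are all descendants of $\beta_*$. This shows $\mathcal{L}$ is a loop class.

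Next, $\mathcal{L}$ is essential: if $\gamma\in\mathcal{L}$ and $\delta$ is a child of $\gamma$, then $\delta$ is a descendant of $\gamma$, and $\gamma$ is a descendant of $\beta_*$, so by transitivity $\delta$ is a descendant of $\beta_*$, i.e.\ $\delta\in\mathcal{L}$. This gives existence. For uniqueness, suppose $\mathcal{L}'$ is any essential class and pick $\gamma\in\mathcal{L}'$. Since $\beta_*$ is a descendant of $\gamma$, there is an admissible path from $\gamma$ to $\beta_*$; as $\mathcal{L}'$ is essential, every child of a member of $\mathcal{L}'$ is in $\mathcal{L}'$, so by induction along this path every vertex on it — in particular $\beta_*$ — lies in $\mathcal{L}'$. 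Thus $\beta_*\in\mathcal{L}'$. Now take any $\delta\in\mathcal{L}'$. Because $\mathcal{L}'$ is a loop class, there is an admissible path from $\beta_*$ to $\delta$ inside $\mathcal{L}'$, so $\delta$ is a descendant of $\beta_*$, giving $\delta\in\mathcal{L}$; hence $\mathcal{L}'\subseteq\mathcal{L}$. Conversely, take $\delta\in\mathcal{L}$, so $\delta$ is a descendant of $\beta_*$ via some admissible path; since $\beta_*\in\mathcal{L}'$ and $\mathcal{L}'$ is essential, every vertex on that path lies in $\mathcal{L}'$, so $\delta\in\mathcal{L}'$. Therefore $\mathcal{L}=\mathcal{L}'$.

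The main obstacle is the one flagged above: making rigorous the assertion that $\beta_*$ is a descendant of itself and, more generally, that descendancy restricted to $\mathcal{L}$ genuinely closes up into loops with all intermediate vertices in $\mathcal{L}$. The resolution uses that $\beta_*$ has at least one child (every net interval has children of level one higher), that the set of characteristic vectors is finite, and that starting from $\beta_*$ and repeatedly passing to children one must eventually revisit $\beta_*$ — which is exactly the content of $\beta_*$ being a descendant of each of its own descendants, itself a consequence of the ``descendant of every characteristic vector'' lemma together with transitivity. Everything else is routine diagram-chasing of the admissible-path relation.
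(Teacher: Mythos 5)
Your proposal is correct and follows essentially the same route as the paper: take the essential class to be the distinguished vector $\beta_*$ of the preceding lemma together with all of its descendants, verify the loop and closure properties via transitivity of descendancy, and obtain uniqueness because $\beta_*$, being a descendant of every characteristic vector, must belong to any essential class. The only wrinkle you flag is handled more directly by observing that any vector on an admissible path starting at $\gamma\in\mathcal{E}$ is a descendant of $\gamma$, hence of $\beta_*$ by transitivity, so your rerouted loop through $\beta_*$ is valid but an unnecessary detour.
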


\begin{proof}
Let $\mathcal{E}$ consist of the characteristic vector $\beta $ of minimal
normalized length and maximal number of neighbours, as in the previous
lemma, together with all its descendants.

If $\gamma _{1},\gamma _{2}\in \mathcal{E}$, then both are descendants of $%
\beta $. But $\beta $ is a descendant of both $\gamma _{1}$ and $\gamma _{2}$
and hence by transitivity each is a descendant of the other. Moreover, any
descendant of $\gamma _{1}$ is also a descendant of $\beta $ and hence
belongs to $\mathcal{E}$. This proves $\mathcal{E}$ is an essential class.

The essential class is unique since $\beta $, being a descendant of every
characteristic vector, will belong to any essential class.
\end{proof}

Assume the essential class is
\begin{equation*}
\mathcal{E}=\{\gamma _{j}:j=1,\dots,N\}.
\end{equation*}
For each $k=1,\dots,m$, let $A_{k}$ be the $N\times N$ `adjacency' matrix
whose $(i,j)$'th entry is $1$ if $\gamma _{j}$ is a child of $\gamma _{i}$
under the action of $\fS_{k}$ and $0$ otherwise. Let $e_{j}$ be the $N$%
-vector with $1$ in position $j$ and 0 else.

\begin{proposition}
\label{prop:essential} Let $\omega \in \Omega $ and assume $\gamma _{j}\in%
\mathcal{E}$ is the characteristic vector of an $\omega $-net interval $%
\Delta _{j}$ of level $J_{j}$. The upper box-counting dimension of $%
K_{\omega }$ is equal to 
\begin{equation*}
\limsup_{n\rightarrow \infty }\frac{\log \lVert e_{j}A_{\omega
_{J_{j}+1}}\dots A_{\omega _{J_{j}+n}}\rVert }{n\lvert\log r\rvert}.
\end{equation*}
The lower box-counting dimension is similar.
\end{proposition}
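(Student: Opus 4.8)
The plan is to reduce the box-counting dimension of $K_\omega$ to a count of $\omega$-net intervals, and then to identify the exponential growth rate of that count with the one produced by the essential-class adjacency matrices.

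First I would establish the reduction
\[
\overline{\dim}_B K_\omega=\limsup_{n\to\infty}\frac{\log\card\F_{\omega,n}}{n\lvert\log r\rvert},
\]
and the analogous statement with $\liminf$ for the lower box dimension. This is the standard box-counting argument, now justified by finite type: every $\omega$-net interval of level $n$ has length comparable to $r^n$ with constants independent of $\omega$ and $n$, the level-$n$ net intervals have pairwise disjoint interiors, cover $K_\omega$, and lie in $[0,1]$; hence for a mesh of size $\delta\asymp r^n$ the number of mesh intervals meeting $K_\omega$ is comparable to $\card\F_{\omega,n}$.

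Next I would record the exact combinatorial identity between net intervals and the labelled graph of characteristic vectors. Because the $t$-component distinguishes otherwise equivalent children, a net interval with characteristic vector $\gamma$ has at most one child with any prescribed characteristic vector under a given IFS $\fS_k$, and, as noted after the definition of characteristic vectors, the normalized length and neighbour set of a child depend only on those of the parent and on $\fS_{\omega_{n+1}}$. Consequently the level-$n$ descendants of a fixed net interval are in bijection with the admissible length-$n$ paths in that graph. Restricting to $\mathcal{E}$: since $\mathcal{E}$ is an essential class it is closed under children, so every descendant of $\Delta_j$ whose characteristic vector is in $\mathcal{E}$ has all of its further descendants in $\mathcal{E}$, and the number of level-$(J_j+n)$ such descendants of $\Delta_j$ is exactly $\lVert e_jA_{\omega_{J_j+1}}\cdots A_{\omega_{J_j+n}}\rVert$. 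As these are genuine members of $\F_{\omega,J_j+n}$, this already yields the inequality ``$\geq$'' between the two $\limsup$s, the fixed shift $J_j$ being harmless after dividing by $n$.

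The reverse inequality is the crux, and it amounts to showing that the characteristic vectors lying strictly outside $\mathcal{E}$ cannot sustain the top exponential growth rate. I would use two structural facts: (i) $\mathcal{E}$ is absorbing, so once an ancestor of an $\omega$-net interval has characteristic vector in $\mathcal{E}$ all its descendants do as well; equivalently, the net intervals with characteristic vector outside $\mathcal{E}$ form a subtree of the $\omega$-net-interval tree; and (ii) by the preceding lemmas the vector $\beta$ of minimal normalized length and maximal number of neighbours is a descendant of every characteristic vector, with $\mathcal{E}$ equal to the set of its descendants, so $\beta$ has at least as much branching room as any characteristic vector. Following the deterministic argument of Feng \cite{F3} and of \cite{HHM,HHN}, transported to the $\omega$-indexed graph, one shows that along the subsequence realizing the $\limsup$ the number of level-$n$ $\omega$-net intervals whose characteristic vector is outside $\mathcal{E}$ is subexponentially small relative to $\lVert e_jA_{\omega_{J_j+1}}\cdots A_{\omega_{J_j+n}}\rVert$; combined with Steps~1 and~2 this gives ``$\leq$'', hence equality, and the lower-box-dimension statement follows verbatim with $\liminf$ in place of $\limsup$.

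I expect the main obstacle to be precisely this last step. In the deterministic case one may freely iterate the unique IFS to route any characteristic vector into $\mathcal{E}$, whereas here ``$\beta$ is a descendant of everything'' is only an existential statement over $\Omega$, not a statement about our fixed $\omega$. The argument must therefore avoid inserting a favourable finite word and instead rely on the subtree structure from (i) together with a bound, uniform because there are only finitely many characteristic vectors, on how fast the non-essential subtree can branch, and then show this bound does not beat the essential rate on the relevant subsequence; verifying that this bound is genuinely controlled by the essential class (and does not merely feed it) is the delicate point.
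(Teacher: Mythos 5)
Your first two steps are sound: the reduction of the box dimensions to counting net intervals, and the identification of the level-$(J_j+n)$ descendants of $\Delta_j$ with $\lVert e_jA_{\omega_{J_j+1}}\dots A_{\omega_{J_j+n}}\rVert$ (legitimate because $\mathcal{E}$ is closed under children) is exactly the counting the paper uses, and it gives the inequality ``$\geq$''. The gap is the reverse inequality, which is the real content of the proposition. You assert that the number of level-$n$ net intervals with characteristic vector outside $\mathcal{E}$ is subexponentially small relative to the essential count along the subsequence realizing the $\limsup$, but you give no argument, and you yourself observe that the deterministic mechanism you propose to import from Feng and \cite{HHM,HHN} (routing an arbitrary characteristic vector into $\mathcal{E}$ by inserting a favourable word) is unavailable, since the proposition concerns one fixed $\omega$ while ``descendant'' quantifies over all of $\Omega$. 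Nothing in your outline replaces that mechanism: the ``uniform bound on how fast the non-essential subtree can branch'' is never identified, and a crude branching bound cannot suffice, since a priori the non-essential part could branch as fast as the essential part. (Note also that what is needed is only $\limsup_n\frac{1}{n}\log\card\F_{\omega,n}\leq\limsup_n\frac{1}{n}\log\lVert e_jA_{\omega_{J_j+1}}\dots A_{\omega_{J_j+n}}\rVert$; your stronger ``subexponentially small along that subsequence'' formulation is neither required nor established.) As written, the crux is left open.

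The paper closes this by a localization you did not consider, which removes the non-essential intervals from the picture entirely. Since $\inte\Delta_j$ meets $K_\omega$, choose $N$ and $\sigma\in\Lambda_{\omega,N}$ with $S_{\omega,\sigma}([0,1])\subseteq\Delta_j$. By homogeneity and equicontractivity, $K_\omega=\bigcup_{\tau\in\Lambda_{\omega,N}}S_{\omega,\tau}\bigl(K_{\pi^{N}(\omega)}\bigr)$ is a finite union of translates of $r^{N}K_{\pi^{N}(\omega)}$, one of which, $S_{\omega,\sigma}\bigl(K_{\pi^{N}(\omega)}\bigr)$, lies inside $\Delta_j\cap K_\omega$. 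Hence at every scale the covering numbers of $K_\omega$ and of $\Delta_j\cap K_\omega$ differ by at most the fixed factor $\card\Lambda_{\omega,N}$, so the upper and lower box dimensions of $\Delta_j\cap K_\omega$ and of $K_\omega$ coincide, deterministically for the given $\omega$. It then suffices to count descendants of $\Delta_j$ alone: they cover $\Delta_j\cap K_\omega$, have length comparable to $r^{J_j+n}$, have disjoint interiors each meeting $K_\omega$, and, because $\gamma_j\in\mathcal{E}$, are all essential, so their number is exactly $\lVert e_jA_{\omega_{J_j+1}}\dots A_{\omega_{J_j+n}}\rVert$. This yields both inequalities at once and makes your Step 1 and all of Step 3 unnecessary.
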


\begin{proof}
Choose $\sigma $ such that $S_{\omega ,\sigma }([0,1])\cap K_{\omega
}\subseteq \Delta _{j}\cap K_{\omega }\subseteq K_{\omega }$. It follows
from these inclusions that the dimensions of $K_{\omega }$ and $\Delta
_{j}\cap K_{\omega }$ coincide.

Now $\Delta _{j}\cap K_{\omega }$ is covered by the net intervals of level $%
J_{j}+n$ that are descendants of $\gamma _{j}$ under the finite string $%
\omega _{J_{j}+1},\dots,\omega _{J_{j}+n}$, and these have length $\sim
r^{J_{j}+n}$. The number of such descendants is equal to the sum of the
entries of the $j$'th row of $A_{\omega _{J_{j}+1}}\dots A_{\omega
_{J_{j}+n}}$, in other words, $\lVert e_{j}A_{\omega _{J_{j}+1}}\dots
A_{\omega _{J_{j}+n}}\rVert $.

This shows the upper and lower box-counting dimensions are as claimed.
\end{proof}

We recall the statement of Kingman's subadditive ergodic theorem. We will
use it to prove the almost sure existence of the limit.

\begin{proposition}[Kingman's subadditive ergodic theorem]
\label{prop:Kingman} Let $F$ be a measure preserving transformation of the
probability space $(\Omega, \Prob)$ and $(g_{n})$ be a sequence of
integrable functions satisfying 
\begin{equation*}
g_{n+m}(x)\leq g_{n}(x)+g_{m}(F^{n}x).
\end{equation*}
Then, almost surely, 
\begin{equation*}
\lim_{n\rightarrow \infty }\frac{g_{n}(x)}{n}=g(x)\geq -\infty ,
\end{equation*}
where $g(x)$ is an $F$-invariant function. Additionally, if $F$ is an
ergodic transformation, $g$ is constant for almost all $x\in \Omega $.
\end{proposition}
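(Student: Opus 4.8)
This is a classical theorem, so the plan is simply to reproduce the standard modern proof via stopping times (in the style of Steele), phrased so as to cover both the ergodic and the general case. Write $\mathcal{G}$ for the $\sigma$-algebra of $F$-invariant sets and put $\gamma_n=\mathbb{E}[g_n\mid\mathcal{G}]$. Since $F$ preserves $\Prob$ and $\mathcal{G}$, conditioning the subadditivity relation $g_{n+m}\le g_n+g_m\circ F^n$ on $\mathcal{G}$ gives $\gamma_{n+m}\le\gamma_n+\gamma_m$ a.s., so by Fekete's subadditive lemma the $\mathcal{G}$-measurable functions $\gamma_n/n$ converge a.s.\ to $\gamma:=\inf_n\gamma_n/n$, with values in $[-\infty,\infty)$; when $F$ is ergodic, $\mathcal{G}$ is trivial and $\gamma=\inf_n\mathbb{E}[g_n]/n$ is constant. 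The goal is to prove $g_n/n\to\gamma$ a.s.

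First I would check that $\overline g:=\limsup_n g_n/n$ and $\underline g:=\liminf_n g_n/n$ are $F$-invariant. From $g_{n+1}\le g_1+g_n\circ F$, dividing by $n$ and letting $n\to\infty$, one gets $\overline g\le\overline g\circ F$ and $\underline g\le\underline g\circ F$ pointwise. Since $F$ preserves $\Prob$, for each rational $q$ the set $\{\overline g>q\}$ has the same probability as $F^{-1}\{\overline g>q\}=\{\overline g\circ F>q\}$, and the former is contained in the latter, so they coincide up to a null set; taking a union over $q$ gives $\overline g=\overline g\circ F$ a.s., and likewise for $\underline g$. This step needs no integrability of $\overline g$ or $\underline g$.

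Next, the upper bound $\overline g\le\gamma$ a.s. Fix $N$ and, for $n=qN+r$ with $0\le r<N$, iterate subadditivity in blocks of length $N$: $g_n\le\sum_{i=0}^{q-1}g_N\circ F^{iN}+g_r^+\circ F^{qN}$. Dividing by $n$ and letting $n\to\infty$, the first term converges a.s.\ (Birkhoff's ergodic theorem applied to the $\Prob$-preserving map $F^N$ and the integrable function $g_N$) to a limit whose conditional expectation given $\mathcal{G}\subseteq\mathcal{G}_N$ is $\tfrac1N\gamma_N$, while the second tends to $0$ a.s.\ because $g_r^+\in L^1$ forces $\sum_j\Prob(g_r^+\circ F^j>\varepsilon j)=\sum_j\Prob(g_r^+>\varepsilon j)<\infty$ for every $\varepsilon>0$. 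Since $\overline g$ is $\mathcal{G}$-measurable, taking conditional expectations gives $\overline g\le\gamma_N/N$ for every $N$, hence $\overline g\le\gamma$ a.s.

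The heart of the proof is the matching lower bound $\underline g\ge\gamma$. Assume first $\gamma>-\infty$. Fix $\varepsilon>0$ and let $\tau(x)=\min\{n\ge1:g_n(x)\le(\underline g(x)+\varepsilon)n\}$, which is a.s.\ finite by definition of $\underline g$; choose $M$ so that $\Prob(E_M)$ is small, where $E_M=\{\tau>M\}$, and set $\tau_M=\tau$ on $E_M^{c}$ and $\tau_M=1$ on $E_M$. For each $x$, cover $\{0,1,\dots,n-1\}$ greedily by successive jumps of size $\tau_M(F^{s}x)$; by subadditivity $g_n(x)$ is at most the sum of $g$ over these blocks, where a block started in $E_M^{c}$ contributes at most $(\underline g(F^{s}x)+\varepsilon)\cdot(\mathrm{length})=(\underline g(x)+\varepsilon)\cdot(\mathrm{length})$ (invariance of $\underline g$), each length-one block started in $E_M$ contributes at most $g_1^+(F^{s}x)$, and the final incomplete block (of length $\le M$) contributes at most $\max_{1\le r\le M}g_r^+(F^{s}x)$. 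Summing, dividing by $n$, and letting $n\to\infty$: Birkhoff controls the bad-step term $\tfrac1n\sum_{s<n}\mathbf 1_{E_M}(F^s x)g_1^+(F^s x)\to\mathbb{E}[\mathbf 1_{E_M}g_1^+\mid\mathcal G]$, and the final term tends to $0$ as before, so $\gamma\le\underline g+\varepsilon+\mathbb{E}[\mathbf 1_{E_M}g_1^+\mid\mathcal G]$ a.s. Letting $M\to\infty$ (so $\Prob(E_M)\to0$ and, by dominated convergence, $\mathbb{E}[\mathbf 1_{E_M}g_1^+\mid\mathcal G]\to0$) and then $\varepsilon\to0$ gives $\gamma\le\underline g$ a.s. With $\overline g\le\gamma$ and $\underline g\le\overline g$ this yields $\underline g=\overline g=\gamma$, i.e.\ $g_n/n\to\gamma$ a.s.; $\gamma$ is $\mathcal G$-measurable, hence $F$-invariant, and constant when $F$ is ergodic. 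The case $\gamma=-\infty$ follows by applying the argument to the truncated cocycles $g_n^{(c)}=g_n\vee(-cn)$, which are subadditive with finite limit $\gamma^{(c)}$, and letting $c\to\infty$. I expect the main obstacle to be making this greedy-covering step fully rigorous: correctly accounting for the bad steps and the leftover block and verifying their normalized contributions vanish after conditioning on $\mathcal G$, together with the truncation bookkeeping when $\gamma=-\infty$.
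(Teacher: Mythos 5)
The paper offers no internal proof of this proposition (it simply cites Steele), and your proposal is precisely a Steele-style argument: block decomposition plus Birkhoff for the upper bound, and a stopping-time/greedy-covering argument for the lower bound, with the conditional expectations $\gamma_n=\E[g_n\mid\mathcal{G}]$ identifying the limit. That is the right approach, and everything up to and including the upper bound $\overline{g}\le\gamma$ (subadditivity of $\gamma_n$, Fekete, invariance of $\overline{g},\underline{g}$, the Borel--Cantelli control of the remainder block) is sound.

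There is, however, a genuine gap in the lower-bound step as written. You define $\tau(x)=\min\{n\ge1:g_n(x)\le(\underline{g}(x)+\varepsilon)n\}$ and assert it is a.s.\ finite ``by definition of $\underline{g}$'', but a priori $\underline{g}=-\infty$ may hold on a set of positive measure even when $\gamma>-\infty$, and on that set the condition is never satisfied, so the covering argument never starts. Your proposed remedy, truncating the cocycle to $g_n^{(c)}=g_n\vee(-cn)$, does not repair this in the $\gamma>-\infty$ case: since $g_n\le g_n^{(c)}$, the truncated theorem only transfers \emph{upper} bounds (it gives $\overline{g}\le\gamma^{(c)}$, which is exactly what is needed when $\gamma=-\infty$, but says nothing about $\underline{g}$). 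The standard fix is to truncate the level rather than the cocycle: define $\tau$ with $\max(\underline{g},-c)+\varepsilon$ in place of $\underline{g}+\varepsilon$ (finite a.s., since on $\{\underline{g}<-c\}$ one has $g_n/n<-c+\varepsilon$ infinitely often), run the covering argument, conclude $\gamma\le\max(\underline{g},-c)$ after $M\to\infty$, $\varepsilon\to0$, and then let $c\to\infty$. Relatedly, the step ``dividing by $n$ and letting $n\to\infty$'' produces $\overline{g}$, not $\gamma$, on the left; to land $\gamma\le\max(\underline{g},-c)+\varepsilon+\E[\mathbf{1}_{E_M}g_1^{+}\mid\mathcal{G}]$ you must first take $\E[\,\cdot\mid\mathcal{G}]$ of the block decomposition and use $\gamma_n\ge n\gamma$ together with the $\mathcal{G}$-measurability of $\underline{g}$; and because the covered length $C_n$ may be less than $n$ while the level can be negative, you need the correction term of size at most $(c+\varepsilon)(n-C_n)\le(c+\varepsilon)(\sum_{s<n}\mathbf{1}_{E_M}(F^sx)+M)$, which is again where the truncation at $-c$ is what makes the bookkeeping close. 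With these repairs (all standard) your argument goes through and proves somewhat more than the paper needs, since the statement only requires a.s.\ existence of the limit, its $F$-invariance, and constancy in the ergodic case.
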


For a concise proof we refer the reader to Steele~\cite{Steele89}.

\begin{proposition}
\label{essdim} For a.a.\ $\omega $, the box-counting dimension and Hausdorff
dimension of $K_{\omega } $ is equal to 
\begin{equation*}
\lim_{n\rightarrow \infty }\frac{\log \left\Vert A_{\omega _{1}}\dots
A_{\omega _{n}}\right\Vert }{n\lvert\log r\rvert}.
\end{equation*}
\end{proposition}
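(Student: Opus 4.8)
The plan is to combine Kingman's subadditive ergodic theorem (Proposition \ref{prop:Kingman}) with the essential-class reduction of Proposition \ref{prop:essential}. Write $N=\card\mathcal{E}$, so each $A_k$ is an $N\times N$ matrix with entries in $\{0,1\}$; since $\mathcal{E}$ is closed under children, every row of every $A_k$ has a non-zero entry, and hence $1\le\lVert A_{\omega_1}\cdots A_{\omega_n}\rVert\le N^{n+1}$. First I would set $g_n(\omega)=\log\lVert A_{\omega_1}\cdots A_{\omega_n}\rVert$ and $F=\pi$, the shift. Because $\lVert XY\rVert\le\lVert X\rVert\lVert Y\rVert$ for non-negative matrices in the norm $\lVert(M_{ij})\rVert=\sum_{ij}\lvert M_{ij}\rvert$, we get $g_{n+m}(\omega)\le g_n(\omega)+g_m(\pi^n\omega)$; the $g_n$ are bounded, hence integrable; and $\Prob$ is $\pi$-invariant and ergodic. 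So Proposition \ref{prop:Kingman} gives that $\lim_n g_n(\omega)/n=c$ exists for a.a.\ $\omega$ and equals a constant. It then remains to identify $c/\lvert\log r\rvert$ with $\dim_B K_\omega$ and with $\dim_H K_\omega$ for a.a.\ $\omega$.

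Next I would verify that for a.a.\ $\omega$ some characteristic vector of $\mathcal{E}$ actually occurs at a finite level, so that Proposition \ref{prop:essential} applies. Let $\Gamma_n(\omega)$ be the finite set of characteristic vectors of level-$n$ $\omega$-net intervals. Then $(\Gamma_n(\omega))_n$ is a Markov chain on the finite family of subsets of characteristic vectors, with $\Gamma_{n+1}$ a deterministic function of $\Gamma_n$ and the $\Prob$-distributed letter $\omega_{n+1}$. By the lemma that $\beta$ (the characteristic vector of minimal normalized length and maximal number of neighbours) is a descendant of \emph{every} characteristic vector, from any state $S$ there is a word $\nu$, of length bounded uniformly over states, such that reading $\nu$ from $S$ lands in a subset containing $\beta\in\mathcal{E}$; moreover subsets meeting $\mathcal{E}$ are absorbing, since children of members of $\mathcal{E}$ lie in $\mathcal{E}$. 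As every letter has positive $\Prob$-probability, the chain reaches an absorbing subset almost surely. Thus for a.a.\ $\omega$ I may fix $\gamma_j\in\mathcal{E}$ and a level $J_j$ with $\gamma_j\in\Gamma_{J_j}(\omega)$, and Proposition \ref{prop:essential} gives $\overline{\dim}_B K_\omega=\limsup_n\frac{\log\lVert e_jA_{\omega_{J_j+1}}\cdots A_{\omega_{J_j+n}}\rVert}{n\lvert\log r\rvert}$, and the analogous statement with $\liminf$ for $\underline{\dim}_B K_\omega$.

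For the upper bound, $\lVert e_jA_{\omega_{J_j+1}}\cdots A_{\omega_{J_j+n}}\rVert\le\lVert A_{\omega_{J_j+1}}\cdots A_{\omega_{J_j+n}}\rVert$, and for each fixed integer $J$ one has $\frac1n\log\lVert A_{\omega_{J+1}}\cdots A_{\omega_{J+n}}\rVert\to c$ for a.a.\ $\omega$ (apply Kingman to $\pi^J\omega$); intersecting these countably many full-measure sets, $\overline{\dim}_B K_\omega\le c/\lvert\log r\rvert$. For the lower bound I would apply Kingman twice more, to the \emph{subadditive} cocycle $\log\lVert A_{\omega_1}\cdots A_{\omega_n}\rVert_{\mathrm{row},\max}$ (whose Lyapunov exponent is again $c$, since $\lVert\cdot\rVert_{\mathrm{row},\max}$ is comparable to $\lVert\cdot\rVert$ up to the factor $N$) and to the \emph{superadditive} cocycle $\log\lVert A_{\omega_1}\cdots A_{\omega_n}\rVert_{\mathrm{row},\min}$, which converges to some $c_{\min}\le c$; since $\lVert e_jA_{\omega_{J_j+1}}\cdots A_{\omega_{J_j+n}}\rVert\ge\lVert A_{\omega_{J_j+1}}\cdots A_{\omega_{J_j+n}}\rVert_{\mathrm{row},\min}$, once $c_{\min}=c$ is established we get $\underline{\dim}_B K_\omega\ge c/\lvert\log r\rvert$, and, squeezed with the upper bound, both box dimensions equal $c/\lvert\log r\rvert$. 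Finally $\dim_H K_\omega\le\underline{\dim}_B K_\omega$ always, and the matching lower bound should follow from a mass-distribution argument on $\Delta_j\cap K_\omega$: distribute mass among descendant net intervals according to the essential-class transition counts (a weighted, not uniform, allocation), and use that the finite-type property makes each ball $B(x,r^n)$ comparable to a single net interval, exactly as in the deterministic finite-type setting.

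The hard part will be proving $c_{\min}=c$ — equivalently, that iterating from the single fixed essential-class vertex $\gamma_j$ already attains the top exponential growth rate. Irreducibility of $\mathcal{E}$ (the loop-class property) supplies, for each ordered pair of characteristic vectors in $\mathcal{E}$, a word routing one to the other; the obstacle is positional, since $\omega$ is given and one cannot prescribe where these finitely many routing words occur. The resolution I have in mind is that a.a.\ $\omega$ contains each routing word with positive frequency (Birkhoff's ergodic theorem for $\pi$), so the minimal row sum lags the maximal one by only a sublinear amount and the two exponents coincide. Making this delay estimate precise, together with the weighted mass allocation needed for the Hausdorff lower bound, is the only genuinely delicate point; everything else is bookkeeping with Kingman's theorem and the finite Markov chain on characteristic-vector subsets.
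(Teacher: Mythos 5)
Your first half is fine and in places cleaner than the paper's own argument: Kingman applied to $g_n(\omega)=\log\lVert A_{\omega_1}\cdots A_{\omega_n}\rVert$ gives the a.s.\ constant $c$, the absorbing-chain argument (routing words of bounded length plus positivity of each $\theta_j$, with subsets meeting $\mathcal{E}$ absorbing) legitimately produces an essential characteristic vector at a finite level of $\omega$ itself, so Proposition \ref{prop:essential} applies directly to $K_\omega$ rather than to a word with a prepended prefix as in the paper, and the upper bound $\overline{\dim}_B K_\omega\le c/\lvert\log r\rvert$ by intersecting the countably many full-measure sets over $J$ is correct. The genuine gap is exactly where you flag it: the lower bound. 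Your plan reduces it to $c_{\min}=c$ for the min-row-sum cocycle, and the proposed resolution (Birkhoff gives each routing word positive frequency, hence ``the minimal row sum lags the maximal one by only a sublinear amount'') is not a proof and does not obviously close. The difficulty is not just positional: even when a routing word occurs shortly after level $J_j$, irreducibility of $\mathcal{E}$ only supplies a word per ordered pair, so you must already be sitting at the correct source vertex when that word begins, and you must land on the particular row that realizes the maximal row sum of the remaining tail product, which is itself determined by the whole future of $\omega$. Concatenating routing words only guarantees re-entry into $\mathcal{E}$, not arrival at a prescribed vertex (after the first segment you sit at some descendant, not necessarily the intended one), so without a synchronization/positivity statement for a single word's adjacency product the ``sublinear delay'' estimate is unsubstantiated. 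The same goes for the Hausdorff lower bound: the weighted mass-distribution construction is asserted, not carried out, and it is not routine, since the norm growth rate alone does not hand you a measure with the right local behaviour.

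It is worth seeing how the paper avoids both issues. For each $j$ it identifies the row exponent geometrically: $\lVert e_jA_{\omega_1}\cdots A_{\omega_n}\rVert$ counts the level-$(N_j+n)$ net-interval descendants of a net interval $\Delta_j$ with characteristic vector $\gamma_j$, and the sandwich $S_{\omega,\sigma}([0,1])\cap K\subseteq\Delta_j\cap K\subseteq K$ from Proposition \ref{prop:essential} shows this exponent equals $\lvert\log r\rvert$ times the box dimension of the ambient attractor, which is a.s.\ the same constant for every $j$. Then a pigeonhole observation -- some index $j$ attains the maximal row sum for infinitely many $n$ -- together with the a.s.\ existence of both limits and the comparability of $\lVert\cdot\rVert$ with $\lVert\cdot\rVert_{\text{row},\max}$ ties the full-norm exponent $c$ to that constant; no estimate comparing minimal and maximal row sums is ever needed. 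Finally, the equality of Hausdorff and upper box dimension of $K_\omega$ a.s.\ is quoted from \cite{Troscheit17} rather than proved by a mass distribution argument. If you replace your $c_{\min}=c$ step by this identification-plus-pigeonhole argument (or prove a genuine synchronization lemma for the essential adjacency matrices), and either cite \cite{Troscheit17} or actually construct the measure for the Hausdorff bound, your outline becomes a complete proof; as written, the two deferred points are precisely the substance of the proposition.
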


\begin{proof}
For each $\gamma _{j}\in \mathcal{E}$ there is some finite word $\nu _{j}\in
\bigcup_{k}\mathcal{A}^{k}$ such that $\gamma _{j}$ is the characteristic
vector of the $\nu _{j}$-net interval $\Delta _{j}$ of level $N_{j}$. As in
the proof of the proposition above, $\left\Vert e_{j}A_{\omega _{1}}\dots
A_{\omega _{n}}\right\Vert $ is the number of $(\nu _{j}\omega )$ -net
subintervals that are descendants of $\Delta _{j}$ (under the word $(\nu
_{j}\omega )$) at level $N_{j}+n$.

Let $g_n(\omega)=\log\lVert e_jA_{\omega_1}A_{\omega_2}\dots
A_{\omega_n}\rVert$. We note that the usual matrix norm is
submultiplicative, thus $g_n$ is subadditive with respect to the ergodic
shift map $\pi(\omega_{1},\omega_2,\dots)=(\omega_2,\omega_3,\dots)$. An
application of Kingman's subadditive ergodic theorem shows that almost
surely 
\begin{equation*}
\lim_{n}\log \left\Vert e_{j}A_{\omega _{1}}\dots A_{\omega _{n}}\right\Vert
/n
\end{equation*}
exists.

As above, the dimensions of $\Delta _{j}\cap K_{\nu _{j}\omega }$ and $%
K_{\nu _{j}\omega }$ coincide, hence 
\begin{equation*}
\lim_{n\rightarrow \infty }\frac{\log\left\Vert e_{j}A_{\omega _{1}}\dots
A_{\omega _{n}}\right\Vert }{n\lvert\log r\rvert}=\dim _{B}\left( \Delta _{j}\cap
K_{\nu _{j}\omega }\right) =\dim _{B}K_{\nu _{j}\omega }.
\end{equation*}
There is an index $j$ such that for infinitely many $n$, 
\begin{equation*}
\left\Vert A_{\omega _{1}}\dots A_{\omega _{n}}\right\Vert _{\text{row},\max
}=\left\Vert e_{j}A_{\omega _{1}}\dots A_{\omega _{n}}\right\Vert .
\end{equation*}

Another application of Kingman's subadditive ergodic theorem shows that
almost surely $\lim_{n}\log \left\Vert A_{\omega _{1}}\dots A_{\omega
_{n}}\right\Vert /n$ exists. Since the maximum row sum norm is comparable to
the usual matrix norm, for this choice of $j$ we have, almost surely, 
\begin{equation*}
\lim_{n\rightarrow \infty }\frac{\log\left\Vert e_{j}A_{\omega _{1}}\dots
A_{\omega _{n}}\right\Vert }{n\lvert\log r\rvert}=\lim_{n\rightarrow \infty }\frac{%
\log\left\Vert A_{\omega _{1}}\dots A_{\omega _{n}}\right\Vert }{n\lvert\log r\rvert}.
\end{equation*}
Finally, we know that the upper box-counting dimension coincides with the
Hausdorff dimension of $K_{\omega }$ a.s.\ (see \cite{Troscheit17}), which
proves the claim.
\end{proof}

Given $\omega \in \Omega $, we call $x\in K_{\omega }$ an \textit{essential
point} if $x$ has $\omega $-symbolic representation $[x]_{\omega }=(\gamma
_{0},\gamma _{1},\dots)$ where there is some $J$ such that $\gamma _{j}\in 
\mathcal{E}$ for all $j\geq J$. As in the single IFS case, these points have
full $\mu _{\omega }$-measure for a.a.\ $\omega $.

\begin{proposition}
For a.a.\ $\omega $, the set $\mathcal{T}$ of non-essential points in $%
K_{\omega }$ has $\mu _{\omega }$-measure zero.
\end{proposition}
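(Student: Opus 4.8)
The plan is to follow the argument used for a single IFS in \cite{HHM} (see also Feng \cite{F3}), adding the one extra ingredient forced by the randomness. Since $\mathcal{E}$ is an essential class it is absorbing: every child of a characteristic vector in $\mathcal{E}$ again lies in $\mathcal{E}$, so if $x\in K_\omega$ has $\omega$-symbolic representation $(\gamma_0,\gamma_1,\dots)$ and $\gamma_j\in\mathcal{E}$ for some $j$, then $\gamma_{j'}\in\mathcal{E}$ for all $j'\ge j$; thus $x$ is essential exactly when some $\gamma_j\in\mathcal{E}$. Writing
\begin{equation*}
T_n(\omega)=\bigcup\bigl\{\Delta\in\F_{\omega,n}:\C_n^\omega(\Delta)\notin\mathcal{E}\bigr\},
\end{equation*}
the sets $T_n(\omega)$ decrease in $n$ and $\mathcal{T}\subseteq\bigcap_n T_n(\omega)$, so it suffices to prove $\mu_\omega(T_n(\omega))\to 0$ for $\Prob$-a.a.\ $\omega$. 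Note also that, because of the assumption $\diam K_\omega=1$, the combinatorial data at level $n$ (which intervals are net intervals, their characteristic vectors, and their children) depends only on $\omega_1,\dots,\omega_n$; in particular admissibility of a $\nu$-path is a property of the word $\nu$ alone.

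First I would prove a combinatorial lemma: there is a finite word $\nu^\ast=(\nu^\ast_1,\dots,\nu^\ast_L)$ such that, starting from \emph{any} characteristic vector and reading $\nu^\ast$, some admissible path ends in $\mathcal{E}$. This is built one characteristic vector at a time from the lemma that $\mathcal{E}$ contains a characteristic vector $\beta$ which is a descendant of every characteristic vector. Enumerate the finitely many characteristic vectors $\gamma^{(1)},\dots,\gamma^{(M)}$; given a word $w$ already good for $\gamma^{(1)},\dots,\gamma^{(j-1)}$, pick any admissible $w$-path out of $\gamma^{(j)}$ ending at some $\eta$, pick a word $u$ realizing an admissible path from $\eta$ to $\beta$, and replace $w$ by $wu$. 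Since $\mathcal{E}$ is absorbing and every characteristic vector has at least one child under each letter (again a consequence of $\diam K_\omega=1$), the word $wu$ is still good for $\gamma^{(1)},\dots,\gamma^{(j-1)}$, and after $M$ steps we obtain $\nu^\ast$.

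The key estimate is then: there is $\varepsilon\in(0,1)$ such that for every $\omega$ and every $n_0\ge 0$,
\begin{equation*}
\omega_{n_0+1}\cdots\omega_{n_0+L}=\nu^\ast\ \Longrightarrow\ \mu_\omega\bigl(T_{n_0+L}(\omega)\bigr)\le(1-\varepsilon)\,\mu_\omega\bigl(T_{n_0}(\omega)\bigr).
\end{equation*}
To prove it, group the non-essential net intervals of level $n_0+L$ by their level-$n_0$ ancestor; by the absorbing property this ancestor $\widehat\Delta$ is itself non-essential. Fix such a $\widehat\Delta$ with $\C(\widehat\Delta)=\gamma\notin\mathcal{E}$. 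The lemma gives an admissible $\nu^\ast$-path from $\gamma$ into $\mathcal{E}$, hence a chain of $\omega$-net intervals $\widehat\Delta=\Delta^{(0)}\supseteq\Delta^{(1)}\supseteq\dots\supseteq\Delta^{(L)}$ with $\C(\Delta^{(L)})\in\mathcal{E}$. Using the comparison $\mu_\omega(\Delta)\sim\|T_\omega(\gamma_0,\dots,\gamma_n)\|$ with constants independent of $\omega$, together with the fact that a transition matrix has no zero column (so $\|BA\|\ge c_A\|B\|$ with $c_A>0$ depending only on $A$, and $A$ here ranges over a finite set of products of primitive transition matrices), one gets $\mu_\omega(\Delta^{(L)})\ge\varepsilon\,\mu_\omega(\widehat\Delta)$ with $\varepsilon>0$ uniform, since there are only finitely many characteristic vectors. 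Because the level-$(n_0+L)$ descendants of $\widehat\Delta$ partition it up to finitely many endpoints of zero $\mu_\omega$-measure, the non-essential ones carry at most $(1-\varepsilon)\mu_\omega(\widehat\Delta)$; summing over all non-essential $\widehat\Delta$ gives the displayed inequality. As $\mu_\omega(T_{m+1}(\omega))\le\mu_\omega(T_m(\omega))$ always holds, it follows that if $N_\omega(k)$ of the blocks $(\omega_{(i-1)L+1},\dots,\omega_{iL})$, $1\le i\le k$, equal $\nu^\ast$, then $\mu_\omega(T_{kL}(\omega))\le(1-\varepsilon)^{N_\omega(k)}$.

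Finally, under $\Prob$ the blocks $(\omega_{(i-1)L+1},\dots,\omega_{iL})$ are i.i.d., each equal to $\nu^\ast$ with probability $\prod_{j=1}^{L}\theta_{\nu^\ast_j}>0$ because every $\theta_j>0$; by the second Borel--Cantelli lemma, $N_\omega(k)\to\infty$ for $\Prob$-a.a.\ $\omega$, whence $\mu_\omega(T_{kL}(\omega))\to 0$ and, by monotonicity, $\mu_\omega(\mathcal{T})=\lim_n\mu_\omega(T_n(\omega))=0$ for a.a.\ $\omega$. I expect the third paragraph — the uniform one-block contraction of the non-essential mass — to be the main obstacle: it is where the combinatorial lemma on $\nu^\ast$ and the quantitative transition-matrix estimates (uniformity of the comparability constants and the no-zero-column property) must be combined carefully, while the probabilistic last step is routine.
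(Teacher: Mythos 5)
Your overall strategy is the one the paper uses: build a single finite word $\nu^\ast$ that carries every characteristic vector into the essential class, show that each occurrence of $\nu^\ast$ in $\omega$ strips a fixed proportion of the non-essential mass, and finish with the almost sure occurrence of $\nu^\ast$ infinitely often (your i.i.d.\ block/Borel--Cantelli step is the paper's choice of the full-measure set of $\omega$ containing $\nu$ as a substring infinitely often, and your construction of $\nu^\ast$ by concatenation is the paper's ``concatenating words, as needed'' remark).

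The gap is exactly in the step you flag as the main obstacle, the uniform bound $\mu_\omega(\Delta^{(L)})\ge\varepsilon\,\mu_\omega(\widehat\Delta)$. You justify it by ``a transition matrix has no zero column, so $\|BA\|\ge c_A\|B\|$'', but that inequality points the wrong way: since $\|BA\|=\sum_{i,j}B_{ij}\,(\text{$j$-th row sum of }A)$, a bound of the form $\|BA\|\ge c_A\|B\|$ needs every \emph{row} of $A$ to contain a non-zero entry. The no-zero-column property only yields $\|AB\|\ge c_A\|B\|$, i.e.\ control when a block is removed from or prepended on the \emph{left}; that is how the paper gets constants depending only on $N$ in $\mu_\omega(\Delta_{\omega,n})\sim\lVert T_\omega(\gamma_N,\dots,\gamma_n)\rVert$. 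By the paper's own remark, all rows of a primitive transition matrix are non-zero only when $\supp\mu_\omega=[0,1]$, which is not assumed in this proposition (and fails, e.g., in the example of Section 6.3). Concretely, the vector $Q_{\omega,n_0}(\widehat\Delta)$ may have almost all of its mass on neighbours of $\widehat\Delta$ whose codings admit no extension covering the particular descendant $\Delta^{(L)}$, so $\lVert Q_{\omega,n_0}(\widehat\Delta)\,T_\omega(\gamma_{n_0},\dots,\gamma_{n_0+L})\rVert$ need not be bounded below by a constant times $\lVert Q_{\omega,n_0}(\widehat\Delta)\rVert$. The paper sidesteps the matrices here and instead uses the measure-theoretic fact that any net subinterval lying $k$ levels below a net interval $\Delta$ has $\mu_\omega$-measure at least $p_{\min}^{k}\mu_\omega(\Delta)$; replacing your matrix estimate by that fact (with $k=L$, applied to the essential descendant produced by the $\nu^\ast$-path) repairs the step, after which the rest of your argument goes through essentially as in the paper.
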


\begin{proof}
Here we use the fact that if $\Delta $ is an $\omega $-net interval and $%
\Delta ^{\prime }$ is a net subinterval $k$ levels lower, then $\mu _{\omega
}(\Delta ^{\prime })\geq p_{\min }^{k}\mu _{\omega }(\Delta )=\delta \mu
_{\omega }(\Delta )$ for a positive constant $\delta $ (depending on $k$).

By concatenating words, as needed, one can see that there is a finite word $%
\nu =(\nu _{1},\dots ,\nu _{k})$ such that given any characteristic vector $%
\gamma $, there is a $\nu $-admissible path linking $\gamma $ to an
essential characteristic vector. The set of full measure in $\Omega $ that
we take will be the $\omega ^{\prime }s$ for which $\nu $ appears as a
substring infinitely often.

Assume $\omega =(\omega _{1},\dots,\omega _{N},\nu _{1},\dots,\nu
_{k},\dots) $ and let $\Delta _{0}$ be any $\omega$-net interval of level $N$%
. At least one of the level $k$ descendants of $\Delta _{0}$ is an essential
net interval (i.e., its characteristic vector is essential), hence $\mu
_{\omega }(\Delta _{0}\cap \mathcal{T)}\leq (1-\delta )\mu _{\omega }(\Delta
_{0})$. Repeated application of this argument along the infinitely many
disjoint substrings of $\omega $ of the form $\nu $ shows that $\mu _{\omega
}(\Delta _{0}\cap \mathcal{T})\leq (1-\delta )^{n}\mu _{\omega }(\Delta
_{0}) $ for all $n$ and hence $\Delta _{0}\cap \mathcal{T}$ has measure
zero. As there are only finitely many $\omega$-net intervals at level $N$,
it follows that $\mu _{\omega }(\mathcal{T)}=0$.
\end{proof}

\begin{remark}
In the case of a deterministic IFS of finite type the essential class has
the property that the set of local dimensions at points in the essential
class is a closed interval. This was proven in \cite{HHM} and relied heavily
upon properties of `periodic points', which seem to have no analogue in the
random case. It would be interesting to know whether it was still true that
a similar conclusion holds for the local dimension theory in the random
case. In the next section (see Theorem \ref{CommutingThm}) we prove that the
(full) set of attainable local dimensions \textit{is} a closed interval for
a special family of examples of RIFS of finite type.
\end{remark}

\section{The Local Dimension Theory for a Commuting Case}

\label{sec:5}

\subsection{Local dimensions of commuting RIFS of finite type}

In this section we consider a special case when the RIFS is not only of
finite type, but also the transition matrices have a commuting-like
property, which we now explain. We continue to assume that $\mathrm{hull}%
(K_\omega) = [0,1]$ for all $\omega \in \Omega$.

\begin{definition}
Let $\fS$ be a RIFS of finite type. The finite word $\nu \in \mathcal{A}^{k}$
is called a \textbf{sink} if there is a reduced characteristic vector $\beta 
$ with the property that given any characteristic vector $\gamma $, there is
an admissible $\nu $-path linking $\gamma $ to $\beta $ and there is no
admissible $\nu $-path linking $\gamma $ to any other reduced characteristic
vector.

In this case, we also say that the RIFS has a sink and we call $\beta $ a 
\textbf{sink characteristic vector}.

We say the RIFS is \textbf{commuting} if the sink characteristic vector has
only one element in its neighbour set.
\end{definition}

We call this property `commuting' because it means that the transition
matrix of any admissible path linking a sink characteristic vector to itself
is a scalar. In Proposition \ref{ExCommuting} we exhibit a family of RIFS
that are commuting. See also Subsection \ref{CommExAnal} where we analyze a
specific example of a commuting RIFS in detail.

\begin{definition}
Suppose the RIFS has a sink $\nu $. Let $\omega \in \Omega $ be given and
denote by $N_{j}=N_{j}(\omega )$ the index of the last letter of the $j$'th
occurrence of $\nu $ as a substring in $\omega $. We call $N_{j}$ the $j$'th 
\textbf{neck level}.
\end{definition}

Observe that if $x\in K_{\omega }$ has symbolic representation $[x]_{\omega
}=(\gamma _{0},\dots,\gamma _{N_{1}},\dots,\gamma _{N_{2}},\dots)$, then $%
\gamma _{N_{j}}$ is the sink characteristic vector for each $j$.

Assume the sink $\nu $ is a word of length $k$. If we let $p_{0}=\mathbb{%
P\{\omega }:(\omega _{1},\dots,\omega _{k})=\nu \}$, then $p_{0}>0$. The
probability that none of the first $n$ blocks of $k$ letters is the word $%
\nu $ is $(1-p_{0})^{n}$. Thus, the first neck level has expectation 
\begin{equation*}
\E(N_{1}(\omega ))\leq \sum_{i=1}^{\infty }(1-p_{0})^{i-1}(k\,i)<\infty .
\end{equation*}
In particular, almost all $\omega $ have infinitely many neck levels, i.e.,
the sink recurs as a substring in $\omega $ infinitely often.

It was shown in Theorem \ref{regLocDim} that for regular measures, $\mu
_{\omega }$, associated with a RIFS of finite type, we have the local
dimension formula 
\begin{equation*}
\dimlc\mu _{\omega }(x)=\lim_{n\rightarrow \infty }\frac{\log \mu _{\omega
}(\Delta _{\omega ,n}(x))}{n\log r}=\lim_{n\rightarrow \infty }\frac{\log
\lVert T_\omega(\gamma _{0},\gamma _{1},\dots,\gamma _{n})\rVert }{n\log r}
\end{equation*}
when $x\in K_{\omega }$ has symbolic representation $[x]_{\omega }=(\gamma
_{0},\gamma _{1},\dots)$. Hence we are interested in studying the Lyapunov
exponents, \thinspace $\lim_{n}\log \lVert T_\omega(\gamma _{0},\gamma
_{1},\dots,\gamma _{n})\rVert /n$. We will be able to analyze these in the
commuting case. Thus for the remainder of this section we will assume that
the RIFS $\fS$ is commuting and that the probabilities $\{p_{j,k}\}$ are
chosen so that the random self-similar measures are regular.

Our first step will be to check that it is enough to study these limits
taken along the subsequence $(N_{j})$. This will be helpful since the
commuting property ensures that the `block' transition matrices along a path
from one sink characteristic vector to the next, 
\begin{equation*}
\mathbf{B}_{j}(\gamma )=\mathbf{B}_{\omega ,j}(\gamma ):=T_\omega(\gamma
_{N_{j-1}},\dots,\gamma _{N_{j}})\text{ for }\gamma =(\gamma _{j})\text{ }
\end{equation*}
(here $N_{0}=0$), are scalars and hence commute with all transition
matrices. We need a preliminary technical result.

\begin{lemma}
\label{lem:vanishingratio} Almost surely, the relative gaps between
successive necks become arbitrarily small, that is 
\begin{equation*}
\frac{N_{j+1}-N_{j}}{N_{j}}\rightarrow 0\quad \text{ a.s.}
\end{equation*}
\end{lemma}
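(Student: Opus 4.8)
The plan is to reduce the statement to a statement about the gaps between consecutive occurrences of the sink word $\nu$ in $\omega$. Since $N_j$ is the index of the last letter of the $j$th occurrence of $\nu$, the differences $G_j := N_{j+1} - N_j$ are (up to the bounded length $k$ of $\nu$) governed by a renewal-type process: if we think of the positions where $\nu$ begins as a stationary, ergodic point process on $\mathbb{N}$ under $\Prob$, then the inter-occurrence times $G_j$ form a stationary ergodic sequence with finite mean, by the same computation already carried out in the excerpt, $\E(N_1(\omega)) \le \sum_{i=1}^\infty (1-p_0)^{i-1}(ki) < \infty$, which applies verbatim to each $G_j$ by shift-invariance. (Strictly, successive occurrences of $\nu$ may overlap; but replacing $\nu$ by a long enough power $\nu^k$ if necessary, or simply noting $G_j \le$ the waiting time for a fresh disjoint copy of $\nu$, gives a stationary ergodic upper bound with finite mean, which is all we need.) So first I would set up this renewal framework and record that $(G_j)$ is stationary ergodic and integrable.

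Next I would apply the pointwise (Birkhoff) ergodic theorem to $(G_j)$: almost surely $\frac{1}{j}\sum_{i=1}^{j} G_i \to \E(G_1) =: c < \infty$, and since $N_j = N_0 + \sum_{i=1}^{j} G_i$ with $N_0$ bounded, we get $N_j / j \to c$ almost surely. In particular $N_j \to \infty$ a.s. (which we already knew) and, more importantly, $N_j$ grows linearly in $j$.

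The key step — and the one I expect to be the main obstacle — is passing from "$N_j/j \to c$" to "$(N_{j+1}-N_j)/N_j \to 0$", i.e. from a Cesàro statement to a statement about individual increments. This does not follow from integrability of $G_1$ alone for a general stationary sequence (individual terms of an i.i.d. sequence with finite mean need not be $o(\text{partial sum})$ — but in fact they are, by Borel–Cantelli: $\Prob(G_j > \varepsilon j)$ is summable when $\E G_1 < \infty$... no, that is false in general). The clean route is instead: write $\frac{G_j}{N_j} = \frac{N_{j+1}-N_j}{N_j} = \frac{N_{j+1}}{N_j} - 1$, and since both $N_{j+1}/(j+1) \to c$ and $N_j/j \to c$ with $c \in (0,\infty)$, we get $N_{j+1}/N_j = \frac{N_{j+1}/(j+1)}{N_j/j}\cdot\frac{j+1}{j} \to \frac{c}{c}\cdot 1 = 1$ almost surely. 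Hence $\frac{N_{j+1}-N_j}{N_j} \to 0$ a.s., which is exactly the claim. The only subtlety to double-check is that $c > 0$, so that the ratio $\frac{N_{j+1}/(j+1)}{N_j/j}$ is legitimately a ratio of quantities with a nonzero finite limit; but $c = \E(G_1) \ge 1$ trivially since $G_1$ is a positive integer, so this is automatic.

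In summary, the whole argument is: (i) identify $(N_{j+1}-N_j)_j$ as a stationary ergodic integrable sequence via the geometric waiting-time bound already in the paper; (ii) apply Birkhoff to obtain $N_j/j \to c$ with $1 \le c < \infty$ a.s.; (iii) conclude $N_{j+1}/N_j \to 1$ and therefore $(N_{j+1}-N_j)/N_j \to 0$ a.s. No delicate large-deviation or Borel–Cantelli estimate is actually needed once one works with the ratio $N_{j+1}/N_j$ rather than with $G_j$ directly, which is why I would organize the proof this way rather than attempting to bound individual gaps.
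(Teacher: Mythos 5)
Your proposal is correct and follows essentially the same route as the paper: apply Birkhoff's ergodic theorem to the stationary, integrable gap sequence generated by the neck shift to get $N_j/j\to\E(N_1)\in[1,\infty)$ almost surely, and then deduce the ratio statement by elementary algebra (the paper passes through $(N_{j+1}-N_j)/j\to 0$ and $N_j\geq j$, you through $N_{j+1}/N_j\to 1$, which is the same computation). The hedging in your step (i) is unnecessary but harmless, since the paper likewise treats the increments as $N_1$ evaluated along iterates of the measure-preserving neck shift.
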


\begin{proof}
Recall that $\pi $ is the shift map. Note that $N_{j}(\omega )=N_{1}(\pi
^{N_{j-1}(\omega )}(\omega ))$ and this is independent of $N_{1}(\omega )$
for every $j$. Thus, by Birkhoff's ergodic theorem, almost surely 
\begin{equation}
\lim_{j\rightarrow \infty }\frac{N_{j}(\omega )}{j}=\lim_{j\rightarrow
\infty }\frac{\sum_{i=1}^{j}(N_{i}(\omega )-N_{i-1}(\omega ))}{j}
=\lim_{j\rightarrow \infty }\frac{1}{j}\sum_{i=1}^{j}N_{1}(\pi
^{N_{i-1}}\omega )=\E(N_{1}),
\end{equation}
therefore 
\begin{equation}
\lim_{j\rightarrow \infty }\frac{N_{j+1}(\omega )-N_{j}(\omega )}{j}
=0=\lim_{j\rightarrow \infty }\frac{N_{j+1}(\omega )-N_{j}(\omega )}{%
N_{j}(\omega )}\quad \text{ a.s. }\qedhere
\end{equation}
\end{proof}

\begin{lemma}
Given $\omega \in \Omega $ and integer $n$, choose $j$ such that $%
N_{j}(\omega )<n\leq N_{j+1}(\omega )$. For almost all $\omega $, the
limiting behaviours of 
\begin{equation}
\frac{1}{n}\log \lVert T_\omega(\gamma _{0},\gamma _{1},\dots ,\gamma
_{n})\rVert \quad\text{and}\quad\frac{1}{N_{j}}\log \lVert \mathbf{B}%
_{1}(\gamma )\mathbf{B}_{2}(\gamma )\dots \mathbf{B}_{j}(\gamma )\rVert 
\text{ for }\gamma =(\gamma _{j})\text{ }  \notag
\end{equation}
coincide.
\end{lemma}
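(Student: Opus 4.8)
The plan is to reduce everything to the block decomposition at neck levels, where the commuting hypothesis does the essential work. Given $\omega$ with $N_j<n\le N_{j+1}$, I would factor the transition matrix at the $j$-th neck,
\[
T_\omega(\gamma_0,\dots,\gamma_n)=\mathbf{B}_1(\gamma)\cdots\mathbf{B}_j(\gamma)\cdot T_\omega(\gamma_{N_j},\dots,\gamma_n).
\]
Since the sink characteristic vector has a one-element neighbour set (and $\gamma_0$, the level-$0$ vector $[0,1]$, likewise), each $\mathbf{B}_i(\gamma)$ is a $1\times1$ matrix, i.e.\ a positive scalar --- positivity because regularity forces $\mu_\omega$-measures of net intervals, hence transition-matrix norms, to be strictly positive. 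Therefore the leading factor detaches from the norm exactly:
\[
\log\lVert T_\omega(\gamma_0,\dots,\gamma_n)\rVert=\log\lVert\mathbf{B}_1(\gamma)\cdots\mathbf{B}_j(\gamma)\rVert+\log\lVert T_\omega(\gamma_{N_j},\dots,\gamma_n)\rVert.
\]
This is the one place where ``commuting'' is genuinely used: for a general matrix product the leading block could only be pulled out up to a factor of its norm, which here is exponentially small, so the scalar structure is indispensable. It now suffices to show that, after dividing by $n$, the tail term tends to $0$ and the leading term has the same limiting behaviour as $\tfrac{1}{N_j}\log\lVert\mathbf{B}_1(\gamma)\cdots\mathbf{B}_j(\gamma)\rVert$.

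For the tail, I would use finite type: there are only finitely many primitive transition matrices, all with nonnegative, uniformly bounded entries and at least one nonzero entry in every column. Submultiplicativity gives $\lVert T_\omega(\gamma_{N_j},\dots,\gamma_n)\rVert\le C^{\,n-N_j}$, while the inequality $\lVert AB\rVert\ge c\lVert B\rVert$ with $c>0$ depending only on the finitely many possible $A$'s (recorded just before Theorem \ref{regLocDim}) iterates to a lower bound of the form $\lVert T_\omega(\gamma_{N_j},\dots,\gamma_n)\rVert\ge c^{\,n-N_j}$. Hence $\lvert\log\lVert T_\omega(\gamma_{N_j},\dots,\gamma_n)\rVert\rvert\le C_0(n-N_j)$ for a constant $C_0$. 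By Lemma \ref{lem:vanishingratio}, using $N_j<n\le N_{j+1}$, almost surely
\[
0\le\frac{n-N_j}{n}\le\frac{N_{j+1}-N_j}{N_j}\longrightarrow0,\qquad\text{equivalently}\qquad\frac{N_j}{n}\longrightarrow1,
\]
so $\tfrac1n\log\lVert T_\omega(\gamma_{N_j},\dots,\gamma_n)\rVert\to0$ a.s.

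For the leading term, $\mathbf{B}_1(\gamma)\cdots\mathbf{B}_j(\gamma)=T_\omega(\gamma_0,\dots,\gamma_{N_j})$, so its norm equals $P_{\omega,N_j}(\Delta_{\omega,N_j}(x))$, which is a partial sum of $\sum_{\sigma\in\Lambda_{\omega,N_j}}p_{\omega,\sigma}=1$ containing at least one term, and that term is $\ge p_{\min}^{N_j}$. Thus $\tfrac{1}{N_j}\log\lVert\mathbf{B}_1(\gamma)\cdots\mathbf{B}_j(\gamma)\rVert\in[\log p_{\min},0]$ is bounded. Since $N_j/n\to1$,
\[
\frac1n\log\lVert\mathbf{B}_1(\gamma)\cdots\mathbf{B}_j(\gamma)\rVert-\frac1{N_j}\log\lVert\mathbf{B}_1(\gamma)\cdots\mathbf{B}_j(\gamma)\rVert=\Big(\frac{N_j}{n}-1\Big)\frac1{N_j}\log\lVert\mathbf{B}_1(\gamma)\cdots\mathbf{B}_j(\gamma)\rVert\longrightarrow0
\]
almost surely. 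Combining the two estimates gives
\[
\frac1n\log\lVert T_\omega(\gamma_0,\dots,\gamma_n)\rVert-\frac1{N_j}\log\lVert\mathbf{B}_1(\gamma)\cdots\mathbf{B}_j(\gamma)\rVert\longrightarrow0\quad\text{a.s.},
\]
so the two sequences share the same $\liminf$ and $\limsup$; in particular one converges exactly when the other does, to the same value. The main obstacle is controlling the ``overhang'' $n-N_j$ between an arbitrary level and the preceding neck, and this is supplied precisely by Lemma \ref{lem:vanishingratio}; the only other point needing care is verifying that the commuting and regularity hypotheses really do make the block factors positive scalars, so that they can be removed from the matrix norm without loss.
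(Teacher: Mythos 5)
Your proof is correct, and it uses the same basic ingredients as the paper: the block decomposition at neck levels, the scalar nature of the blocks coming from the one-element neighbour set, the finiteness of the set of primitive transition matrices, and Lemma \ref{lem:vanishingratio} to kill the overhang $n-N_j$. The execution differs slightly in a way worth noting. The paper argues via two one-sided estimates from submultiplicativity: comparing with the product up to $N_j$ for one inequality between $\liminf$'s (and $\limsup$'s), and then comparing with the product up to $N_{j+1}$, using $\lvert\log \mathbf{B}_{j+1}\rvert/N_{j+1}\to 0$, for the reverse inequality. You instead exploit the scalar blocks to detach the leading factor \emph{exactly}, $\log\lVert T_\omega(\gamma_0,\dots,\gamma_n)\rVert=\log(\mathbf{B}_1\cdots\mathbf{B}_j)+\log\lVert T_\omega(\gamma_{N_j},\dots,\gamma_n)\rVert$, and then show the difference of the two normalized quantities tends to zero almost surely; this needs a two-sided bound on the overhang norm, and your lower bound via iterating $\lVert AB\rVert\geq c\lVert B\rVert$ (with $c$ uniform over the finitely many primitive matrices) is legitimate and is the extra ingredient the paper's route avoids by working at level $N_{j+1}$ instead. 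Your version yields the marginally stronger conclusion that the two sequences differ by $o(1)$, not merely that their $\liminf$ and $\limsup$ agree. One small correction: positivity of the blocks is not a consequence of regularity; it follows because every primitive transition matrix has a nonzero entry in each column (equivalently, $P_{\omega,n}(\Delta)>0$ since every net interval is covered by at least one image $S_{\omega,\sigma}([0,1])$), so the scalar detachment is valid without invoking regularity.
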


\begin{proof}
Submultiplicity of the matrix norm gives 
\begin{align}
\frac{1}{n}\log \lVert T_\omega(\gamma _{0},\gamma _{1},\dots \gamma
_{n})\rVert & \leq \frac{1}{N_{j}}\log \lVert \mathbf{B}_{1}(\gamma )\mathbf{%
B}_{2}(\gamma )\dots \mathbf{B}_{j}(\gamma )T_\omega({\gamma _{N_{j}}}\dots {%
\gamma _{n}})\rVert  \notag \\
& \leq \frac{1}{N_{j}}\log \left( \lVert \mathbf{B}_{1}(\gamma )\mathbf{B}%
_{2}(\gamma )\dots \mathbf{B}_{j}(\gamma )\rVert \lVert T_\omega({\gamma
_{N_{j}}}\dots {\gamma _{n}})\rVert \right)  \notag \\
& \leq \frac{1}{N_{j}}\log \lVert \mathbf{B}_{1}(\gamma )\mathbf{B}%
_{2}(\gamma )\dots \mathbf{B}_{j}(\gamma )\rVert +\frac{N_{j+1}-N_{j}}{N_{j}}%
\delta  \notag
\end{align}
for some $\delta >0$ that does not depend on the path. By Lemma~\ref%
{lem:vanishingratio} 
\begin{equation*}
\liminf \frac{1}{n}\log \lVert T_\omega(\gamma _{0},\gamma _{1},\dots
\gamma_{n})\rVert \leq \liminf \frac{1}{N_{j}}\log \lVert \mathbf{B}%
_{1}(\gamma ) \mathbf{B}_{2}(\gamma )\dots \mathbf{B}_{j}(\gamma )\rVert
\end{equation*}
(and similarly for the $\limsup $).

For the other direction, we note that as each $\mathbf{B}_{i}$ is a positive
scalar, 
\begin{equation*}
\frac{1}{N_{j+1}}\log \lVert \mathbf{B}_{1}(\gamma )\mathbf{B}_{2}(\gamma
)\dots \mathbf{B}_{j+1}(\gamma )\rVert =\frac{\sum_{i=1}^{j+1}\log \mathbf{B}%
_{i}(\gamma )}{N_{j+1}}
\end{equation*}
But $\mathbf{B}_{j+1}(\gamma )\leq \left( \max \left\Vert T_\omega(\xi ,\chi
)\right\Vert \right) ^{N_{j+1}-N_{j}}$ where the maximum is taken over all
parent/child characteristic vectors $(\zeta ,\chi )$. As there only finitely
many characteristic vectors this maximum is bounded, say by $C$. Hence 
\begin{equation*}
\left\vert \frac{\log \mathbf{B}_{j+1}}{N_{j+1}}\right\vert \leq C \frac{%
N_{j+1}-N_{j}}{N_{j}}\rightarrow 0.
\end{equation*}
As $N_{j+1}/N_{j}\rightarrow 1$, the limiting behaviours of 
\begin{equation*}
\frac{1}{N_{j}}\log \lVert \mathbf{B}_{1}(\gamma )\mathbf{B}_{2}(\gamma
)\dots \mathbf{B}_{j}(\gamma )\rVert \quad\text{and}\quad\frac{1}{N_{j+1}}%
\log \lVert \mathbf{B}_{1}(\gamma )\mathbf{B}_{2}(\gamma )\dots \mathbf{B}%
_{j+1}(\gamma )\rVert
\end{equation*}
coincide. By similar reasoning to the first part of the argument 
\begin{equation*}
\liminf \frac{1}{N_{j+1}}\log \lVert \mathbf{B}_{1}(\gamma ) \mathbf{B}%
_{2}(\gamma )\dots \mathbf{B}_{j+1}(\gamma )\rVert \leq \liminf \frac{1}{n}
\log \lVert T_\omega(\gamma _{0},\gamma _{1},\dots \gamma _{n})\rVert ,
\end{equation*}
(and similarly for the $\limsup $) and this completes the proof.
\end{proof}

For each $\omega \in \Omega $ and $i\in \mathbb{N}$ there are only finitely
many admissible paths $\gamma _{N_{i}},\dots,\gamma _{N_{i+1}}$ and hence
only finitely many choices for the matrices ${\mathbf{B}}_{i}(\gamma)$.
Consequently, for each $\omega $ and $i$ there is a choice that maximizes
(or minimizes) the logarithm of the norm (independent of the proceeding and
following block). We denote these maximal and minimal choices by $\overline{%
\mathbf{B}}_{i}(\omega)$ and $\underline{\mathbf{B}}_{i}(\omega)$,
respectively. Recalling that the block matrices (including $\underline{%
\mathbf{B}}_{i}$, $\overline{\mathbf{B}}_{i})$ are actually scalars, we see
that for almost all $\omega $ and all $\omega $-paths $\gamma $ we have 
\begin{eqnarray}
\lim_{j}\frac{1}{N_{j}}\log \lVert \underline{\mathbf{B}}_{1}\dots 
\underline{\mathbf{B}}_{j}\rVert &\leq &\underline{\lim}_{j}\frac{1}{N_{j}}%
\log \lVert {\mathbf{B}}_{1}(\gamma )\dots {\mathbf{B}}_{j}(\gamma )\rVert 
\text{ }  \label{minmax} \\
&\leq &\overline{\lim }_{j}\frac{1}{N_{j}}\log \lVert {\mathbf{B}}%
_{1}(\gamma )\dots {\mathbf{B}}_{j}(\gamma )\rVert  \notag \\
&\leq &\lim_{j}\frac{1}{N_{j}}\log \lVert \overline{\mathbf{B}}_{1}\dots 
\overline{\mathbf{B}}_{j}\rVert .  \notag
\end{eqnarray}
The `almost all' is required for the blocks to exist, but we could equally
have expressed this in terms of the original matrices. However, as we are
interested in generic behaviour, we will not make a distinction.

We can apply Kingman's subadditive ergodic theorem, (Proposition~\ref%
{prop:Kingman}), to our setting by considering the \emph{neck shift} $%
\pi^{N_{1}}$, that is 
\begin{equation*}
\pi ^{N_{1}}(\omega )=(\omega _{N_{1}(\omega )+1},\omega _{N_{1}+2},\dots ),
\end{equation*}
putting $\pi ^{N_{1}}(\omega )$ equal to the empty word for the measure zero
set of $\omega $'s where $N_{j}(\omega )=\infty $ for some $j$. The
independence of the necks guarantees the invariance of $\pi ^{N_{1}}$ with
respect to our Bernoulli probability measure $\Prob$. It is also easy to
check that $\pi ^{N_{1}}$ inherits ergodicity from the usual (one-letter)
shift on $\Omega $. Finally, letting 
\begin{equation*}
g_{n}(\omega )=\log \lVert \underline{\mathbf{B}}_{1}\dots \underline{ 
\mathbf{B}}_{n}\rVert =\log \underline{\mathbf{B}}_{1}\dots \underline{ 
\mathbf{B}}_{n}
\end{equation*}
we get the required subadditivity due to the submultiplicativity of the
matrix norm.

We can use Kingman's subadditive ergodic theorem to get explicit bounds on
the Lyapunov spectrum and hence the local dimensions.

\begin{proposition}
Almost surely, 
\begin{equation}
\lim_{j\rightarrow \infty }\frac{1}{N_{j}}\log (\underline{\mathbf{B}}%
_{1}\dots \underline{\mathbf{B}}_{j}) =\frac{\E\left( \log \underline{%
\mathbf{B}}_{1}\right) }{\E(N_{1})}  \notag
\end{equation}
and 
\begin{equation}
\lim_{j\rightarrow \infty }\frac{1}{N_{j}}\log (\overline{\mathbf{B}}
_{1}\dots \overline{\mathbf{B}}_{j})=\frac{\E\left( \log \overline{\mathbf{B}
}_{1}\right) }{\E(N_{1})}.  \notag
\end{equation}
\end{proposition}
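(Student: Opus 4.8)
The plan is to express $\log\bigl(\underline{\mathbf{B}}_1\cdots\underline{\mathbf{B}}_j\bigr)$ as an ergodic average over the neck shift $\pi^{N_1}$ and then to divide by $N_j/j$. Since each $\underline{\mathbf{B}}_i$ is a positive scalar, one first writes
\begin{equation*}
g_j(\omega):=\log\bigl(\underline{\mathbf{B}}_1\cdots\underline{\mathbf{B}}_j\bigr)=\sum_{i=1}^{j}\log\underline{\mathbf{B}}_i(\omega).
\end{equation*}
Because $\nu$ is a sink, the characteristic vectors at consecutive neck levels are both the sink characteristic vector $\beta$, and the set of admissible $\omega$-paths from $\beta$ to $\beta$ through a prescribed finite word, together with the (scalar) transition matrices of those paths, depends only on that word and on $\beta$. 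Hence $\underline{\mathbf{B}}_i$ is a function of the block $(\omega_{N_{i-1}+1},\dots,\omega_{N_i})$ alone; equivalently $\underline{\mathbf{B}}_i(\omega)=\underline{\mathbf{B}}_1\bigl((\pi^{N_1})^{i-1}\omega\bigr)$, so $g_j$ is in fact additive with respect to $\pi^{N_1}$: $g_{n+j}(\omega)=g_n(\omega)+g_j\bigl((\pi^{N_1})^{n}\omega\bigr)$.

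Next I would record integrability: since there are only finitely many characteristic vectors there are only finitely many primitive transition matrices, so the positive scalar $\underline{\mathbf{B}}_1$, being the transition matrix of an admissible path of length $N_1$, satisfies $\lvert\log\underline{\mathbf{B}}_1\rvert\le C'N_1$ for a fixed constant $C'$; as $\E(N_1)<\infty$ (estimated just above the statement) this gives $\E\bigl(\lvert\log\underline{\mathbf{B}}_1\rvert\bigr)<\infty$. Recall that $\pi^{N_1}$ is measure preserving and ergodic for $\Prob$. Applying Kingman's subadditive ergodic theorem, Proposition~\ref{prop:Kingman}, to $g_j$ and $\pi^{N_1}$ — with the limit finite and equal to $\E(g_1)$ because of the additivity and integrability just noted, or equivalently applying Birkhoff's ergodic theorem to the additive cocycle $g_j$ — then yields, almost surely,
\begin{equation*}
\frac{1}{j}\,g_j(\omega)=\frac{1}{j}\sum_{i=1}^{j}\log\underline{\mathbf{B}}_i(\omega)\longrightarrow\E\bigl(\log\underline{\mathbf{B}}_1\bigr).
\end{equation*}

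Finally, the proof of Lemma~\ref{lem:vanishingratio} already shows that $N_j(\omega)/j\to\E(N_1)$ almost surely, and $\E(N_1)\ge k\ge1>0$, where $k=\lvert\nu\rvert$. Dividing the two limits,
\begin{equation*}
\frac{1}{N_j}\log\bigl(\underline{\mathbf{B}}_1\cdots\underline{\mathbf{B}}_j\bigr)=\frac{g_j(\omega)/j}{N_j(\omega)/j}\longrightarrow\frac{\E\bigl(\log\underline{\mathbf{B}}_1\bigr)}{\E(N_1)}\qquad\text{a.s.},
\end{equation*}
which is the first assertion; the second follows by the identical argument with $\overline{\mathbf{B}}_i$ in place of $\underline{\mathbf{B}}_i$. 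The step needing the most care is the first one — observing that the sink property turns $(\log\underline{\mathbf{B}}_i)_i$ into a stationary (indeed i.i.d.), integrable sequence adapted to the neck renewal structure, so that the neck shift is the correct dynamical system on which to run the ergodic theorem; once that is in place, the proposition is just the quotient of two convergent ergodic averages.
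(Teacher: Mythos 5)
Your argument is correct and follows essentially the same route as the paper: Birkhoff's ergodic theorem for the neck shift gives $N_j/j\to\E(N_1)$, the per-neck scalars $\log\underline{\mathbf{B}}_i$ form an integrable additive cocycle over the neck shift so that $\frac{1}{j}\sum_{i\le j}\log\underline{\mathbf{B}}_i\to\E(\log\underline{\mathbf{B}}_1)$ (the paper invokes Kingman for the existence of the limit and then identifies it by the same quotient $\frac{j}{N_j}\cdot\frac{g_j}{j}$, where your observation $\lvert\log\underline{\mathbf{B}}_1\rvert\le C'N_1$ plays the role of the paper's bound $\lVert B\rVert\ge\min p_{j,k}^{N}$), and dividing the two limits gives the claim. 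The only cosmetic difference is that you make the stationarity of the block sequence and the integrability explicit, which the paper leaves implicit.
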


\begin{proof}
Since $1\leq \E(N_{1})<\infty $ a.s., Birkhoff's ergodic theorem applied to
the neck shift gives 
\begin{equation}
\lim_{j}\frac{N_{j}}{j}=\lim_{j}\frac{1}{j}\sum_{i=1}^{j}(N_{i}-N_{i-1})=
\lim_{j}\frac{1}{j}\sum_{i=1}^{j}N_{1}(\pi ^{N_{i-1}}(\omega ))=\E(N_{1}). 
\notag
\end{equation}
Recall that if $B$ is a product of $N$ transition matrices, then $\left\Vert
B\right\Vert \geq \min p_{j,k}^{N}$. Thus Kingman's subadditive ergodic
theorem ensures there is some $\alpha \in \mathbb{R}$ such that 
\begin{equation*}
\lim_{j}\frac{1}{N_{j}}\log \underline{\mathbf{B}}_{1}\dots \underline{ 
\mathbf{B}}_{j}=\alpha\quad \text{ a.s.,}
\end{equation*}
so 
\begin{equation*}
\alpha =\lim_{j}\frac{\log \underline{\mathbf{B}}_{1}\dots \underline{ 
\mathbf{B}}_{j}}{N_{j}}=\left( \lim_{j}\frac{N_{j}}{j}\right) ^{-1}\lim_{j} 
\frac{\log \underline{\mathbf{B}}_{1}\dots \underline{\mathbf{B}}_{j}}{j}= 
\frac{\E\left( \log \underline{\mathbf{B}}_{1}\right) }{\E(N_{1})}\quad 
\text{ a.s. }
\end{equation*}
The almost sure finiteness of $\E(N_{1})$ also implies $\lvert \E(\log 
\underline{\mathbf{B}}_{1})\rvert <\infty $.

The second claim can be proved analogously.
\end{proof}

\begin{theorem}
\label{CommutingThm}Let $\fS$ be a RIFS of finite type and assume that the
associated random self-similar measures, $\mu _{\omega }$, are regular. If
the RIFS is commuting, then for almost all $\omega $, the set of attainable
local dimensions for the measure $\mu _{\omega }$ is the closed interval, 
\begin{equation*}
\left[ \frac{\E\left( \log \overline{\mathbf{B}}_1 \right)}{\E(N_1)\log r}%
,\, \frac{\E\left( \log \underline{\mathbf{B}}_1 \right)}{\E(N_1)\log r} %
\right].
\end{equation*}
\end{theorem}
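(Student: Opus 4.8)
The plan is to combine the reduction already carried out in this section with two constructions: a ``pure'' path realizing each endpoint of the interval, and an interpolation argument realizing every value strictly between.

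First, fix a full-measure set $\Omega_0$ on which: there are infinitely many neck levels; $N_j/j\to\E(N_1)$; the partial sums $\frac1j\sum_{i=1}^{j}\log\underline{\mathbf B}_i\to\E(\log\underline{\mathbf B}_1)$ and $\frac1j\sum_{i=1}^{j}\log\overline{\mathbf B}_i\to\E(\log\overline{\mathbf B}_1)$ (Birkhoff for the ergodic neck shift, exactly as in the proposition preceding the theorem); and inequality~\eqref{minmax} holds for \emph{every} $\omega$-path. On $\Omega_0$, for any $x\in K_\omega$ with $[x]_\omega=(\gamma_0,\gamma_1,\dots)$, Theorem~\ref{regLocDim} and the lemma comparing the growth of $T_\omega(\gamma_0,\dots,\gamma_n)$ with the block products give, whenever the relevant limit exists, $\dimlc\mu_\omega(x)=\lim_j\frac{1}{N_j\log r}\sum_{i=1}^{j}\log\mathbf B_i(\gamma)$ (the sum replacing the norm of the product because the commuting hypothesis makes each $\mathbf B_i(\gamma)$ a positive scalar), and likewise for $\dimulc$ and $\dimllc$. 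Combining the preceding proposition with~\eqref{minmax} and dividing by $\log r<0$ then shows immediately that every attainable value of $\dimlc\mu_\omega$ (indeed of $\dimulc\mu_\omega$ and $\dimllc\mu_\omega$) lies in $\bigl[\,\E(\log\overline{\mathbf B}_1)/(\E(N_1)\log r),\ \E(\log\underline{\mathbf B}_1)/(\E(N_1)\log r)\,\bigr]$; here $\log\overline{\mathbf B}_i\ge\log\underline{\mathbf B}_i$, so the interval is non-degenerate. This gives one inclusion.

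For the reverse inclusion at the endpoints, fix $\omega\in\Omega_0$ and at the $i$-th block choose an admissible path from the sink characteristic vector to itself realizing $\overline{\mathbf B}_i(\omega)$; concatenate these segments (consecutive segments can be joined because one may extend an admissible path for the next block from whatever sink net interval the current block ends at). This produces a genuine nested sequence of $\omega$-net intervals of diameter comparable to $r^n$, hence shrinking to a point $x\in K_\omega$ with symbolic representation $\gamma$; for this $\gamma$, $\frac1{N_j}\sum_{i=1}^{j}\log\mathbf B_i(\gamma)=\frac1{N_j}\log(\overline{\mathbf B}_1\cdots\overline{\mathbf B}_j)\to\E(\log\overline{\mathbf B}_1)/\E(N_1)$ by the preceding proposition, so $\dimlc\mu_\omega(x)$ equals the left endpoint. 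Using $\underline{\mathbf B}_i$ in place of $\overline{\mathbf B}_i$ gives the right endpoint. For a value $\alpha$ strictly between the two endpoints, put $c=\alpha\log r$, so $\E(\log\underline{\mathbf B}_1)<c\,\E(N_1)<\E(\log\overline{\mathbf B}_1)$; since the two extreme partial-sum averages converge on $\Omega_0$, in particular $\tfrac1j\log\underline{\mathbf B}_j\to0$ and $\tfrac1j\log\overline{\mathbf B}_j\to0$. A greedy construction — at step $j$ pick the maximizing block if $\frac1{j-1}\sum_{i<j}\log\mathbf B_i<c\,\E(N_1)$ and the minimizing block otherwise — yields an $\omega$-path $\gamma$ with $\frac1j\sum_{i=1}^{j}\log\mathbf B_i(\gamma)\to c\,\E(N_1)$ (an elementary real-variable fact: the two extreme averages straddle the target and each term is $o(j)$, which forces runs of a single type to terminate and keeps the running average within $o(1)$ of $c\,\E(N_1)$). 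Combining with $N_j/j\to\E(N_1)$ gives $\frac1{N_j}\sum_{i=1}^{j}\log\mathbf B_i(\gamma)\to c$, so the point $x\in K_\omega$ with representation $\gamma$ (obtained by concatenation as before) has $\dimlc\mu_\omega(x)=c/\log r=\alpha$. Together with the endpoints and the first inclusion, this identifies the set of attainable local dimensions with the stated closed interval.

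The main obstacle is the interpolation step: one must verify that the greedy selection genuinely converges to the target average even though the block weights $\log\mathbf B_i$ are \emph{unbounded} (the inter-neck gaps $N_i-N_{i-1}$ being unbounded), which is exactly why the control $\tfrac1j\log\overline{\mathbf B}_j,\tfrac1j\log\underline{\mathbf B}_j\to0$ — itself a consequence of Cesàro convergence — is essential rather than a mere convenience. A secondary, bookkeeping, point is confirming that an arbitrary choice of admissible block segments concatenates to a bona fide symbolic representation of a point of $K_\omega$; this uses that a decreasing sequence of closed net intervals, each meeting $K_\omega$ and of diameter tending to $0$, has singleton intersection lying in $K_\omega$.
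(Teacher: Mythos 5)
Your proposal is correct, and its two bounding ingredients coincide with the paper's: the containment of all attainable (upper/lower) local dimensions in the stated interval via \eqref{minmax}, Theorem~\ref{regLocDim}, the neck-level reduction lemma and Kingman's theorem, and the realization of the two endpoints by the pure maximal and pure minimal block paths together with the proposition computing $\lim_j N_j^{-1}\log(\overline{\mathbf B}_1\cdots\overline{\mathbf B}_j)$ and $\lim_j N_j^{-1}\log(\underline{\mathbf B}_1\cdots\underline{\mathbf B}_j)$. Where you genuinely diverge is the realization of interior values. The paper randomizes: it chooses $\underline{\mathbf B}_j$ with probability $t$ and $\overline{\mathbf B}_j$ with probability $1-t$, applies Kingman (Proposition~\ref{prop:Kingman}) to the skew product of $(\Omega,\Prob)$ with the $(t,1-t)$-Bernoulli system under the neck shift, obtains the exponent $\bigl(t\E(\log\underline{\mathbf B}_1)+(1-t)\E(\log\overline{\mathbf B}_1)\bigr)/\E(N_1)$ at once, and varies $t\in[0,1]$; you instead run, for each target $\alpha$, a deterministic greedy selection of maximal/minimal blocks on a single full-measure set $\Omega_0$. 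The paper's route needs no bare-hands analysis but produces one exceptional null set per value of $t$, whereas your construction attains every value of the interval simultaneously on the same $\Omega_0$, which matches the quantifier order of the statement more directly; this is a genuine (and welcome) difference. The one place you are terser than you should be is the ``elementary real-variable fact'': since the blocks are unbounded, forcing runs to terminate and controlling the excursion of the running average during a long maximal (or minimal) run requires not just $\frac1j\log\overline{\mathbf B}_j\to0$ and $\frac1j\log\underline{\mathbf B}_j\to0$ but the full Ces\`aro convergence of both extreme averages (compare the partial sums of $\log\overline{\mathbf B}_i$ over the run with $\E(\log\overline{\mathbf B}_1)$ times the run length to see the average is pushed back up to the target without first drifting far below, and use the single-step $o(1)$ bound only for the overshoot at crossings); since you placed exactly these convergences into $\Omega_0$, the gap is one of exposition, not substance. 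Your bookkeeping points (blocks concatenate freely because each begins and ends at the sink characteristic vector, and a nested sequence of net intervals with diameters tending to zero meets $K_\omega$ in a single point) are correct and are used implicitly in the paper as well.
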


\begin{proof}
To prove this, we construct blocks that interpolate between the maximal and
minimal achievable local dimensions. To be precise, given an $\omega $ we
chose the $j$'th block to be $\underline{\mathbf{B}}_{j}$ with probability $%
t $ and to be $\overline{\mathbf{B}}_{j}$ with probability $1-t$. We denote
this random variable by $X_{j}$ so that $\Prob_{1}\{X_{j}=\underline{\mathbf{%
\ B}}_{j}\}=t$ and $\Prob_{1}\{X_{j}=\overline{\mathbf{B}}_{j}\}=1-t$. We
consider the product measure of the original measure $\Prob$ on $\Omega $
with $\Prob_{1}$, the $(t,1-t)$-Bernoulli measure on $\{0,1\}^{\BbN}$. Since
the latter is strongly mixing, the product measure is ergodic with respect
to the shift map on $\Omega \times \{0,1\}^{\BbN}$ given by $F(\omega
,\lambda )=(\pi ^{N_{1}}(\omega ),\lambda _{2},\lambda _{3}\dots )$. Again
we can use Kingman's subadditive ergodic theorem to obtain 
\begin{equation}
\lim_{j}\frac{1}{N_{j}}\log X_{1}\dots X_{j}=\frac{\E\left( \log
X_{1}\right) }{ \E\left( N_{1}\right) }  \notag
\end{equation}
(where the expectation is with respect to the product measure). Further, 
\begin{equation}
\E\left( \log X_{1}\right) =t\E\left( \log \underline{\mathbf{B}}_{1}\right)
+(1-t)\E\left( \log \overline{\mathbf{B}}_{1}\right) .  \notag
\end{equation}
Varying $t\in \lbrack 0,1]$, it follows that for a.a.\ $\omega $ there
exists an admissible path $\gamma =(\gamma _{0},\gamma _{1},\dots )$ and $%
x\in K_{\omega }$ with $[x]_{\omega }=\gamma $, which has Lyapunov exponent
given by 
\begin{equation*}
\frac{t\E\left( \log \underline{\mathbf{B}}_{1}\right) +(1-t)\E\left( \log 
\overline{\mathbf{B}}_{1}\right) }{\E\left( N_{1}\right) }.
\end{equation*}
Hence the set of attainable local dimensions contains the interval 
\begin{equation*}
\left[ \frac{\E\left( \log \overline{\mathbf{B}}_{1}\right) }{\E(N_{1})\log
r }\;,\;\frac{\E\left( \log \underline{\mathbf{B}}_{1}\right) }{\E%
(N_{1})\log r }\right] .
\end{equation*}

The fact that the set of local dimensions is contained within this interval
for a.a.\ $\omega $ is clear from the minimality/maximality choices of $%
\underline{\mathbf{B}}_{1},\overline{\mathbf{B}}_{1}$ (see equation %
\eqref{minmax}) and Kingman's subadditive ergodic theorem.
\end{proof}

\subsection{A family of commuting RIFS}

In this subsection we will construct a family of examples of commuting RIFS.
We speculate that these are the only examples under the additional
assumption that the self-similar sets are always $[0,1]$ and provide some
evidence to support this speculation.

\begin{proposition}
\label{ExCommuting}Let $d\geq 2$ be an integer. Assume that $\fS_{1}$
consists of the $d$ contractions $S_{1,i}(x)=x/d+i/d$ for $i=0,\dots ,d-1$
and that the other IFS, $\fS_{j}$, $j=2,\dots,m$, consist of contractions of
the form $S(x)=x/d+a/d^{k}$ for choices of $a,k\in \mathbb{N}$. Finally,
assume each $\fS_{j}$ has self-similar set equal to $[0,1]$ (hence all $%
K_{\omega }=[0,1]$). Then the RIFS $\fS=\{\fS_1, \fS_2, \dots, \fS_m\}$ is
commuting.
\end{proposition}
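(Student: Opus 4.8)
The plan is to exhibit an explicit sink for $\fS$ and to check that its sink characteristic vector has a one-element neighbour set. The candidate sink is a sufficiently long run of the letter $1$, and the sink characteristic vector will be the ``clean'' $d$-adic one, of normalized length $1$ and neighbour set $(0)$.

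First I would record the finite type property, so that the notions of characteristic vector and sink apply. Each offset has the form $d_{j,k}=a_{j,k}d^{-e_{j,k}}$ with $a_{j,k},e_{j,k}\in\mathbb{N}$, and for $\fS_1$ the exponent is $1$; put $E=\max_{j,k}e_{j,k}$. Since $S_{\omega,\sigma}(0)=\sum_{i=1}^{n}d_{\omega_i,\sigma_i}d^{-(i-1)}$, every such value lies in $d^{-(n-1+E)}\mathbb{Z}$, so $r^{-n}\bigl(S_{\omega,\sigma}(0)-S_{\omega,\tau}(0)\bigr)\in d^{-(E-1)}\mathbb{Z}$, and only finitely many such numbers lie in $[-1,1]$; this is the finite type condition (it also follows from Example~\ref{ExFT} or Proposition~\ref{prop:4.6} applied with the Pisot number $\rho=d$).

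The key step is the structural claim: \emph{if $\omega_{n-E+2}=\cdots=\omega_n=1$ (a block of $E-1$ consecutive $1$'s ending at position $n$), then the $\omega$-net intervals of level $n$ are exactly the $d$-adic cells $[cd^{-n},(c+1)d^{-n}]$, $0\le c<d^{n}$, each with reduced characteristic vector $(1,(0))$.} I would argue in three moves. (i) The denominator computation above, together with the fact that the last $E-1$ letters are $1$ (so the corresponding terms of $S_{\omega,\sigma}(0)$ carry denominator dividing $d^{n}$), shows every endpoint $S_{\omega,\sigma}(z)$, $z\in\{0,1\}$, lies in $d^{-n}\mathbb{Z}$; hence each image interval $S_{\omega,\sigma}([0,1])$ is a $d$-adic cell of length $d^{-n}$. (ii) Since $\mathrm{hull}(K_\omega)=[0,1]$, the sets $\fS_{\omega_1}\circ\cdots\circ\fS_{\omega_n}([0,1])$ form a decreasing sequence of compact subsets of $[0,1]$ with intersection $K_\omega=[0,1]$, hence each equals $[0,1]$; as this set is the union $\bigcup_{\sigma\in\Lambda_{\omega,n}}S_{\omega,\sigma}([0,1])$ of $d$-adic cells, all $d^{n}$ unit cells must occur, so the level-$n$ breakpoints are precisely $\{cd^{-n}:0\le c\le d^{n}\}$ and the net intervals are precisely those cells. (iii) Such a cell $\Delta$ has normalized length $\ell_n(\Delta)=r^{-n}d^{-n}=1$, and the codings relevant to it (those $\sigma$ with $\Delta\subseteq S_{\omega,\sigma}([0,1])$, which by the length bound forces $S_{\omega,\sigma}([0,1])=\Delta$) are exactly those with $S_{\omega,\sigma}(0)=cd^{-n}$, each contributing the neighbour coordinate $r^{-n}(cd^{-n}-cd^{-n})=0$; hence $V^{\omega}_n(\Delta)=(0)$.

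To conclude, put $\ell=\max\{E-1,1\}$, let $\nu=(1,\dots,1)\in\mathcal{A}^{\ell}$, and let $\beta$ be the reduced characteristic vector of normalized length $1$ with neighbour set $(0)$. Given any characteristic vector $\gamma$, realized by an $\omega'$-net interval $\Delta_0$ of some level $N$, I would take $\omega=(\omega'_1,\dots,\omega'_N,1,1,\dots)\in\Omega$ and choose any descending chain of children $\Delta_0\supset\Delta_1\supset\cdots\supset\Delta_\ell$ with $\Delta_j\in\F_{\omega,N+j}$; this is an admissible $\nu$-path out of $\gamma$, and by the structural claim (applied at level $N+\ell$, whose last $E-1$ letters are $1$) its terminal vector has reduced part $\beta$. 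Because the claim says \emph{every} $\omega$-net interval of level $N+\ell$ has reduced characteristic vector $\beta$, no admissible $\nu$-path out of $\gamma$ can reach any other reduced characteristic vector; so $\nu$ is a sink and $\beta$ is the sink characteristic vector. Its neighbour set being the singleton $(0)$, $\fS$ is commuting. The step I expect to need the most care is (ii) --- establishing that the level-$n$ net intervals are \emph{exactly} the $d$-adic cells rather than some coarser partition --- which rests on the hull hypothesis and the elementary fact that a decreasing sequence of compact subsets of $[0,1]$ with intersection $[0,1]$ is constantly $[0,1]$; the remaining work is routine bookkeeping with powers of $d$.
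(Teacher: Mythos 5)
Your proposal is correct and follows essentially the same route as the paper's proof: both take the sink to be a long enough run of the letter $1$ (the paper uses the maximal exponent $N$, you use $E-1$ with the $\max\{E-1,1\}$ guard, both of which work) and show that after such a run every net interval is exactly a $d$-adic cell of that level, hence has normalized length $1$ and singleton neighbour set $(0)$, i.e., the reduced characteristic vector of $[0,1]$, so this vector is a sink vector with one neighbour and the RIFS is commuting. Your steps (ii)--(iii), using the hypothesis $K_\omega=[0,1]$ to see that all $d$-adic cells occur as net intervals and that every covering image coincides with the cell, simply spell out details the paper treats tersely.
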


\begin{proof}
Let $N$ be the maximum exponent $k$ taken over all the contractions in all
the $\fS_{j}$. Note that all images of $S_{\omega ,\sigma }([0,1])$, where $%
\sigma \in \Lambda _{\omega ,n}$, will be of the form $%
[c/d^{n+N},(c+d^{N})/d^{n+N}]$ for suitable positive integers $c,N$. Hence
all level $n$ net intervals will be of the form $[a/d^{n+N},b/d^{n+N}]$ for $%
a,b\in \mathbb{N}$.

Let $\nu =(1,1,\dots,1)\in \mathcal{A}^{N}$. Acting on these intervals by
any contraction $S_{\nu ,\sigma }$ (a composition of $N$ contractions from
the first IFS) gives intervals again of the form $[a^{\prime
}/d^{n+N},b^{\prime }/d^{n+N}]$ for suitable $a^{\prime }$, $b^{\prime }\in 
\mathbb{N}$. As the level $n$ net intervals have length at most $1/d^{n}$,
these have length at most $1/d^{N+n}$ and hence we must have $b^{\prime
}=a^{\prime }+1$.

Thus these are all net intervals and they all have the same reduced
characteristic vector as $[0,1]$. This proves $\nu $ is a sink and the RIFS
is commuting.
\end{proof}

\begin{proposition}
\label{commchar}Assume the RIFS is commuting with sink $\nu
=(1,1,\dots,1)\in \mathcal{A}^{k}$ and that the self-similar set associated
with $\fS_{1}$ is $[0,1]$. Then $\fS_{1}$ consists of the $d$ contractions $%
S_{1,j}(x)=x/d+j/d$ for $j=0,\dots ,d-1$, where $d$ is an integer, $d\geq 2$.
\end{proposition}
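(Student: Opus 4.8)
The plan is to apply the hypothesis to the single sequence $\omega=(1,1,1,\dots)$, for which the $\omega$-net intervals of level $n$ coincide with the net intervals of the deterministic IFS $\fS_{1}$, and to extract enough rigidity to identify all the contractions of $\fS_{1}$. Write $r$ for the common contraction ratio and $S_{1,i}(x)=rx+d_{1,i}$, with the $d_{1,i}$ distinct and lying in $[0,1-r]$; by the convex-hull hypothesis there is an index $o\in\I_{1}$ with $d_{1,o}=0$, so $S_{1,o}(x)=rx$, and $\bigcup_{i\in\I_{1}}[d_{1,i},d_{1,i}+r]=[0,1]$ because $[0,1]$ is the attractor of $\fS_{1}$.

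First I would unpack the sink. Let $\gamma_{0}$ be the characteristic vector of $[0,1]$. Every level-$k$ net interval of $\fS_{1}$ is, via its chain of parents back to $[0,1]$, a $\nu$-descendant of $\gamma_{0}$ along the path forced by $\omega_{1}=\dots=\omega_{k}=1$; since $\nu$ is a sink, all of these net intervals must share the sink reduced characteristic vector $\beta$, and the commuting hypothesis says $\beta$ has exactly one neighbour. Equivalently: every level-$k$ net interval of $\fS_{1}$ is contained in exactly one level-$k$ cylinder $S_{\omega,\sigma}([0,1])$, $\sigma\in\I_{1}^{k}$.

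Next I would turn ``one neighbour everywhere'' into a tiling statement. If two distinct level-$k$ cylinders $C\neq C'$ had overlapping interiors, I would pick a point $x$ in the overlap that is not an endpoint of any level-$k$ cylinder; since the points subdividing $[0,1]$ into level-$k$ net intervals are exactly the endpoints of the level-$k$ cylinders and $K_{\omega}=[0,1]$, the level-$k$ net interval containing $x$ must lie inside both $C$ and $C'$ and hence have at least two neighbours, a contradiction. Thus the distinct level-$k$ cylinders have pairwise disjoint interiors; as they also cover $[0,1]$ and each has length $r^{k}$, they tile $[0,1]$, so $r^{-k}\in\BbN$ and the distinct cylinders are precisely the intervals $[jr^{k},(j+1)r^{k}]$, $0\le j\le r^{-k}-1$. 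Now for each $i\in\I_{1}$ the coding $(o,\dots,o,i)\in\I_{1}^{k}$ (with $k-1$ copies of $o$) satisfies $S_{1,o}\circ\dots\circ S_{1,o}\circ S_{1,i}(x)=r^{k}x+r^{k-1}d_{1,i}$, so this level-$k$ cylinder is $[r^{k-1}d_{1,i},\,r^{k-1}d_{1,i}+r^{k}]$; being one of the tiles forces $r^{k-1}d_{1,i}=j_{i}r^{k}$ for an integer $j_{i}\ge0$, i.e.\ $d_{1,i}=j_{i}r$. Finally, from $\bigcup_{i}[j_{i}r,(j_{i}+1)r]=[0,1]$ with each summand contained in $[0,1]$ and the $j_{i}$ distinct, a short one-dimensional covering argument gives $\{j_{i}:i\in\I_{1}\}=\{0,1,\dots,|\I_{1}|-1\}$ and $|\I_{1}|\,r=1$. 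Hence $d:=|\I_{1}|=r^{-1}$ is an integer with $d\ge2$ (as $r<1$), and $d_{1,i}=j_{i}/d$ runs over $\{0,1/d,\dots,(d-1)/d\}$, which is exactly the assertion $\fS_{1}=\{x\mapsto x/d+j/d:\ j=0,\dots,d-1\}$.

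The routine points --- existence of $o$, the diameter bookkeeping, and the final covering argument (no gaps, outermost tile inside $[0,1]$) --- I would handle briskly. The step I expect to be the main obstacle is the passage from the combinatorial ``single neighbour'' to the geometric tiling of $[0,1]$ by level-$k$ cylinders: specifically, the assertion that a level-$k$ cylinder meeting the interior of a level-$k$ net interval must contain that net interval. This rests on the precise construction of net intervals (whose division points are just the cylinder endpoints) together with $K_{\omega}=[0,1]$, and it is the place where I would be most careful.
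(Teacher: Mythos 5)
Your proposal is correct, and it reaches the conclusion by a somewhat different decomposition than the paper's proof. The paper argues locally at the sink: it first shows, via a cyclic-shift argument on $\nu$-paths, that the action of $\fS_{1}$ can only send the sink characteristic vector $\beta$ to copies of itself; counting the $d$ equal-length children of a $\beta$-net interval then gives $r=1/d$, and the single-neighbour (commuting) hypothesis forces the images of the unique covering cylinder to be non-overlapping and exactly adjacent, which identifies the maps. You bypass the self-mapping lemma entirely: applying the sink definition with $N=0$ along the pure word $(1,1,1,\dots)$ already shows that every level-$k$ net interval of $\fS_{1}$ has reduced characteristic vector $\beta$, and the one-neighbour property combined with $K_{1}=[0,1]$ then yields a global statement, namely that the level-$k$ cylinders tile $[0,1]$ and hence are grid cells of length $r^{k}$; the special codings $(o,\dots,o,i)$ make each translation $d_{1,i}$ an integer multiple of $r$, and your level-one covering argument forces $1/r=d\in\BbN$ and $d_{1,i}\in\{0,1/d,\dots,(d-1)/d\}$. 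Both proofs rest on the same two ingredients --- one neighbour rules out overlapping cylinders, and attractor equal to $[0,1]$ rules out gaps --- but yours trades the paper's structural observation that $\beta$ is $\fS_{1}$-invariant (which the paper reuses in the remark following the proposition) and its terse ``consideration of the geometry of neighbours'' for an explicit tiling-plus-arithmetic computation, which is arguably more self-contained. The step you rightly flag as the crux, that a cylinder meeting the interior of a net interval must contain it, is sound here: since $K_{1}=[0,1]$, every interval between consecutive cylinder endpoints is a net interval, so a non-endpoint in the overlap of two distinct cylinders lies in a net interval with at least two neighbours, contradicting the commuting hypothesis.
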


\begin{proof}
Suppose the sink characteristic vector is $\beta $. Observe that the action
of $\fS_{1}$ must send $\beta $ to itself, for if it mapped $\beta $ to a
different reduced characteristic vector, say $\gamma _{1}$, then the
admissible path that links $\beta $ to itself in $k$ iterations of $\fS_{1}$
, say $\beta ,\gamma _{1},\dots,\gamma _{k-2},\beta $, would link $\gamma
_{2}$ to $\gamma _{1}$ in $k$ steps and that contradicts the definition of a
sink. So we can assume $\beta $ is mapped to $d$ (reduced) characteristic
vectors $\beta $ under any (one) contraction in $\fS_{1}$ and to no other
characteristic vector. We must have $d\geq 2$ since net interval lengths
eventually decrease.

Let $\Delta $ be a level $n$ $\omega $-net interval with characteristic
vector $\beta $. The length of $\Delta $ is $cr^{n}$ for some constant $c$,
where $r$ is the common contraction factor of the RIFS. Each child of $%
\Delta $ also has characteristic vector $\beta $ and hence has length $%
cr^{n+1}$. As there are $d$ children we must have $r=1/d$.

As the RIFS is commuting, $\beta $ can have only one neighbour. The same is
true for all its children, with even the same (normalized) value. Suppose $%
S_{\omega ,\sigma }([0,1])$ is the only set covering $\Delta $ with $\sigma $
of length $n$. The children of $\Delta $ are generated by the sets $%
S_{\omega 1,\sigma j}([0,1])$ where $j\in \{0,\dots,\mathcal{N}(1)\}$.
Consideration of the geometry of neighbours shows that these sets must not
overlap, but rather, be precisely adjacent. That proves the maps $S_{1,j}$
are as claimed.
\end{proof}

\begin{remark}
When the sink has the form $(j,j,\dots,j)$, then, as observed in the proof
above, the sink characteristic vector can only map to itself under $\fS_{j}$%
. Consideration of the geometry shows that it must be the characteristic
vector of the initial net interval. Since the initial characteristic vector
has every characteristic vector as a descendant, the essential class must be
everything.
\end{remark}

\section{Examples}

\label{sec:6}

\subsection{Detailed analysis of a commuting RIFS\label{CommExAnal}}

In this subsection we will consider a specific example of a commuting RIFS
and find the minimum and maximum values for the local dimensions of the
associated self-similar measures.

\subsubsection{Set up of the example}

Consider the two iterated function systems:

\begin{enumerate}
\item $\fS_{1}$ with $S_{1,1}(x)=x/3$, $S_{1,2}(x)=x/3+1/6$, $%
S_{1,3}(x)=x/3+2/3$ and $p_{1,1}=p_{1,3}=1/6$, $p_{1,2}=2/3;$

\item $\fS_{2}$ with $S_{2,1}(x)=x/3$, $S_{2,2}(x)=x/3+1/9$, $%
S_{2,3}(x)=x/3+1/3$, $S_{2,4}(x)=x/3+2/3$ and $p_{2,1}=p_{2,2}=p_{2,4}=1/6$, 
$p_{2,3}=1/2$.
\end{enumerate}

This RIFS $\fS=(\{\fS_{1},\fS_{2}\},\mathbb{P})$, for any choice of
probability measure $\mathbb{P=(}\theta ,1-\theta )$, is commuting being a
special case of Proposition \ref{ExCommuting}. There are 5 reduced
characteristic vectors: 
\begin{align*}
CV_{1} &=(1,(0))\text{; } & CV_{2}&=(2/3,(1/3))\text{; } & 
CV_{3}=(2/3,(0,1/3)) \text{; } \\
CV_{4} &=(1/3,(0))\text{; } & CV_{5}&=(1/3,(0,2/3)). & 
\end{align*}

Figure \ref{fig:Pic0} shows the parent/child relationships under $\fS_{1}$
and Figure \ref{fig:Pic1} shows this under $\fS_{2}$. It can be seen from
these graphs that the sink is the word $(1)$ and the sink characteristic
vector is $CV_{1}$. The transition matrices under the action of $\fS_{1}$
are given by:

\begin{itemize}
\item $CV_{1}\rightarrow CV_{1},CV_{1},CV_{1}$, 
\begin{equation*}
\begin{bmatrix}
1/6%
\end{bmatrix}%
, \hspace{0.2 in} 
\begin{bmatrix}
2/3%
\end{bmatrix}%
, \hspace{0.2 in} 
\begin{bmatrix}
1/6%
\end{bmatrix}%
,
\end{equation*}

\item $CV_{2}\rightarrow CV_{1},CV_{1},$ 
\begin{equation*}
\begin{bmatrix}
2/3%
\end{bmatrix}%
, \hspace{0.2 in} 
\begin{bmatrix}
1/6%
\end{bmatrix}%
,
\end{equation*}

\item $CV_{3}\rightarrow CV_{1},CV_{1},$ 
\begin{equation*}
\begin{bmatrix}
2/3 \\ 
1/6%
\end{bmatrix}%
, \hspace{0.2 in} 
\begin{bmatrix}
1/6 \\ 
2/3%
\end{bmatrix}%
,
\end{equation*}

\item $CV_{4}\rightarrow CV_{1}$, 
\begin{equation*}
\begin{bmatrix}
1/6%
\end{bmatrix}%
,
\end{equation*}

\item $CV_{5}\rightarrow CV_{1}$, 
\begin{equation*}
\begin{bmatrix}
1/6 \\ 
1/6%
\end{bmatrix}%
.
\end{equation*}
\end{itemize}

\begin{figure}[tbp]
\includegraphics[scale=0.5]{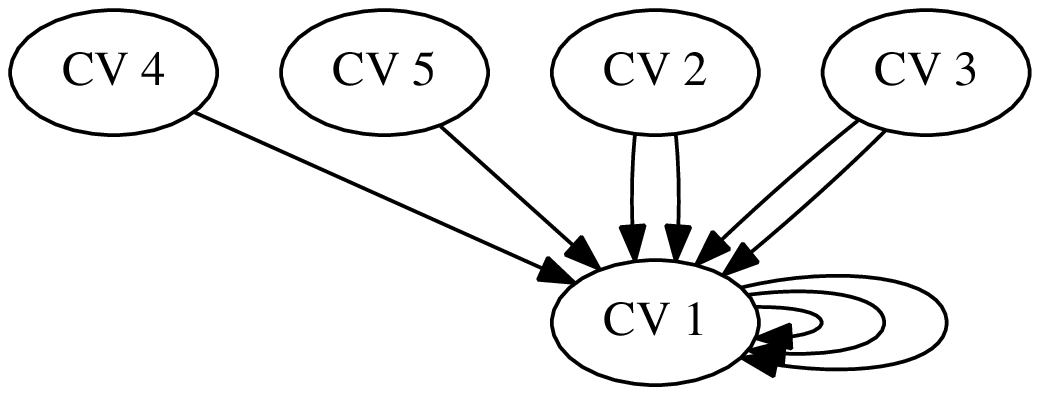}
\caption{$\fS_1$}
\label{fig:Pic0}
\end{figure}
Under $\fS_{2}$ the transition matrices are:

\begin{itemize}
\item $CV_{1}\rightarrow CV_{4},CV_{3},CV_{5},CV_{2},CV_{1},$ 
\begin{equation*}
\begin{bmatrix}
1/6%
\end{bmatrix}%
, \hspace{0.2 in} 
\begin{bmatrix}
1/6 & 1/6%
\end{bmatrix}%
, \hspace{0.2 in} 
\begin{bmatrix}
1/6 & 1/2%
\end{bmatrix}%
, \hspace{0.2 in} 
\begin{bmatrix}
1/2%
\end{bmatrix}%
, \hspace{0.2 in} 
\begin{bmatrix}
1/6%
\end{bmatrix}%
,
\end{equation*}

\item $CV_{2}\rightarrow CV_{5},CV_{2},CV_{1},$ 
\begin{equation*}
\begin{bmatrix}
1/6 & 1/2%
\end{bmatrix}
\hspace{0.2 in} 
\begin{bmatrix}
1/2%
\end{bmatrix}%
, \hspace{0.2 in} 
\begin{bmatrix}
1/6%
\end{bmatrix}%
,
\end{equation*}

\item $CV_{3}\rightarrow CV_{5},CV_{3},CV_{5},CV_{2},$ 
\begin{equation*}
\begin{bmatrix}
1/6 & 1/2 \\ 
0 & 1/6%
\end{bmatrix}%
, \hspace{0.2 in} 
\begin{bmatrix}
1/2 & 0 \\ 
1/6 & 1/6%
\end{bmatrix}%
, \hspace{0.2 in} 
\begin{bmatrix}
0 & 1/6 \\ 
1/6 & 1/2%
\end{bmatrix}%
, \hspace{0.2 in} 
\begin{bmatrix}
1/6 \\ 
1/2%
\end{bmatrix}%
,
\end{equation*}

\item $CV_{4}\rightarrow CV_{4},CV_{3},$ 
\begin{equation*}
\begin{bmatrix}
1/6%
\end{bmatrix}%
, \hspace{0.2 in} 
\begin{bmatrix}
1/6 & 1/6%
\end{bmatrix}%
,
\end{equation*}

\item $CV_{5}\rightarrow CV_{4},CV_{3},$ 
\begin{equation*}
\begin{bmatrix}
1/6 \\ 
1/6%
\end{bmatrix}%
, \hspace{0.2 in} 
\begin{bmatrix}
1/6 & 0 \\ 
1/6 & 1/6%
\end{bmatrix}%
.
\end{equation*}
\end{itemize}

\begin{figure}[tbp]
\includegraphics[scale=0.5]{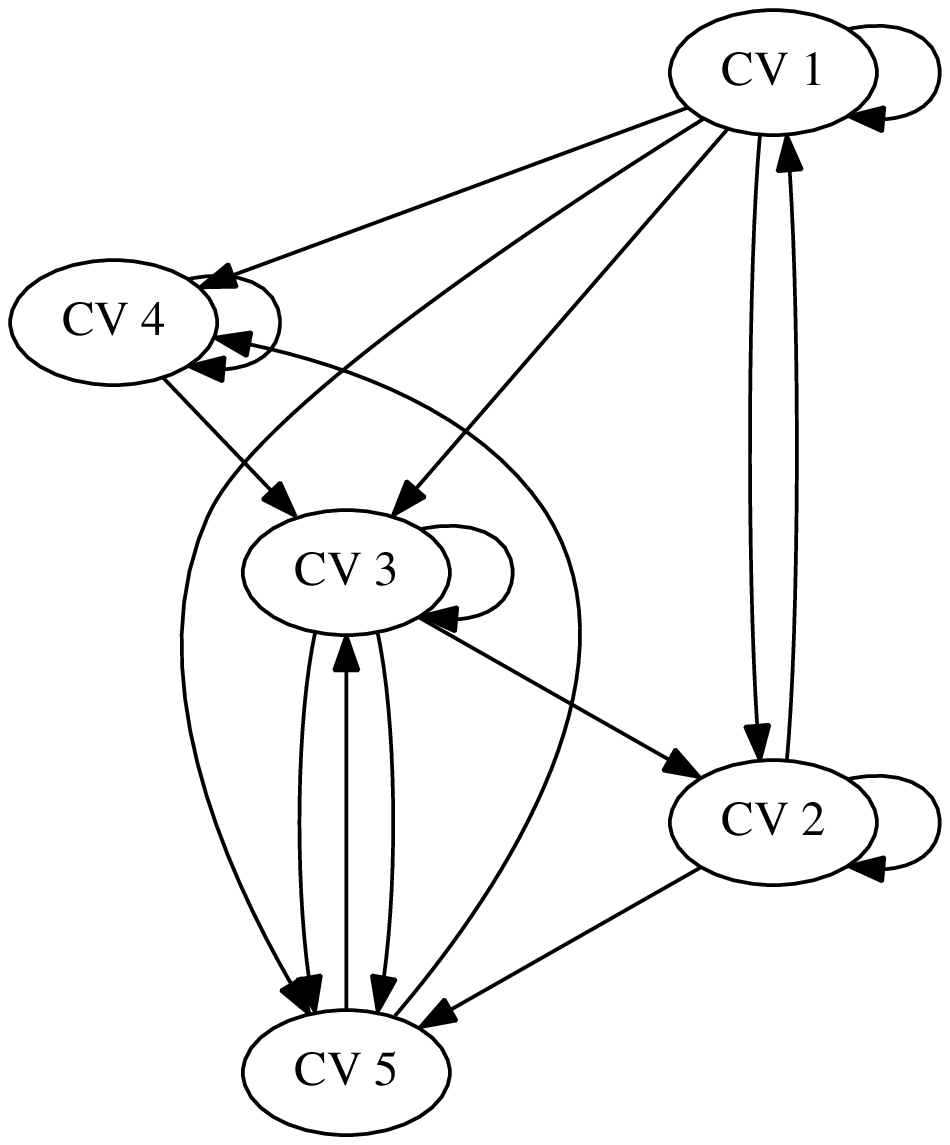}
\caption{$\fS_2$}
\label{fig:Pic1}
\end{figure}

\subsubsection{Dimensional analysis}

In order to find bounds on the upper and lower local dimensions for the
associated self-similar measures, we will need to study the transition
matrices from one neck level to the next. (Of course, these matrix products
are positive scalars and hence coincide with their spectral radius.) These
arise from paths of varying lengths, which we refer to as a neck length. If
the neck length is $1$, the corresponding transition matrix is either the $%
1\times 1$ matrix $[1/6]$ or $[2/3]$; these come from the maps $%
CV_{1}\rightarrow CV_{1}$ under $\fS_{1}$, taking either the first or third
(non-reduced) characteristic vector, or the second, respectively.

More generally, we claim that the minimal spectral radius of a block
transition matrix corresponding to a neck of length $n$ is $6^{-n}$. We can
achieve this spectral radius by taking the path $CV_{1}\rightarrow
_{2}CV_{1}\rightarrow _{2}CV_{1}\dots CV_{1}\rightarrow _{1}CV_{1}$. (Here
the notation $CV_{1}\rightarrow _{2}CV_{1}$, for example, means that we
transition from the parent $CV_{1}$ to the child $CV_{1}$ using $\fS_{2}$.)
To see that this spectral radius is in fact minimal, simply observe that the
minimal row sum of all matrices in both $\fS_{1}$ and $\fS_{2}$ is bounded
below by $1/6$.

Next, we claim that the maximal spectral radius of a block transition matrix
corresponding to a neck of length $n$ is equal to $2^{2-n}/3$. We can
achieve this spectral radius by the path $CV_{1}\rightarrow
_{2}CV_{2}\rightarrow _{2}CV_{2}\rightarrow _{2}CV_{2}\dots
CV_{2}\rightarrow _{1}CV_{1}$.

To see that this spectral radius is maximal is much more involved than to
derive the minimum, and requires us to adapt techniques used to compute the
joint spectral radius of a set of matrices. See \cite{GuglielmiProtasov13}
for example.

We first claim that $2^{1-n}$ is the maximal spectral radius for any
(admissible) product of $n-1$ transition matrices using $\fS_{2}$. We will
write $T_{i,j}^{(2)}$ for the primitive transition matrix $%
T_{2}(CV_{i},CV_{j})$ and let $\widetilde{T}_{i,j}^{(2)}=2T_{i,j}^{(2)}$,
the `normalized' transition matrix. The claim is equivalent to the statement
that the spectral radius of a product of $n-1$ normalized transition
matrices, $\widetilde{T}_{i,j}^{(2)}$, is bounded. We can prove this by
establishing that there exist convex compact sets $K_{i}\subseteq \mathbb{R}$
for $i=1,2,4$ and $K_{3},K_{5}\subseteq \mathbb{R}^{2}$ such that $v_{i}%
\widetilde{T}_{i,j}^{(2)}\in K_{j}$ for all $v_{i}\in K_{i}$ and all
admissible pairs $(i,j)$. For this, we take%
\begin{equation*}
K_{1}=K_{2}=K_{4}=[0,1],
\end{equation*}
\begin{equation*}
K_{3}=\mathrm{hull}\left( [0,0],\left[0,\frac{1}{3}\right],[1,0],\left[\frac{%
1}{2},\frac{1}{3}\right]\right)
\end{equation*}
and 
\begin{equation*}
K_{5}=\mathrm{hull}\left( [0,0],\left[0,\frac{1}{3}\right],\left[\frac{1}{3}%
,0\right],\left[\frac{1}{3} ,1\right]\right) .
\end{equation*}
Note, for example, that 
\begin{equation*}
v_{3}\widetilde{T}_{3,5}^{(2)}=v_{3}%
\begin{bmatrix}
1/3 & 1 \\ 
0 & 1/3%
\end{bmatrix}%
\in \left\{ [0,0],\left[0,\frac{1}{9}\right],\left[\frac{1}{3},1\right],%
\left[\frac{1}{6},\frac{ 11}{18}\right]\right\} \subseteq K_{5}
\end{equation*}%
whenever $v_{3}$ is an extreme point of $K_{3}$ and by linearity this is
enough to show $v_{3}\widetilde{T}_{3,5}^{(2)}\in K_{5}$ whenever $v_{3}\in
K_{3}$. We leave the verification of the other cases to the interested
reader.

Any block transition matrix corresponding to a neck of length $n$, $B_{n}$,
will be of the form $T_{1,i_{2}}^{(2)}T_{i_{2},i_{3}}^{(2)}\dots
T_{i_{n-2},i_{n-1}}^{(2)}T_{i_{n-1},1}^{(1)}$ for suitable indices $%
i_{2},\dots,i_{n-1}$. Our previous remarks imply that $%
T_{1,i_{2}}^{(2)}T_{i_{2},i_{3}}^{(2)}\dots T_{i_{n-2},i_{n-1}}^{(2)}\in
2^{1-n}K_{i_{n-1}}$. Moreover, it can be checked that $K_{i}T_{i,1}^{(1)}=%
\begin{bmatrix}
c_{i}%
\end{bmatrix}%
$ where $0\leq c_{i}\leq 2/3$. Thus $B_{n}$ has spectral radius bounded by $%
2^{2-n}/3$.

With these facts and Theorem \ref{CommutingThm} we can determine the almost
sure interval of local dimensions. The maximal local dimension is
independent of the choice of $\mathbb{P}$. It comes from paths giving rise
to commuting blocks of length $n$ of spectral radius $\left( 1/6\right) ^{n}$
and therefore has value 
\begin{equation*}
\frac{\log 1/6}{\log 1/3}=1+\frac{\log 2}{\log 3}.
\end{equation*}

The minimal local dimension of the self-similar measure $\mu _{\omega }$
depends on $\mathbb{P}$. In the notation of Theorem \ref{CommutingThm}, the
minimum value is almost surely%
\begin{equation*}
\frac{\E\left( \log \overline{\mathbf{B}}_{1}\right) }{\E(N_{1})\log r}.
\end{equation*}%
Given $\theta =$ the probability of choosing $\fS_{1}$, the probability of a
neck of length $n$ is $\theta (1-\theta )^{n-1}.$ Any corresponding block
transition matrix $B_{n}$ has maximal spectral radius $2^{2-n}/3$. Thus 
\begin{equation*}
\E(N_{1})=\sum_{n=1}^{\infty }n\theta (1-\theta )^{n}
\end{equation*}%
and 
\begin{equation*}
\E\left( \log \overline{\mathbf{B}}_{1}\right) =\sum_{n=1}^{\infty }\theta
(1-\theta )^{n}\log \left( \frac{2^{2-n}}{3}\right) .
\end{equation*}%
Hence the minimal local dimension is equal to%
\begin{equation*}
\frac{\sum_{n=1}^{\infty }\theta (1-\theta )^{n}\log \left( \frac{2^{2-n}}{3}%
\right) }{\sum_{n=1}^{\infty }n\theta (1-\theta )^{n}\log 1/3}=\frac{\theta
\left( \log 3/4\right) +\log 2}{\log 3}\quad \text{ a.s.}
\end{equation*}%
For instance, if $\theta =1/2$, the minimum local dimension is almost surely 
$1/2$. As $\theta \rightarrow 0$, the minimum local dimension tends to $\log
2/\log 3$ and as $\theta \rightarrow 1$, it tends to $1-\log 2/\log 3$.

\subsection{Biased random Bernoulli convolution example}

In this subsection we will study the RIFS $\fS=\{\fS_{1},\dots,\fS_{m}\}$,
chosen with probabilities $\mathcal{P}=\{\theta _{j}\}$, where each $\theta
_{j}>0$ and each $\fS_{j}$ consists of the two contractions $S_{j,0}(x)=rx$, 
$S_{j,1}(x)=rx+1-r$ and probabilities $p_{j,0}=p_{j}$, $p_{j,1}=1-p_{j}$. We
will assume $r$ is the inverse of a simple Pisot number. Recall $r$ is a
simple Pisot number if it is the positive real root greater than $1$ of $%
r^{\ell} - r^{\ell -1} - \dots - r - 1$ for some $\ell \geq 2$. An example
is the golden mean.

In the case of a single IFS, $\fS_{1}$, it was shown in \cite{HHN} that if $%
p_{1}<1-p_{1}$, then the set of local dimensions of the associated
self-similar measure had an isolated point at $x=0$. Here we show a similar
result: If $p_i \leq 1-p_i$ for all $i$ and $p_{j}<1-p_{j}$ for some $j$,
then $\{\dimlc\mu _{\omega}(x):x\in \lbrack 0,1]\}$ admits an isolated point
at $x=0$ for a.a.\ $\omega $.

To see this, we argue as follows. First, note it is easy to see that for
a.a. $\omega $, 
\begin{equation*}
\dimlc\mu _{\omega }(0)=\lim_{n\rightarrow \infty }\frac{\log p_{\omega
_{1}}\cdot \cdot \cdot p_{\omega _{n}}}{n\log r}=\frac{\sum_{j=1}^{m}\theta
_{j}\log p_{j}}{\log r}.
\end{equation*}
The same proof as given in \cite[Lemma 4.2]{HHN} shows that there is an
integer $N$ such that each $x\in (0,1)$ has $\omega $-symbolic
representation $[x]=(\gamma ,\eta _{1},\eta _{2},\dots)$, where $\gamma $ is
an initial path, $\eta _{j}$ are essential $\omega $-paths of length at most 
$N$ and 
\begin{eqnarray*}
\left\Vert T_{\omega }(\eta _{2j-1},\eta _{2j},\eta _{2j+1,1})\right\Vert _{ 
\text{row,}\min } &\geq &p_{\omega _{K_{j-1}+1}}\cdot \cdot \cdot p_{\omega
_{K_{j}-1}}(1-p_{\omega _{K_{j}}}) \\
&=&\prod_{i=1}^{m}p_{i}^{s(i,j)}\prod_{i=1}^{m}(1-p_{i})^{t(i,j)}.
\end{eqnarray*}
Here $K_{j}=\sum_{i=1}^{j}L_{i}$ with $L_{i}=$ length $\eta _{2i-1}+$length $%
\eta _{2i}$, 
\begin{equation*}
s(i,j)=\card\{\ell :K_{j-1}<\ell <K_{j}\text{ and }\omega _{\ell }=i\}
\end{equation*}
and $t(i,j)=1$ if $\omega _{K_{j}}=i$ and $0$ else. Put $S_{J}(i)=
\sum_{j=1}^{J}s(i,j)$ and $T_{J}(i)=\sum_{j=1}^{J}t(i,j)$, so 
\begin{equation*}
S_{J}(i)=\card\{\ell :1\leq \ell \leq K_{J}\text{, }\ell \neq K_{j}\text{
for }j=1,\dots,J\text{ and }\omega _{\ell }=i\}
\end{equation*}
and $T_{J}(i)=\card\{K_{j}:j=1,\dots,J$ and $\omega _{K_{j}}=i\}$. Thus 
\begin{eqnarray*}
\frac{\log \left\Vert T_{\omega }(\eta _{1},\dots,\eta _{2J+1,1})\right\Vert
_{ \text{row,}\min }}{\sum_{i=1}^{J}L_{i}} &\geq &\frac{\log
\prod_{i=1}^{m}p_{i}^{\sum_{j=1}^{J}s(i,j)}(1-p_{i})^{\sum_{j=1}^{J}t(i,j)}}{
\sum_{i=1}^{J}L_{i}} \\
&\geq &\frac{\sum_{i=1}^{m}S_{J}(i)\log p_{i}+T_{J}(i)\log (1-p_{i})}{
\sum_{i=1}^{J}L_{i}}.
\end{eqnarray*}
For a.a.\ $\omega $, $S_{J}(i)/(K_{J}-J)\rightarrow \theta _{i}$ and $%
T_{J}(i)/J\rightarrow \theta _{i}$ as $J\rightarrow \infty $, thus 
\begin{eqnarray*}
&&\underline{\lim }_{J}\frac{\sum_{i=1}^{m}S_{J}(i)\log p_{i}+T_{J}(i)\log
(1-p_{i})}{\sum_{i=1}^{J}L_{i}} \\
&=&\sum_{i=1}^{m}\theta _{i}\left( \log p_{i}\underline{\lim }_{J}\frac{
K_{J}-J}{K_{J}}+\log (1-p_{i})\underline{\lim }_{J}\frac{J}{K_{J}}\right) \\
&=&\sum_{i=1}^{m}\theta _{i}\log p_{i}+\sum_{i=1}^{m}\theta _{i}\left( \log
(1-p_{i})+\log p_{i}\right) \underline{\lim }_{J}\frac{J}{K_{J}}.
\end{eqnarray*}
As $L_{i}\leq 2N$, it follows that $\underline{\lim }_{J}J/K_{J}\geq 1/2N$
and therefore for any $x\in (0,1)$ and a.a.\ $\omega $, 
\begin{eqnarray*}
\dimulc\mu _{\omega }(x) &=&\overline{\lim }_{J}\frac{\log \left\Vert
T_{\omega }(\eta _{1},\dots,\eta _{2J+1,1})\right\Vert _{\text{row,}\min }}{
K_{J}\log r} \\
&\leq &\frac{\sum_{i=1}^{m}\theta _{i}\log p_{i}}{\log r}+\frac{
\sum_{i=1}^{m}\theta _{i}(\log (1-p_{i})+\log p_{i})}{2N\log r}.
\end{eqnarray*}
If some $p_{i}\neq 1-p_{i}$, this is bounded (below) away from $\dimlc\mu
_{\omega }(0)$.

\subsection{Dimension of the essential class example}

Consider the two iterated function systems

\begin{enumerate}
\item $\fS_1$ with $S_{1,1}(x) = x/4$, $S_{1,2}(x) = x/4+1/6$, $S_{1,3}(x) =
x/4+7/12$ and $S_{1,4}(x) = x/4+3/4$.

\item $\fS_{2}$ with $S_{2,1}(x)=x/4$, $S_{2,2}(x)=x/4+1/18$, $%
S_{2,3}(x)=x/4+25/36$, $S_{2,4}(x)=x/4+3/4$,
\end{enumerate}

One can see from \cite{NW} and Proposition \ref{prop:4.6} that the RIFS $\{%
\fS_{1},\fS_{1}\}$ is of finite type. Using the computer, we have determined
that there are 492 reduced characteristic vectors, but only one reduced
characteristic vector in the essential class. This is the vector 
\begin{equation*}
(1/9,(0,1/9,2/9,3/9,4/9,5/9,6/9,7/9,8/9)).
\end{equation*}%
This vector maps to four copies of itself and hence Proposition \ref{essdim}
implies that the dimension of the RIFS is equal to $1$ a.s. In fact in this
case, the result is stronger than this. The dimension of the RIFS is equal
to $1$ for all choices of $\omega \in \Omega$. It is worth observing that $%
K_\omega$ does not have full support for any $\omega \in \Omega$.

It is interesting to note that if we consider the IFS $\fS_{1}$ alone, there
are 11 reduced characteristic vectors, and one reduced characteristic vector
in the essential class, namely $(1/3,(0,1/3,2/3))$. Similarly, $\fS_{2}$
(alone), has 117 reduced characteristic vectors and one reduced essential
characteristic vector, the same vector as for the RIFS. It can be shown that
the dimension of both $K_{1}$ and $K_{2}$ are also one.

\end{document}